\def\R{\textrm{I\kern-0.21emR}}
\def\N{\textrm{I\kern-0.21emN}}
\renewcommand{\leq}{\leqslant}
\renewcommand{\geq}{\geqslant}
\renewcommand{\ker}{\mathrm{Null}}
\newcommand{\Img}{\mathrm{Range}}
\newtheorem{proposition}{Proposition}
\newtheorem{problem}{Problem}
\newtheorem{definition}{Definition}
\newtheorem{lemma}{Lemma}
\newtheorem{theorem}{Theorem}
\newtheorem{corollary}{Corollary}
\theoremstyle{definition}\newtheorem{remark}{Remark}
\theoremstyle{definition}\newtheorem{example}{Example}
\title{Shape deformation analysis from the optimal control viewpoint}
\date{}
\author{Sylvain Arguill\`ere\footnote{Sorbonne Universit\'es, UPMC Univ Paris 06, CNRS UMR 7598, Laboratoire Jacques-Louis Lions, F-75005, Paris, France ({\tt sylvain.arguillere@upmc.fr}).}
\and
Emmanuel Tr\'elat\footnote{Sorbonne Universit\'es, UPMC Univ Paris 06, CNRS UMR 7598, Laboratoire Jacques-Louis Lions, Institut Universitaire de France and Team GECO Inria Saclay, F-75005, Paris, France (\texttt{emmanuel.trelat@upmc.fr}).}
\and
Alain Trouv\'e\footnote{Ecole Normale Sup\'erieure de Cachan, Centre de Math\'ematiques et Leurs Applications, CMLA, 61 av. du Pdt Wilson, F-94235 Cachan Cedex, France ({\tt trouve@cmla.ens-cachan.fr}).}
\and
Laurent Younes\footnote{Johns Hopkins University, Center for Imaging Science, Department of Applied Mathematics and Statistics, Clark 324C, 
3400 N. Charles st. Baltimore, MD 21218, USA ({\tt laurent.younes@jhu.edu}).}
}
\begin{document}

\maketitle

\begin{abstract}
A crucial problem in shape deformation analysis is to determine a deformation of a given shape into another one, which is optimal for a certain cost. It has a number of applications in particular in medical imaging.

In this article we provide a new general approach to shape deformation analysis, within the framework of optimal control theory, in which a deformation is represented as the flow of diffeomorphisms generated by time-dependent vector fields. Using reproducing kernel Hilbert spaces of vector fields,
the general shape deformation analysis problem is specified as an infinite-dimensional optimal control problem with state and control constraints. In this problem, the states are diffeomorphisms and the controls are vector fields, both of them being subject to some constraints. The functional to be minimized is the sum of a first term defined as geometric norm of the control (kinetic energy of the deformation) and of a data attachment term providing a geometric distance to the target shape.

This point of view has several advantages. First, it allows one to model general constrained shape analysis problems, which opens new issues in this field. Second, using an extension of the Pontryagin maximum principle, one can characterize the optimal solutions of the shape deformation problem in a very general way as the solutions of constrained geodesic equations.
Finally, recasting general algorithms of optimal control into shape analysis yields new efficient numerical methods in shape deformation analysis.
Overall, the optimal control point of view unifies and generalizes different theoretical and numerical approaches to shape deformation problems, and also allows us to design new approaches.

The optimal control problems that result from this construction are infinite dimensional and involve some constraints, and thus are nonstandard. In this article we also provide a rigorous and complete analysis of the infinite-dimensional shape space problem with constraints and of its finite-dimensional approximations.
\end{abstract}

\noindent\textbf{Keywords:} shape deformation analysis, optimal control, reproducing kernel Hilbert spaces, Pontryagin maximum principle, geodesic equations.

\medskip

\noindent\textbf{AMS classification:}
% 58Exx % Variational problems in infinite-dimensional spaces
58E99 % None of the above, but in MSC2010 section 58Exx
49Q10 % Optimization of shapes other than minimal surfaces
46E22 % Hilbert spaces with reproducing kernels
% 47B32 % Operators in reproducing-kernel Hilbert spaces
49J15 % Optimal control problems involving ordinary differential equations
% 32G10 % Deformations of submanifolds and subspaces
62H35 % Image analysis
%53C17 % Sub-Riemannian geometry
53C22 % Geodesics
% 58Dxx % Spaces and manifolds of mappings
58D05 % Groups of diffeomorphisms and homeomorphisms as manifolds

\newpage
\tableofcontents
\newpage

\section{Introduction}

The mathematical analysis of shapes has become a subject of growing interest in the past few decades, and has motivated the development of efficient image acquisition and segmentation methods, with applications to many domains, including computational anatomy and object recognition.

The general purpose of shape analysis is to compare two (or more) shapes in a way that takes into account their geometric properties. Two shapes can be very similar from a human's point of view, like a circle and an ellipse, but very different from a computer's automated perspective. In
\textit{Shape Deformation Analysis}, one optimizes a deformation mapping one shape onto the other and bases the analysis on its properties. This of course implies that a cost has been assigned to every possible deformation of a shape, the design of this cost function being a crucial step in the method. This approach has been used extensively in the analysis of anatomical organs from medical images (see \cite{GM}).

In this framework, a powerful and convenient approach represents deformations as flows of diffeomorphisms generated by time-dependent vector fields \cite{DGM,T1,T2}. Indeed, when considering the studied shapes as embedded in a real vector space $\R^d$, deformations of the whole space, like diffeomorphisms, induce deformations of the shape itself. 
The set of all possible deformations is then defined as the set of flows of time-dependent vector fields of a Hilbert space $V$, called space of "infinitesimal transformations", which is a subset of the space of all smooth bounded vector fields on $\R^d$.

This point of view has several interesting features, not the least of which being that the space of possible deformations is a well-defined subgroup of the group of diffeomorphisms, equipped with a structure similar to the one of a right-invariant sub-Riemannian metric \cite{SRBOOK,MBOOK}. This framework has led to the development of a family of registration algorithms called \textit{Large Deformation Diffeomorphic Metric Mapping} (LDDMM), in which the correspondence between two shapes comes from the minimization of an objective functional defined as a sum of two terms \cite{AG,BMTY,JM,MTY1,MTY2}. The first term takes into account the cost of the deformation, defined as the integral of the squared norm of the time-dependent vector field from which it arises. In a way, it is the total kinetic energy of the deformation. The second term is a data attachment penalizing the difference between the deformed shape and a target.

An appropriate class of Hilbert spaces of vector fields for $V$ is the one of \textit{reproducing kernel Hilbert spaces} (in short, RKHS) \cite{RK}, because they provide very simple solutions to the spline interpolation problem when the shape is given by a set of landmarks \cite{TY2,YBOOK}, which is an important special case since it includes most practical situations after discretization.
This framework allows one to use tools from Riemannian geometry \cite{TY2}, along with classical results from the theory of Lie groups equipped with right-invariant metrics \cite{Arnold,ABOOK,HMR,MRT,YBOOK}. 
These existing approaches provide an account for some of the geometric information in the shape, like singularities for example. However, they do not consider other intrinsic properties of the studied shape, which can also depend on the nature of the object represented by the shape. For example, for landmarks representing articulations of a robotic arm, the deformation can be searched so as to preserve the distance between certain landmarks. For cardiac motions, it may be relevant to consider deformations of the shape assuming that the movement only comes from a force applied only along the fiber structure of the muscle. In other words, it may be interesting to constrain the possible deformations (by considering non-holonomic constraints) in order to better fit the model.

In order to take into account such constraints in shape deformation problems, we propose to model these problems within the framework of \textit{optimal control theory}, where the control system would model the evolution of the deformation and the control would be the time-dependent vector field (see preliminary ideas in \cite{TY2}).

The purpose of this paper is to develop the point of view of \textit{optimal control for shape deformation analysis} as comprehensively as possible. We will show the relevance of this framework, in particular because it can be used to model constrained shapes among many other applications. 

Indeed, a lot of tools have been developed in control theory for solving optimal control problems with or without constraints. The well-known \textit{Pontryagin maximum principle} (in short PMP, see \cite{PBOOK}) provides first-order conditions for optimality in the form of Hamiltonian extremal equations with a maximization condition permitting the computation of the optimal control. It has been generalized in many ways, and a large number of variants or improvements have been made over the past decades, with particular efforts in order to be able to address optimal control problems involving general state/control constraints (see the survey article \cite{HSV} and the many references therein). The analysis is, however, mainly done in finite dimension.
%A different kind of constraints has been studied in \cite{Y1}, also using results from optimal control theory. The constraints from \cite{Y1} can be made to coincide with the ones proposed in this paper (and conversely), but the point of view adopted here is very different.
Since shape analysis has a natural setting in infinite dimension (indeed, in 2D, the shape space is typically a space of smooth curves in $\R^2$), we need to derive an appropriate infinite-dimensional variant of the PMP for constrained problems. Such a variant is nontrivial and nonstandard, given that our constrained shape analysis problems generally involve an infinite number of equality constraints.

Such a PMP will allow us to derive in a rigorous geometric setting the (constrained) geodesic equations that must be satisfied by the optimal deformations.

Moreover, modeling shape deformation problems within the framework of optimal control theory can inherit from the many numerical methods in optimal control and thus lead to new algorithms in shape analysis.

\medskip

The paper is organized as follows. 

Section \ref{sec2} is devoted to modeling shape deformation problems with optimal control.
We first briefly describe, in Section \ref{sec2.1}, the framework of diffeomorphic deformations arising from the integration of time-dependent vector fields belonging to a given RKHS, and recall some properties of RKHS's of vector fields.
In Section \ref{modshsp} we introduce the action of diffeomorphisms on a shape space, and we model and define the optimal control problem on diffeomorphisms which is at the heart of the present study, where the control system stands for the evolving deformation and the minimization runs over all possible time-dependent vector fields attached to a given RKHS and satisfying some constraints.
We prove that, under weak assumptions, this problem is well posed and has at least one solution (Theorem \ref{existence}).
Since the RKHS is in general only known through its kernel, we then provide a kernel formulation of the optimal control problem and we analyze the equivalence between both problems.
In Section \ref{mlmk} we investigate in our framework two important variants of shape spaces, which are lifted shapes and multi-shapes.
Section \ref{approx} is devoted to the study of finite-dimensional approximations of the optimal control problem.
Section \ref{sec_proof_thm_existence} contains a proof of Theorem \ref{existence}.

Section \ref{secgeod} is dedicated to the derivation of the constrained geodesic equations in shape spaces, that must be satisfied by optimal deformations.
We first establish in Section \ref{PMPs} an infinite dimensional variant of the PMP which is adapted to our setting (Theorem \ref{PMP}). As an application, we derive in Section \ref{geodeqshsp} the geodesic equations in shape spaces (Theorem \ref{geodeq}), in a geometric setting, and show that they can be written as a Hamiltonian system.

In Section \ref{sec_algos}, we design some algorithms in order to solve the optimal control problem modeling the shape deformation problem.
Problems without constraints are first analyzed in Section \ref{unconsmin}, and we recover some already known algorithms used in unconstrained shape spaces, however with a more general point of view. We are thus able to extend and generalize existing methods. Problems with constraints are investigated in Section \ref{matchconst} in view of solving constrained matching problems. We analyze in particular the augmented Lagrangian algorithm, and we also design a method based on shooting.

In Section \ref{sec7} we provide numerical examples, investigating first a matching problem with constant total volume, and then a multishape matching problem.

\section{Modelling shape deformation problems with optimal control}\label{sec2} 
The following notations will be used throughout the paper. Let $d\in \N$ fixed. A vector $a\in\R^d$ can be as well viewed as a column matrix of length $d$. The Euclidean norm of $a$ is denoted by $\vert a\vert$. The inner product $a\cdot b$ between two vectors $a,b\in\R^d$ can as well be written, with matrix notations, as $a^Tb$, where $a^T$ is the transpose of $a$. In particular one has $\vert a\vert^2=a\cdot a=a^Ta$.

Let $X$ be a Banach space. The norm on $X$ is denoted by $\Vert\cdot\Vert_X$, and the inner product by $(\cdot,\cdot)_X$ whenever $X$ is a Hilbert space. The topological dual $X^*$ of $X$ is defined as the set of all linear continuous mappings $p:X\rightarrow \R$. Endowed with the usual dual norm $\Vert p\Vert_{X^*}=\sup \{ p(x) \ \vert\ x\in X,\ \Vert x\Vert_X=1\}$, it is a Banach space.
For $p\in X^*$, the natural pairing between $p$ and $w\in X$ is $p(w)=\langle p, w\rangle_{X^*,X}$, with the duality bracket.
If $X=\R^n$ then $p$ can be identified with a column vector through the equality $p(w)=p^Tw$.

Let $M$ be an open subset of $X$, and let $Y$ be another Banach space. The Fr\'echet derivative of a map $f:M\rightarrow Y$ at a point $q\in M$ is written as $df_q$. When it is applied to a vector $w$, it is denoted by $df_q. w$ or $df_q(w)$. When $Y=\R$, we may also write $\langle df_q, w\rangle_{X^*,X}$.

We denote by $W^{1,p}(0,1;M)$ (resp. $H^{1}(0,1;M)$) the usual Sobolev space of elements of $L^p(0,1;M)$, with $1\leq p\leq+\infty$  (resp., with $p=2$) having a weak derivative in $L^p(0,1;X)$. For $q_0\in M$ we denote by $W_{q_0}^{1,p}(0,1;M)$ (resp., by $H^1_{q_0}(0,1;M)$) the space of all $q\in W^{1,p}(0,1;M)$ (resp., $q\in H^1_{q_0}(0,1;M)$) such that $q(0)=q_0$.

For every $\ell\in\N$, a mapping $\varphi:M\rightarrow M$ is called a $\mathcal{C}^\ell$ diffeomorphism if it is a bijective mapping of class $\mathcal{C}^\ell$ with an inverse of class $\mathcal{C}^\ell$. The space of all such diffeomorphisms is denoted by $\text{Diff}\, ^\ell(M)$. Note that $\text{Diff}\, ^0(M)$ is the space of all homeomorphisms of $M$. 

For every mapping $f:\R^d\rightarrow X$ of class $\mathcal{C}^\ell$ with compact support, we define the usual semi-norm
$$
\Vert f\Vert_{\ell}=\sup \left\{ \bigg\Vert \frac{\partial^{\ell_1+\dots+\ell_d}f(x)}{\partial x_1^{\ell_1}\dots\partial x_d^{\ell_d}}\bigg\Vert_X \quad \big\vert\quad   x\in \R^d,\ (\ell_1,\dots,\ell_d)\in \N^d,\ \ell_1+\dots+\ell_d\leq \ell   \right\}.
$$
%\textcolor{red}{The spaces $\mathcal{C}^k(\R^d,X)$ and $\text{Diff}\, ^k(\R^d)$ are equipped with the Fréchet structure induced by this family of all such semi-norms, for $U$ compact.}

We define the Banach space $\mathcal{C}_0^\ell(\R^d,\R^d)$ (endowed with the norm $\Vert \cdot\Vert_{\ell}$) as the completion of the space of vector fields of class $\mathcal{C}^\ell$ with compact support on $\R^d$ with respect to the norm $\Vert\cdot\Vert_{\ell}$. In other words, $\mathcal{C}_0^\ell(\R^d,\R^d)$ is the space of vector fields of class $\mathcal{C}^\ell$ on $\R^d$ whose derivatives of order less than or equal to $\ell$ converge to zero at infinity.

We define $\text{Diff}\,_0 ^\ell(\R^d)$ as the set of all diffeomorphisms of class $\mathcal{C}^\ell$ that converge to identity at infinity. Clearly, $\text{Diff}\,_0 ^\ell(\R^d)$ is the set of all $\varphi\in \text{Diff}\,^\ell(\R^d)$ such that $\varphi-\mathrm{Id}_{\R^d}\in \mathcal{C}_0^\ell(\R^d,\R^d)$. It is a group for the composition law $(\varphi,\psi)\mapsto\varphi\circ\psi$.

Note that, if $\ell\geq 1$, then $\text{Diff}\,_0 ^\ell(\R^d)$ is an open subset of the affine Banach space $\mathrm{Id}_{\R^d}+\mathcal{C}_0^\ell(\R^d,\R^d)$. This allows one to develop a differential calculus on $\text{Diff}\,_0 ^\ell(\R^d)$.

\subsection{Preliminaries: deformations and RKHS of vector fields}\label{sec2.1}
Our approach to shape analysis is based on optimizing evolving deformations. A deformation is a one-parameter family of flows in $\R^d$ generated by time-dependent vector fields on $\R^d$. Let us define this concept more rigorously.

\paragraph{Diffeomorphic deformations.}
Let $\ell\in \N^*$. Let
\begin{equation*}
\begin{array}{rcl}
v: [0,1] & \longrightarrow & \mathcal{C}_0^\ell(\R^d,\R^d) \\
t & \longmapsto & v(t)
\end{array}
\end{equation*}
be a time-dependent vector field such that the real-valued function $t\mapsto \Vert v(t)\Vert_{\ell}$ is integrable. In other words, we consider an element $v$ of the space $ L^1(0,1;\mathcal{C}_0^\ell(\R^d,\R^d))$.

According to the Cauchy-Lipshitz theorem, $v$ generates a (unique) flow $\varphi: [0,1]\rightarrow \text{Diff}\,_0 ^1(\R^d)$ (see, e.g., \cite{AgrachevSachkov} or \cite[Chapter 11]{TBOOK}), that is a one-parameter family of diffeomorphisms such that
\begin{equation*}
\begin{split}
\frac{\partial\varphi}{\partial t}(t,x)&=v(t,\varphi(t,x)),\\
\varphi(0,x)&=x,
\end{split}
\end{equation*}
for almost every $t\in [0,1]$ and every $x\in \R^d$. In other words, considering $\varphi$ as a curve in the space $\text{Diff}\,_0 ^1(\R^d)$, the flow $\varphi$ is the unique solution of
\begin{equation}\label{EDOdiffeo}
\begin{split}
\dot{\varphi}(t)&=v(t)\circ\varphi(t),\\
\varphi(0)&=\mathrm{Id}_{\R^d}.
\end{split}
\end{equation}
Such a flow $\varphi$ is called a \textit{deformation} of $\R^d$ of class $\mathcal{C}^\ell$.

\begin{proposition} 
The set of deformations of $\R^d$ of class $\mathcal{C}^\ell$ coincides with the set
$$
\left\{ \varphi\in W^{1,1}(0,1;\text{Diff}\,_0^\ell(\R^d)) \mid \varphi(0)=\mathrm{Id}_{\R^d} \right\}.
$$
In other words, the deformations of $\R^d$ of class $\mathcal{C}^\ell$ are exactly the curves $t\mapsto\varphi(t)$ on $\text{Diff}\,_0^\ell(\R^d)$ that are integrable on $(0,1)$ as well as their derivative, such that $\varphi(0)=\mathrm{Id}_{\R^d}$.
\end{proposition}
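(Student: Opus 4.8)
The plan is to prove the two inclusions separately; in both directions the load‑bearing facts are the same, namely that composition and inversion are well‑behaved operations on $\mathcal{C}_0^\ell(\R^d,\R^d)$ and $\text{Diff}\,_0^\ell(\R^d)$, together with the transfer of integrability in time through these operations.

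\emph{A deformation is a $W^{1,1}$ curve.} Let $\varphi$ be a deformation of class $\mathcal{C}^\ell$, i.e. the flow of some $v\in L^1(0,1;\mathcal{C}_0^\ell(\R^d,\R^d))$; recall it is already known to be a curve in $\text{Diff}\,_0^1(\R^d)$ satisfying $\varphi(t)-\mathrm{Id}_{\R^d}=\int_0^t v(\tau)\circ\varphi(\tau)\,d\tau$. First I would upgrade the spatial regularity: differentiating the flow equation up to $\ell$ times in the space variable and invoking the classical theorem on $\mathcal{C}^\ell$‑dependence of ODE solutions on initial conditions (see \cite{TBOOK}), one gets that $\varphi(t,\cdot)$ is of class $\mathcal{C}^\ell$ and that the derivatives of $\varphi(t,\cdot)-\mathrm{Id}_{\R^d}$ of order $\le\ell$ vanish at infinity because those of $v(\tau,\cdot)$ do, hence $\varphi(t)\in\text{Diff}\,_0^\ell(\R^d)$. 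The quantitative counterpart is a Gronwall estimate: with $n(t)=\Vert\varphi(t)-\mathrm{Id}_{\R^d}\Vert_\ell$ and a chain‑rule bound of the form $\Vert w\circ\psi\Vert_\ell\le P_\ell(\Vert\psi-\mathrm{Id}_{\R^d}\Vert_\ell)\,\Vert w\Vert_\ell$ for a suitable polynomial $P_\ell$, the integral equation gives $n(t)\le\int_0^t P_\ell(n(\tau))\,\Vert v(\tau)\Vert_\ell\,d\tau$, whence $\sup_{[0,1]}n<\infty$. Consequently $\tau\mapsto v(\tau)\circ\varphi(\tau)$ lies in $L^1(0,1;\mathcal{C}_0^\ell(\R^d,\R^d))$, so $t\mapsto\varphi(t)-\mathrm{Id}_{\R^d}$ is the primitive of an $L^1$ curve in $\mathcal{C}_0^\ell(\R^d,\R^d)$, i.e. $\varphi\in W^{1,1}(0,1;\text{Diff}\,_0^\ell(\R^d))$ with $\varphi(0)=\mathrm{Id}_{\R^d}$.

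\emph{Conversely.} Let $\varphi\in W^{1,1}(0,1;\text{Diff}\,_0^\ell(\R^d))$ with $\varphi(0)=\mathrm{Id}_{\R^d}$. By definition of this Sobolev space, $\varphi(t)\in\text{Diff}\,_0^\ell(\R^d)$ for all $t$, $\dot\varphi(t)\in\mathcal{C}_0^\ell(\R^d,\R^d)$ for a.e. $t$ with $\int_0^1\Vert\dot\varphi(t)\Vert_\ell\,dt<\infty$, and $\varphi(t)-\mathrm{Id}_{\R^d}=\int_0^t\dot\varphi(\tau)\,d\tau$. Set $v(t)=\dot\varphi(t)\circ\varphi(t)^{-1}$. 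I would first record that $\varphi(t)^{-1}\in\text{Diff}\,_0^\ell(\R^d)$ and that $t\mapsto\varphi(t)^{-1}$ is continuous into $\mathrm{Id}_{\R^d}+\mathcal{C}_0^\ell(\R^d,\R^d)$ — this follows from the inverse mapping theorem applied on the open set $\text{Diff}\,_0^\ell(\R^d)$ of the affine Banach space $\mathrm{Id}_{\R^d}+\mathcal{C}_0^\ell(\R^d,\R^d)$, together with the continuity of $\varphi$; in particular $\sup_{[0,1]}\Vert\varphi(t)^{-1}-\mathrm{Id}_{\R^d}\Vert_\ell<\infty$. Then the same chain‑rule bound yields $\Vert v(t)\Vert_\ell\le P_\ell\big(\Vert\varphi(t)^{-1}-\mathrm{Id}_{\R^d}\Vert_\ell\big)\,\Vert\dot\varphi(t)\Vert_\ell$, so $v\in L^1(0,1;\mathcal{C}_0^\ell(\R^d,\R^d))$, and $v(t,\cdot)$ decays at infinity along with $\dot\varphi(t,\cdot)$. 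By construction $\dot\varphi(t)=v(t)\circ\varphi(t)$ a.e. with $\varphi(0)=\mathrm{Id}_{\R^d}$, so uniqueness in the Cauchy--Lipschitz theorem identifies $\varphi$ with the flow of $v$, which is therefore a deformation of class $\mathcal{C}^\ell$.

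\emph{Main obstacle.} Everything hinges on two points that look routine but are genuinely where the work sits: (i) the composition estimate above and the corresponding local boundedness and continuity of the inversion map on $\text{Diff}\,_0^\ell(\R^d)$, which is precisely what makes $\mathcal{C}_0^\ell(\R^d,\R^d)$ and $\text{Diff}\,_0^\ell(\R^d)$ the appropriate spaces; and (ii) the measurability and integrability bookkeeping needed to assert that $\tau\mapsto v(\tau)\circ\varphi(\tau)$ and $t\mapsto\dot\varphi(t)\circ\varphi(t)^{-1}$ are strongly measurable $L^1$ curves with values in $\mathcal{C}_0^\ell(\R^d,\R^d)$, so that the fundamental theorem of calculus for the Bochner integral applies. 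Both are classical in this setting (see \cite{TBOOK,TY2}), and I would cite them rather than reprove them.
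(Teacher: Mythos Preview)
Your proposal is correct and follows essentially the same approach as the paper: both directions rest on Fa\`a di Bruno--type composition estimates combined with Gronwall for the forward inclusion, and on the continuity/boundedness of $t\mapsto\varphi(t)^{-1}$ together with the same composition estimates for the converse, with $v(t)=\dot\varphi(t)\circ\varphi(t)^{-1}$. The only cosmetic difference is that the paper derives the composition bound explicitly by induction on the order of derivatives (their inequalities \eqref{derdiffeo}--\eqref{derdiffeo2}), whereas you package it as a single polynomial estimate $\Vert w\circ\psi\Vert_\ell\le P_\ell(\Vert\psi-\mathrm{Id}_{\R^d}\Vert_\ell)\,\Vert w\Vert_\ell$ and cite it.
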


\begin{proof}
Let us first prove that there exists a sequence of positive real numbers $(D_n)_{n\in\N}$ such that for every deformation $\varphi$ of $\R^d$ of class $\mathcal{C}^\ell$, with $\ell\in\N^*$, induced by the time-dependent vector field $v\in L^1(0,1;\mathcal{C}_0^\ell(\R^d,\R^d))$, one has
\begin{equation}\label{derdiffeo}
\sup_{t\in[0,1]}\Vert \varphi(t)-\mathrm{Id}_{\R^d}\Vert_{i}\leq D_i\mathrm{exp}\Big(D_i\int_0^1\Vert v(t)\Vert_{i}\Big),
\end{equation}
for every $i\in \{0,\ldots,\ell\}$.

The case $i=0$ is an immediate consequence of the integral formulation of \eqref{EDOdiffeo}. Combining the formula for computing derivatives of a composition of mappings with an induction argument shows that the derivatives of order $i$ of $v\circ\varphi$ are polynomials in the derivatives of $v$ and $\varphi$ of order less than or equal to $i$. Moreover, these polynomials are of degree one with respect to the derivatives of $v$, and also of degree one with respect to the derivatives of $\varphi$ of order $i$. Therefore we can write
\begin{equation}\label{derdiffeo2}
\left\vert \frac{d}{dt}\partial_x^i\varphi(t,x)\right\vert\leq\Vert v(t)\Vert_{i}\vert \partial_x^i\varphi(t,x)\vert+\Vert v(t)\Vert_{i-1}P_i(\vert \partial_x^1\varphi(t,x)\vert,\dots,\vert \partial_x^{i-1}\varphi(t,x)\vert),
\end{equation}
where $P_i$ is a polynomial independent of $v$ and $\varphi$, and the norms of the derivatives of the $\partial_x^{j}\varphi(t,x)$ are computed in the space of $\R^d$-valued multilinear maps. The result then follows from Gronwall estimates and from an induction argument on $i$.

That any deformation of $\R^d$ of class $\mathcal{C}^\ell$ is a curve of class $W^{1,1}$ in $\text{Diff}\,_0^\ell(\R^d)$ is then a direct consequence of \eqref{derdiffeo} and \eqref{derdiffeo2}. 

Conversely, for every curve $\varphi$ on $\text{Diff}\,_0^\ell(\R^d)$ of class $W^{1,1}$, we set $v(t)=\dot{\varphi}(t)\circ\varphi^{-1}(t),$ for every $t\in[0,1]$. We have $\dot{\varphi}(t)=v(t)\circ\varphi(t)$ for almost every $t\in[0,1]$, and hence it suffices to prove that $t\mapsto\Vert v(t)\Vert_\ell$ is integrable. The curve $\varphi$ is continuous on $[0,1]$ and therefore is bounded. This implies that $t\mapsto\varphi(t)^{-1}$ is bounded as well. The formula for computing derivatives of compositions of maps then shows that $\Vert v(t)\Vert_\ell$ is integrable whenever $t\mapsto\Vert \dot\varphi(t)\Vert_\ell$ is integrable, which completes the proof since $\varphi$ is of class $W^{1,1}$.
\end{proof}

\paragraph{Reproducing Kernel Hilbert Spaces of vector fields.}
Let us briefly recall the definition and a few properties of RKHS's (see \cite{RK,TY2} for more details). Let $k\in\N^*$ be fixed.

Given a Hilbert space $(V,(\cdot,\cdot)_V)$, according to the Riesz representation theorem, the mapping $v\mapsto (v,\cdot)_V$ is a bijective isometry between $V$ and $V^*$, whose inverse is denoted by $K_V$. Then for every $p\in V^*$ and every $v\in V$ one has $\langle p,v\rangle_{V^*,V}= (K_Vp,v)_V$ and $\Vert p\Vert_{V^*}^2=\Vert K_Vp\Vert_V^2=\langle p, K_Vp\rangle_{V^*,V}$.

\begin{definition} 
A Reproducing Kernel Vector Space (RKHS) of vector fields of class $\mathcal{C}_0^\ell$ is a Hilbert space $(V,(\cdot,\cdot)_V)$ of vector fields on $\R^d,$ such that $V\subset \mathcal{C}^\ell_0(\R^d,\R^d)$ with continuous inclusion.
\end{definition}

Let $V$ be an RKHS of vector fields of class $\mathcal{C}_0^\ell$. Then, for all $(b,y)\in(\R^d)^2$, by definition the linear form $b\otimes\delta_y$ on $V$, defined by $b\otimes\delta_y(v)=b^Tv(y)$ for every $v\in V$, is continuous (actually this continuity property holds as well for every compactly supported vector-valued distribution of order at most $\ell$ on $\R^d$).
By definition of $K_V$, there holds $\langle b\otimes\delta_y,v\rangle_{V^*,V} = ( K_V(b\otimes \delta_y) , v )_V$.
The \textit{reproducing kernel} $K$ of $V$ is then the mapping defined on $\R^d\times\R^d$, with values in the set of real square matrices of size $d$, defined by 
\begin{equation}\label{defnoyauK}
K(x,y)b=K_V(b\otimes \delta_y)(x),
\end{equation}
for all $(b,x,y)\in(\R^d)^3$. 
In other words, there holds $(K(\cdot,y)b,v)_V=b^Tv(y)$, for all $(b,y)\in(\R^d)^2$ and every $v\in V$, and $K(\cdot,y)b=K_V(b\otimes \delta_y)$ is a vector field of class $\mathcal{C}^\ell$ in $\R^d$, element of $V$.

It is easy to see that $(K(\cdot,x)a,K(\cdot,y)b)_V=a^TK(x,y)b$, for all $(a,b,x,y)\in(\R^d)^4$, and hence that $K(x,y)^T=K(y,x)$ and that $K(x,x)$ is positive semi-definite under the assumption that no nontrivial linear combination $a_1^Tv(x_1)+ \cdots +a_n^Tv(x_n)$, with given distinct $x_j$'s can vanish for every $v\in V$.
Finally, writing $K_V(a\otimes\delta_y)(x) = K(x,y)a = \int_{\R^d} K(x,s)a\, d\delta_y(s)$, we have
\begin{equation}\label{formuledistribution}
K_Vp(x)=\int_{\R^d}K(x,y) \, dp(y),
\end{equation}
for every compactly supported vector-valued distribution $p$ on $\R^d$ of order less than or equal to $\ell$.\footnote{Indeed, it suffices to note that 
$$
b^TK_Vp(x)=\left(b\otimes\delta_x,K_Vp\right)_{V^*,V}=\left(p,K_V b\otimes\delta_x\right)=\int_{\R^d}(K(y,x)b)^Tdp(y)=b^T\int_{\R^d}K(x,y)dp(y).
$$
}

As explained in \cite{RK,YBOOK}, one of the interests of such a structure is that we can define the kernel itself instead of defining the space $V$. Indeed a given kernel $K$ yields a unique associated RKHS. It is usual to consider kernels of the form $K(x,y)=\gamma(\vert x-y\vert)Id_{\R^d}$ with $\gamma\in\mathcal{C}^\infty (\R)$.
Such a kernel yields a metric that is invariant under rotation and translation. The most common model is when $\gamma$ is a Gaussian function but other families of kernels can be used as well \cite{TY1,YBOOK}.

\subsection{From shape space problems to optimal control}\label{modshsp} 
We define a \textit{shape space} in $\R^d$ as an open subset $M$ of a Banach space $X$ on which the group of diffeomorphisms of $\R^d$ acts in a certain way. The elements of $M$, called \textit{states of the shape}, are denoted by $q$. They are usually subsets or immersed submanifolds of $\R^d$, with a typical definition of the shape space as the set $M=\mathrm{Emb}^1(S,\R^d)$ of all embeddings of class $\mathcal{C}^1$ of a given Riemannian manifold $S$ into $\R^d$. For example, if $S$ is the unit circle then $M$ is the set of all parametrized $\mathcal{C}^1$ simple closed curves in $\R^d$.  
In practical applications or in numerical implementations, one has to consider finite-dimensional approximations, so that $\mathcal{S}$ usually just consists of a finite set of points, and then $M$ is a space of landmarks (see \cite{TY1,YBOOK} and see examples further).

Let us first explain how the group of diffeomorphisms acts on the shape space $M$, and then in which sense this action induces a control system on $M$.

\paragraph{The group structure of $\text{Diff}\,_0^\ell(\R^d)$.} 
Let $\ell\in \N^*$. The set $\text{Diff}\,_0^\ell(\R^d)$ is an open subspace of the affine Banach space $\mathrm{Id}_{\R^d}+\mathcal{C}_0^\ell(\R^d,\R^d)$ and also a group for the composition law. However, we can be more precise. 

First of all, the mappings $(\varphi,\psi)\mapsto \varphi\circ\psi$ and $\varphi\mapsto\varphi^{-1}$ are continuous (this follows from the formula for the computation of the derivatives of compositions of mappings).

Moreover, for every $\psi\in\text{Diff}\,_0^\ell(\R^d)$, the right-multiplication mapping $\varphi\mapsto R_\psi(\varphi)=\varphi\circ\psi$ is Lipschitz and of class $\mathcal{C}^1$, as the restriction of the continuous affine map $(\mathrm{Id}_{\R^d}+v)\mapsto (\mathrm{Id}_{\R^d}+v)\circ \psi$. Its derivative $(dR_{\psi})_{Id_{\R^d}}:\mathcal{C}_0^\ell(\R^d,\R^d)\rightarrow\mathcal{C}_0^\ell(\R^d,\R^d)$ at $\mathrm{Id}_{\R^d}$ is then given by $v\mapsto v\circ\psi$. Moreover, $(v,\psi)\mapsto v\circ\psi$ is easily seen to be continuous.

Finally,  the mapping 
\begin{equation*}
\begin{array}{rcl}
\mathcal{C}^{\ell+1}_0(\R^d,\R^d)\times\text{Diff}\,_0^\ell(\R^d)&\rightarrow&\ \mathcal{C}^\ell(\R^d,\R^d) \\
(v,\psi)&\mapsto &\ v\circ\psi
\end{array}
\end{equation*}
is of class $\mathcal{C}^{1}$. Indeed we have $\Vert v\circ(\psi+\delta\psi)-v\circ\psi-dv_{\psi}.\delta\psi\Vert_{\ell}=o(\Vert \delta\psi\Vert_{\ell})$, for every $\delta \psi\in\mathcal{C}_0^\ell(\R^d,\R^d)$.
Then, using the uniform continuity of any derivative $d^iv$ of order $i\leq\ell$, it follows that the mapping $\psi\mapsto dv_{\psi}$ is continuous. These properties are useful for the study of the Fr\'echet Lie group structure of $\text{Diff}\,^{\infty}_0(\R^d)$ \cite{OBOOK}.

\paragraph{Group action on the shape space.}
In the sequel, we fix $\ell\in\N$, and we assume that the space $\text{Diff}_0^{\max(1,\ell)}(\R^d)$ acts continuously on $M$ (recall that $M$ is an open subset of a Banach space $X$) according to a mapping
\begin{equation}\label{action}
\begin{array}{rcl}
\text{Diff}_0 ^{\max(1,\ell)}(\R^d)\times M & \rightarrow & M \\
(\varphi,q) & \mapsto & \varphi\cdot q ,
\end{array}
\end{equation}
such that $\mathrm{Id}_{\R^d}\cdot q=q$ and $\varphi\cdot(\psi\cdot q)=(\varphi\circ\psi)\cdot q$ for every $q\in M$ and all $(\varphi,\psi)\in (\text{Diff}^{\max(1,\ell)}(\R^d))^2$.

\begin{definition}\label{infgaction}
$M$ is a \textit{shape space of order} $\ell\in \N$ if the action \eqref{action} is compatible with the properties of the group structure of $\text{Diff}\,_0^{\max(1,\ell)}(\R^d)$ described above, that is:
\begin{itemize}
\item For every $q\in M$ fixed, the mapping $\varphi\mapsto\varphi\cdot q$ is Lipschitz with respect to the (weaker when $\ell=0$) norm $\Vert\cdot\Vert_{\ell}$, i.e., there exists $\gamma>0$ such that
\begin{equation}\label{lipac}
\Vert \varphi_1\cdot q-\varphi_2\cdot q\Vert_{X}\leq \gamma\Vert \varphi_1-\varphi_2\Vert_{\ell}
\end{equation}
for all $(\varphi_1,\varphi_2)\in (\text{Diff}\,_0^{\max(1,\ell)}(\R^d))^2$.
\item The mapping $\varphi\mapsto\varphi\cdot q$ is differentiable at $\mathrm{Id}_{\R^d}$. This differential is denoted by $\xi_q$ and is called the \textit{infinitesimal action} of $\mathcal{C}_0^{\max(1,\ell)}(\R^d,\R^d)$. From \eqref{lipac} one has
$$
\Vert\xi_qv\Vert_{X}\leq \gamma\Vert v\Vert_{\ell},
$$
for every $v\in\mathcal{C}_0^{\ell}(\R^d,\R^d)$, and if $\ell=0$ then $\xi_q$ has a unique continuous extension to the whole space $\mathcal{C}_0^0(\R^d,\R^d)$.
\item The mapping
\begin{equation}\label{def_xi}
\begin{array}{rcl}
\xi : M\times \mathcal{C}^{\ell}_0(\R^d,\R^d) \longrightarrow X \\
(q,v)\longmapsto\xi_qv
\end{array}
\end{equation}
is continuous, and its restriction to $M\times \mathcal{C}^{\ell+1}_0(\R^d,\R^d) $ is of class $\mathcal{C}^{1}$. In particular the mapping $q\mapsto\xi_qv$ is of class $\mathcal{C}^{1}$, for every bounded vector field $v$ of class $\mathcal{C}^{\ell+1}$.
\end{itemize}
\end{definition}

\begin{example}
For $\ell\geq 1$, the action of $\text{Diff}\,_0^\ell(\R^d)$ on itself by left composition makes it a shape space of order $\ell$ in $\R^d$.
\end{example}

\begin{example}\label{example1}
Let $\ell\in \N$ and let $S$ be a $\mathcal{C}^\ell$ smooth compact Riemannian manifold. Consider the space $M=X=\mathcal{C}^\ell(S,\R^d)$ equipped with its usual Banach norm. Then $M$ is a shape space of order $\ell$, where the action of $\text{Diff}\,_0^{\max(1,\ell)}(\R^d)$ is given by the composition $\varphi\cdot q=\varphi\circ q$. Indeed, it is continuous thanks to the rule for computing derivatives of a composition, and we also have
$$
\Vert \varphi_1\cdot q-\varphi_2\cdot q \Vert_{X}\leq\gamma\Vert \varphi_1-\varphi_2\Vert_{\ell}.
$$
Moreover, given $q\in M$ and $v\in\mathcal{C}^\ell_0(\R^d,\R^d)$, $\xi_qv$ is the vector field along $q$ given by $\xi_q(v)=v\circ q\in \mathcal{C}^\ell(M,\R^d)$. Finally, the formula for computing derivatives of a composition yields
$$
\Vert v\circ(q+\delta q)-v\circ q-dv_{q}.\delta q\Vert_{X}=o(\Vert \delta q\Vert_{X}),
$$
for every $\delta q\in M$, and the last part of the definition follows. This framework describes most of shape spaces.

An interesting particular case of this general example is when $S=(s_1,\dots,s_n)$ is a finite set (zero-dimensional manifold), $X=(\R^d)^n$ and 
$$
M=\mathrm{Lmk}_d(n)= \lbrace (x_1,\dots,x_n)\in(\R^d)^n\ \vert\ x_i\neq x_j\ \text{if}\ i\neq j\rbrace 
$$ 
is a (so-called) space of $n$ landmarks in $\R^d$. For $q=(x_1,\dots,x_n)$, the smooth action of order $0$ is $\varphi\cdot q=(\varphi(x_1),\dots,\varphi(x_n))$.
For $v\in\mathcal{C}^0_0(\R^2,\R^2)$, the infinitesimal action of $v$ at $q$ is given by $\xi_q(v)=(v(x_1),\dots,v(x_n))$.
\end{example}

\begin{remark}
In most cases, and in all examples given throughout this paper, the mapping $\xi$ restricted to $M\times \mathcal{C}^{\ell+k}_0(\R^d,\R^d)$ is of class $\mathcal{C}^k$, for every $k\in \N$.
\end{remark}

\begin{proposition}
For every $q\in M$, the mapping $\varphi\mapsto\varphi\cdot q$ is of class $\mathcal{C}^1$, and its differential at $\varphi$ is given by $\xi_{\varphi\cdot q}dR_{\varphi^{-1}}$.
In particular, given $q_0\in M$ and given $\varphi$ a deformation of $\R^d$ of class $\mathcal{C}^{\max{1,\ell}}_0$, which is the flow of the time-dependent vector field $v$, the curve $t\mapsto q(t)=\varphi(t)\cdot q_0$ is of class $W^{1,1}$ and one has 
\begin{equation}\label{edoq}
\dot{q}(t)=\xi_{\varphi(t)\cdot q_0}\dot{\varphi}(t)\circ\varphi(t)^{-1}=\xi_{q(t)}v(t),
\end{equation}
for almost every $t\in[0,1]$.
\end{proposition}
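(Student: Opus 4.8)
The plan is to reduce the differentiability of $\Phi\colon\varphi\mapsto\varphi\cdot q$ at an arbitrary point to the differentiability-at-identity axiom of Definition \ref{infgaction}, using right-invariance. Fix $\varphi_0\in\text{Diff}\,_0^{\max(1,\ell)}(\R^d)$ and, on a neighborhood of $\varphi_0$, write $\Phi=A\circ R_{\varphi_0^{-1}}$ with $R_{\varphi_0^{-1}}\colon\psi\mapsto\psi\circ\varphi_0^{-1}$ and $A\colon\eta\mapsto\eta\cdot(\varphi_0\cdot q)$, noting $R_{\varphi_0^{-1}}(\varphi_0)=\mathrm{Id}_{\R^d}$. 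As recalled just above the statement, $R_{\varphi_0^{-1}}$ is the restriction of a continuous affine map of the affine Banach space $\mathrm{Id}_{\R^d}+\mathcal{C}_0^{\max(1,\ell)}(\R^d,\R^d)$, hence of class $\mathcal{C}^1$ with constant differential $dR_{\varphi_0^{-1}}\colon w\mapsto w\circ\varphi_0^{-1}$, while by the axioms $A$ is differentiable at $\mathrm{Id}_{\R^d}$ with differential $\xi_{\varphi_0\cdot q}$. The chain rule then gives that $\Phi$ is differentiable at $\varphi_0$ with $d\Phi_{\varphi_0}=\xi_{\varphi_0\cdot q}\circ dR_{\varphi_0^{-1}}$, which is the announced formula. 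Since $\varphi_0$ was arbitrary, $\Phi$ is differentiable everywhere, and from \eqref{lipac} one reads off that it is $\gamma$-Lipschitz for $\Vert\cdot\Vert_\ell$, whence $\Vert d\Phi_\varphi\Vert\leq\gamma$ uniformly.

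For the $\mathcal{C}^1$ regularity --- understood here as continuity of the total differential $(\varphi,w)\mapsto d\Phi_\varphi(w)=\xi_{\varphi\cdot q}(w\circ\varphi^{-1})$ --- I would argue by composition of continuous maps: $(\varphi,w)\mapsto(\varphi\cdot q,\,w\circ\varphi^{-1})$ is continuous with values in $M\times\mathcal{C}_0^\ell(\R^d,\R^d)$, using the continuity of the action, of $\varphi\mapsto\varphi^{-1}$, and of $(w,\psi)\mapsto w\circ\psi$ on $\mathcal{C}_0^\ell(\R^d,\R^d)\times\text{Diff}\,_0^\ell(\R^d)$ (when $\ell=0$, using moreover that elements of $\mathcal{C}_0^0(\R^d,\R^d)$ are uniformly continuous and that $\xi_q$ extends continuously to $\mathcal{C}_0^0(\R^d,\R^d)$), and then $\xi\colon M\times\mathcal{C}_0^\ell(\R^d,\R^d)\to X$ is continuous by hypothesis. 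I expect this to be the only point requiring real care: because of the loss of one derivative hidden in $w\mapsto w\circ\varphi^{-1}$, one cannot in general upgrade this to operator-norm continuity of $\varphi\mapsto d\Phi_\varphi$, so it is precisely the joint-continuity axioms on $\xi$ and on composition recalled above that must be invoked.

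Finally, for the last assertion, let $\varphi$ be a deformation of class $\mathcal{C}^{\max(1,\ell)}$ generated by $v\in L^1(0,1;\mathcal{C}_0^{\max(1,\ell)}(\R^d,\R^d))$; by the previous Proposition $t\mapsto\varphi(t)$ belongs to $W^{1,1}(0,1;\text{Diff}\,_0^{\max(1,\ell)}(\R^d))$ with $\dot\varphi(t)=v(t)\circ\varphi(t)$ for a.e.\ $t$, by \eqref{EDOdiffeo}. Since $\Phi$ is $\gamma$-Lipschitz and differentiable, $q=\Phi\circ\varphi$ is absolutely continuous, hence of class $W^{1,1}$, and the chain rule $\dot q(t)=d\Phi_{\varphi(t)}(\dot\varphi(t))$ holds for a.e.\ $t$; this is routine once one notes that $\varphi(t+h)=\varphi(t)+h\dot\varphi(t)+o(h)$ at a.e.\ $t$ (Lebesgue points of $\dot\varphi$) and uses the Fr\'echet differentiability of $\Phi$ at $\varphi(t)$ together with the uniform bound $\Vert d\Phi_{\varphi(t)}\Vert\leq\gamma$. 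Substituting the formula for $d\Phi_{\varphi(t)}$ gives $\dot q(t)=\xi_{\varphi(t)\cdot q_0}\big(\dot\varphi(t)\circ\varphi(t)^{-1}\big)$, and $\dot\varphi(t)\circ\varphi(t)^{-1}=v(t)\circ\varphi(t)\circ\varphi(t)^{-1}=v(t)$ yields exactly \eqref{edoq}; moreover $q\in W^{1,1}(0,1;M)$ since $\Vert\dot q(t)\Vert_X=\Vert\xi_{q(t)}v(t)\Vert_X\leq\gamma\Vert v(t)\Vert_\ell$ with $t\mapsto\Vert v(t)\Vert_\ell$ integrable.
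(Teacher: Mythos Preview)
Your proof is correct and follows essentially the same approach as the paper. Both arguments factor the action map through right-translation to reduce to differentiability at the identity: you write $\Phi=A\circ R_{\varphi_0^{-1}}$ and invoke the chain rule, while the paper performs the equivalent computation directly by writing $\varphi+\delta\varphi=(\mathrm{Id}_{\R^d}+v)\circ\varphi$ with $v=\delta\varphi\circ\varphi^{-1}$ and using the group action property $(\varphi+\delta\varphi)\cdot q=(\mathrm{Id}_{\R^d}+v)\cdot(\varphi\cdot q)$. Your treatment is in fact more detailed than the paper's on two points the paper leaves implicit: the continuity of the differential (where you rightly flag the distinction between joint continuity of $(\varphi,w)\mapsto d\Phi_\varphi(w)$ and operator-norm continuity), and the chain-rule justification for $\dot q(t)=d\Phi_{\varphi(t)}\dot\varphi(t)$ in the $W^{1,1}$ setting, which the paper dispatches with ``the result follows.''
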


\begin{proof}
Let $q_0\in M$, fix $\varphi\in \text{Diff}\,^\ell_0(\R^d)$ and take $\delta \varphi\in T_{\varphi}\text{Diff}\,_0^\ell(\R^d)=\mathcal{C}^\ell_0(\R^d,\R^d)$. Then $\varphi+\delta\varphi\in\text{Diff}\,_0^\ell(\R^d)$ for $\Vert\delta \varphi\Vert_\ell$ small enough. We define $v=(dR_{\varphi^{-1}})_{\varphi}\delta \varphi=\delta\varphi\circ\varphi^{-1}$. We have
$$
(\varphi+\delta\varphi)\cdot q=(\mathrm{Id}_{\R^d}+v)\cdot(\varphi\cdot q)=\varphi\cdot q +\xi_{\varphi\cdot q}v+o(v)=\varphi\cdot q +\xi_{\varphi\cdot q}\dot{\varphi}\circ\varphi^{-1}+o(\delta\varphi),
$$
and therefore the mapping $\varphi\mapsto\varphi\cdot q$ is differentiable at $\varphi$, with continuous differential $\xi_{\varphi\cdot q}dR_{\varphi^{-1}}$. The result follows.
\end{proof}

The result of this proposition shows that the shape $q(t) = \varphi(t)\cdot q_0$ is evolving in time according to the differential equation \eqref{edoq}, where $v$ is the time-dependent vector field associated with the deformation $\varphi$.

At this step we make a crucial connection between shape space analysis and control theory, by adopting another point of view.
The differential equation \eqref{edoq} can be seen as a control system on $M$, where the time-dependent vector field $v$ is seen as a control.
In conclusion, the group of diffeomorphisms acts on the shape space $M$, and this action induces a control system on $M$.

As said in the introduction, in shape analysis problems, the shapes are usually assumed to evolve in time according to the minimization of some objective functional \cite{TY2}. With the control theory viewpoint developed above, this leads us to model the shape evolution as an optimal control problem settled on $M$, that we define hereafter.

\paragraph{Induced optimal control problem on the shape space.}
We assume that the action of $\text{Diff}\,^{\max(\ell,1)}_0(\R^d)$ on $M$ is smooth of order $\ell\in\N$. Let $(V,( \cdot,\cdot) _V)$ be an RKHS of vector fields of class $\mathcal{C}_0^{\ell}$ on $\R^d$. Let $K$ denote its reproducing kernel (as defined in Section \ref{sec2.1}). Let $Y$ be another Banach space. Most problems of shape analysis can be recast as follows.

\begin{problem}\label{diffprob1}
Let $q_0\in M$, and let $C:M\times V\rightarrow Y$ be a mapping such that $v\mapsto C(q,v)=C_qv$ is linear for every $q\in M$. 
Let $g:M\rightarrow\R$ be a function.
We consider the problem of minimizing the functional
\begin{equation}\label{defJ1}
J_1(q,v)=\frac{1}{2}\int_0^1\Vert v(t)\Vert^2_V \, dt+g(q(1))
\end{equation}
over all $(q(\cdot),v(\cdot))\in W_{q_0}^{1,1}(0,1;M)\times L^2(0,1;V)$ such that $\dot{q}(t)=\xi_{q(t)} v(t)$ and $C_{q(t)}v(t)=0$ for almost every $t\in [0,1]$.
\end{problem}

In the problem above, $q_0$ stands for an initial shape, and $C$ stands for continuous constraints. Recall that the infinitesimal action can be extended to the whole space $\mathcal{C}^{\ell}_0(\R^d,\R^d)$. 

Note that if $t\mapsto v(t)$ is square-integrable then $t\mapsto \dot{q}(t)$ is square-integrable as well. Indeed this follows from the differential equation $\dot{q}(t)=\xi_{q(t)} v(t)$ and from Gronwall estimates. Therefore the minimization runs over the set of all $(q(\cdot),v(\cdot))\in H^1_{q_0}(0,1;M)\times L^2(0,1;V)$.

Problem \ref{diffprob1} is an infinite-dimensional optimal control problem settled on $M$, where the state $q(t)$ is a shape and the control $v(\cdot)$ is a time-dependent vector field.
The constraints $C$ can be of different kinds, as illustrated further.
A particular but important case of constraints consists of \textit{kinetic} constraints, i.e., constraints on the speed $\dot{q}=\xi_qv$ of the state, which are of the form $C_{q(t)}\xi_{q(t)}v(t)=C_{q(t)}\dot{q}(t)=0$. Pure state constraints, of the form $C(q(t))=0$ with a differentiable map $C:M\rightarrow Y$, are in particular equivalent to the kinetic constraints $dC_{q(t)}. \xi_{q(t)}v(t)=0$.

To the best of our knowledge, except very few studies (such as \cite{Y1}), only unconstrained problems have been studied so far (i.e., with $C=0$).
In contrast, the framework that we provide here is very general and permits to model and solve far more general constrained shape deformation problems.

\begin{remark}\label{rempb1_unknowns}
Assume $V$ is an RKHS of class $\mathcal{C}^{1}_0$, and let $v(\cdot)\in L^2(0,1;V)$. Then $v$ induces a unique deformation $t\mapsto \varphi(t)$ on $\R^d$, and the curve $t\mapsto q_v(t)=\varphi(t)\cdot q_0$ satisfies $q(0)=q_0$ and $\dot{q}_v(t)=\xi_{q_v(t)} v(t)$ for almost every $t\in[0,1]$. As above, it follows from the Gronwall lemma that $q\in H^1_{q_0}(0,1;M)$. Moreover,  according to the Cauchy-Lipshitz theorem, if $\ell \geq 1$ then $q(\cdot)$ is the unique such element of $H^{1}_{q_0}(0,1;M)$. Therefore, if $\ell \geq 1$ then Problem \ref{diffprob1} is equivalent to the problem of minimizing the functional $v\mapsto J_1(v,q_v)$ over all $v\in L^2(0,1;V)$ such that $C_{q_v(t)}v(t)=0$ for almost every $t\in[0,1]$.
\end{remark}

Concerning the existence of an optimal solution of Problem \ref{diffprob1}, we need the following definition.

\begin{definition}
\label{compsup} A state $q$ of a shape space $M$ of order $\ell$ is said to have compact support if for some compact subset $U$ of $\R^d$, for some $\gamma>0$ and for all $(\varphi_1,\varphi_2)\in (\text{Diff}\,_0^{\max(\ell,1)}(\R^d))^2$, we have
$$
\Vert \varphi_1\cdot q-\varphi_2\cdot q\Vert\leq\gamma\Vert(\varphi_1-\varphi_2)_{\vert U}\Vert_{\ell},
$$
where $(\varphi_1-\varphi_2)_{\vert U}$ denotes the restriction of $\varphi_1-\varphi_2$ to $U$.
\end{definition}

Except for $\text{Diff}\,_0^{\max(\ell,1)}(\R^d)$ itself, every state of every shape space given so far in examples had compact support.

\begin{theorem}\label{existence}
Assume that $V$ is an RKHS of vector fields of class $\mathcal{C}^{\ell+1}$ on $\R^d$, that $q\mapsto C_q$ is continuous, and that $g$ is bounded below and lower semi-continuous. If $q_0$ has compact support, then Problem \ref{diffprob1} has at least one solution.
\end{theorem}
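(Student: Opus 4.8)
The plan is to run the direct method of the calculus of variations. Since $g$ is bounded below and the pair $q\equiv q_0$, $v\equiv 0$ is admissible (one has $C_{q_0}0=0$ by linearity of $v\mapsto C_qv$, and $J_1=g(q_0)<\infty$), the infimum of $J_1$ is finite; let $(q_n(\cdot),v_n(\cdot))$ be a minimizing sequence. Boundedness of $g$ from below forces $\int_0^1\|v_n(t)\|_V^2\,dt$ to be bounded, so, $L^2(0,1;V)$ being a Hilbert space, up to a subsequence $v_n\rightharpoonup v$ weakly in $L^2(0,1;V)$. Denote by $\varphi_n$, $\varphi$ the flows of $v_n$, $v$ (well defined by Cauchy--Lipschitz since $v_n,v\in L^1(0,1;\mathcal C^{\ell+1}_0(\R^d,\R^d))$), and set $q_n(t)=\varphi_n(t)\cdot q_0$, $q(t)=\varphi(t)\cdot q_0$.

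The heart of the proof is to show that $q_n\to q$ uniformly on $[0,1]$ with values in $X$. Fix a compact support set $U$ of $q_0$ and a closed ball $\tilde U\supset U$. Combining~\eqref{derdiffeo} (for $i=\ell+1$) with $\int_0^1\|v_n(t)\|_{\ell+1}\,dt\le c\,\|v_n\|_{L^2(0,1;V)}$ and the composition estimate behind~\eqref{derdiffeo2}, one bounds $\sup_{t}\|\varphi_n(t)-\mathrm{Id}_{\R^d}\|_{\ell+1}$ uniformly in $n$ and obtains the uniform modulus of continuity $\|\varphi_n(t)-\varphi_n(t')\|_{\ell}\le C\,|t-t'|^{1/2}$. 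Since $V\subset\mathcal C^{\ell+1}_0(\R^d,\R^d)$ with continuous inclusion, the restrictions $\varphi_n(t)_{|\tilde U}$ are bounded in $\mathcal C^{\ell+1}(\tilde U)$, hence relatively compact in $\mathcal C^{\ell}(\tilde U)$; the Arzelà--Ascoli theorem for maps $[0,1]\to\mathcal C^{\ell}(\tilde U)$ then yields a subsequence with $\varphi_n\to\psi$ in $\mathcal C([0,1];\mathcal C^{\ell}(\tilde U))$. Passing to the limit (already in $\mathcal C^0(\tilde U)$ suffices here) in $\varphi_n(t)=\mathrm{Id}_{\R^d}+\int_0^t v_n(s)\circ\varphi_n(s)\,ds$ on $\tilde U$ — splitting $v_n\circ\varphi_n-v\circ\psi=(v_n\circ\varphi_n-v_n\circ\psi)+(v_n-v)\circ\psi$, bounding the first summand by $\|\varphi_n-\psi\|_{\mathcal C([0,1];\mathcal C^0(\tilde U))}$ times $\int_0^1\|v_n(s)\|_V\,ds$, and handling the second by observing that for each $x$ the map $w\mapsto\int_0^t w(s)(\psi(s,x))\,ds$ is a continuous linear functional on $L^2(0,1;V)$ (by continuity of the reproducing kernel in its second variable), with convergence uniform in $x\in\tilde U$ because of the equi-Lipschitz bound in $x$ — shows that $\psi$ solves the flow equation for $v$ on $\tilde U$, hence $\psi(t)=\varphi(t)_{|\tilde U}$ by Gronwall uniqueness. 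Finally, $q_0$ having compact support in $U$,
$$\|q_n(t)-q(t)\|_X\le\gamma\,\|(\varphi_n(t)-\varphi(t))_{|U}\|_{\ell}\le\gamma\,\|\varphi_n(t)-\psi(t)\|_{\mathcal C^{\ell}(\tilde U)},$$
which tends to $0$ uniformly in $t$.

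It remains to check that $(q,v)$ is admissible and optimal. By Remark~\ref{rempb1_unknowns}, $q\in H^1_{q_0}(0,1;M)$ and $\dot q(t)=\xi_{q(t)}v(t)$ for a.e.\ $t$. For the constraint, fix $y^*\in Y^*$ and $\mu\in L^\infty(0,1)$: since $q_n\to q$ uniformly with range in a compact subset of $M$ on which $q\mapsto C_q$ is (uniformly) continuous, $\mu(\cdot)\,C^*_{q_n(\cdot)}y^*\to\mu(\cdot)\,C^*_{q(\cdot)}y^*$ strongly in $L^2(0,1;V^*)$, and combined with $v_n\rightharpoonup v$ this gives
$$\int_0^1\mu(t)\,\langle y^*,C_{q_n(t)}v_n(t)\rangle_{Y^*,Y}\,dt\ \longrightarrow\ \int_0^1\mu(t)\,\langle y^*,C_{q(t)}v(t)\rangle_{Y^*,Y}\,dt .$$
The left-hand side vanishes for all $n$, so the right-hand side vanishes for all $\mu$ and $y^*$; as $t\mapsto C_{q(t)}v(t)$ is Bochner measurable, hence essentially separably valued, this forces $C_{q(t)}v(t)=0$ for a.e.\ $t$, so $(q,v)$ is admissible. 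Lastly, the squared norm of $L^2(0,1;V)$ is weakly lower semicontinuous and $g$ is lower semicontinuous with $q_n(1)\to q(1)$, whence $J_1(q,v)\le\liminf_n J_1(q_n,v_n)=\inf J_1$, and $(q,v)$ is a solution.

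The main obstacle is the second paragraph, namely the weak-to-strong continuity of the flow (equivalently state) map $v\mapsto q_v$. It genuinely uses both hypotheses: the compact support of $q_0$ to confine the relevant part of every flow to a fixed compact $\tilde U$ and to reduce the action to restrictions to $\tilde U$, and the regularity $\mathcal C^{\ell+1}$ rather than $\mathcal C^\ell$ to recover, through Arzelà--Ascoli, the compactness in $\mathcal C^\ell(\tilde U)$ that is lost under the continuous but noncompact inclusion $V\hookrightarrow\mathcal C^\ell_0(\R^d,\R^d)$. The only remaining mild technical point is the standard measure-theoretic argument passing from ``$\int_0^1\mu\,\langle y^*,C_qv\rangle=0$ for all $\mu\in L^\infty$ and $y^*\in Y^*$'' to ``$C_qv=0$ a.e.''.
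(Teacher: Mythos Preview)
Your proof is correct and follows essentially the same direct-method strategy as the paper's: bounded minimizing sequence, weak compactness in $L^2(0,1;V)$, Arzel\`a--Ascoli on the flows using the extra $\mathcal{C}^{\ell+1}$ derivative, compact support of $q_0$ to transfer flow convergence to state convergence, weak continuity for the constraint, and weak lower semicontinuity for the cost. The only cosmetic difference is that the paper isolates the flow-convergence step as a separate lemma (Lemma~\ref{seqcomp}) proving $\mathcal{C}^\ell$-convergence on \emph{every} compact via a compact exhaustion and diagonal extraction, whereas you work directly on a single compact $\tilde U$ determined by $q_0$; your version is slightly more economical for this theorem, while the paper's yields a reusable statement invoked again in Section~\ref{approx}.
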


%The representation of shape analysis problems  is however not completely satisfying from the practical point of view. 
In practice one does not usually have available a convenient, functional definition of the space $V$ of vector fields. The RKHS $V$ is in general only known through its kernel $K$, as already mentioned in Section \ref{sec2.1} (and the kernel is often a Gaussian one). Hence Problem \ref{diffprob1}, formulated as such, is not easily tractable since one might not have a good knowledge (say, a parametrization) of the space $V$.

One can however derive, under a slight additional assumption, a different formulation of Problem \ref{diffprob1} that may be more convenient and appropriate in view of practical issues. This is done in the next section, in which our aim is to obtain an optimal control problem only depending on the knowledge of the reproducing kernel $K$ of the space $V$ (and not directly on $V$ itself), the solutions of which can be lifted back to the group of diffeomorphisms.

\paragraph{Kernel formulation of the optimal control problem.}
For a given $q\in M$, consider the transpose $\xi_q^*:X^*\rightarrow V^*$ of the continuous linear mapping $\xi_q:V\rightarrow X$. This means that for every $u\in X^*$ the element $\xi_q^*u\in V^*$ (sometimes called pullback) is defined by $\langle \xi_q^*u,v\rangle_{V^*,V}=\langle u,\xi_q(v)\rangle_{X^*,X}$, for every $v\in V$.
Besides, by definition of $K_V$, there holds $\langle \xi_q^*u,v\rangle_{V^*,V}=(K_V\xi_q^*u,v)_V$.
The mapping $(q,u)\in M\times X^*\mapsto \xi_q^*u\in V^*$ is often called the \textit{momentum map} in control theory \cite{MRT}.

We start our discussion with the following remark. As seen in Example \ref{example1}, we observe that, in general, given $q\in M$ the mapping $\xi_q$ is far from being injective (i.e., one-to-one). Its null space $\ker(\xi_q)$ can indeed be quite large, with many possible time-dependent vector fields $v$  generating the same solution of $q(0)=q_0$ and $\dot{q}(t)=\xi_{q(t)} v(t)$ for almost every $t\in[0,1]$.

A usual way to address this overdetermination consists of selecting, at every time $t$, a $v(t)$ that has minimal norm subject to $\xi_{q(t)} v(t)=\dot{q}(t)$ (resulting in a least-squares problem).
This is the object of the following lemma.

\begin{lemma}\label{lemmemoindrecarre}
Let $q\in M$. Assume that $\Img(\xi_q)=\xi_q(V)$ is closed. Then, for every $v\in V$ there exists $u\in X^*$ such that $\xi_q v = \xi_q K_V \xi_q^*u$. Moreover, the element $K_V \xi_q^*u \in V$ is the one with minimal norm over all elements $v'\in V$ such that $\xi_q v' =\xi_q v$.
\end{lemma}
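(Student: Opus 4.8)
The plan is to reduce the statement to the standard least-squares argument in the Hilbert space $V$, the only non-formal ingredient being an application of the closed range theorem to upgrade a density statement into an honest equality of ranges.

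First I would observe that the admissible set $\{v'\in V : \xi_q v'=\xi_q v\}$ is exactly the affine subspace $v+\ker(\xi_q)$, and that $\ker(\xi_q)$ is a closed subspace of $V$. Hence this affine subspace contains a unique element of minimal norm, namely $v_1$, where $v=v_1+v_0$ is the orthogonal decomposition with $v_1\in(\ker\xi_q)^{\perp}$ and $v_0\in\ker(\xi_q)$; indeed, for any $v'\in v+\ker(\xi_q)$ one has $v'-v_1\in\ker(\xi_q)$, so $\Vert v'\Vert_V^2=\Vert v_1\Vert_V^2+\Vert v'-v_1\Vert_V^2\geq\Vert v_1\Vert_V^2$. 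It therefore suffices to prove that $v_1$ can be written as $K_V\xi_q^{*}u$ for some $u\in X^{*}$: then $\xi_q v=\xi_q v_1=\xi_q K_V\xi_q^{*}u$, and the minimality claim follows from what was just said.

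Next I would identify $(\ker\xi_q)^{\perp}$ with $\Img(K_V\xi_q^{*})$. Using the definitions of the transpose $\xi_q^{*}$ and of $K_V$, for $w\in V$ one has $w\in\ker(\xi_q)$ iff $\langle u,\xi_q w\rangle_{X^{*},X}=0$ for all $u\in X^{*}$, iff $(K_V\xi_q^{*}u,w)_V=\langle\xi_q^{*}u,w\rangle_{V^{*},V}=0$ for all $u\in X^{*}$, i.e. iff $w\perp\Img(K_V\xi_q^{*})$. Thus $\ker(\xi_q)=\Img(K_V\xi_q^{*})^{\perp}$, whence $(\ker\xi_q)^{\perp}=\overline{\Img(K_V\xi_q^{*})}$. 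Here the hypothesis enters: since $\Img(\xi_q)$ is closed in $X$, the closed range theorem gives that $\Img(\xi_q^{*})$ is closed in $V^{*}$, and since $K_V:V^{*}\to V$ is an isometric isomorphism, $\Img(K_V\xi_q^{*})=K_V(\Img\xi_q^{*})$ is closed in $V$. Combining, $\Img(K_V\xi_q^{*})=(\ker\xi_q)^{\perp}$, so $v_1\in\Img(K_V\xi_q^{*})$, which finishes the argument.

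The hard part is precisely this last point, the passage from $\overline{\Img(K_V\xi_q^{*})}$ to $\Img(K_V\xi_q^{*})$: since $X$ is only a Banach space, the Hilbert-space duality shortcuts do not apply and one genuinely needs the Banach closed range theorem. If one prefers to keep the argument self-contained, the alternative is to note that $\xi_q$ restricts to a continuous bijection of the Hilbert space $(\ker\xi_q)^{\perp}$ onto the closed --- hence Banach --- subspace $\Img(\xi_q)$ of $X$, which by the open mapping theorem is a topological isomorphism, and then to produce $u$ by extending via Hahn--Banach the bounded functional $\xi_q w'\mapsto(v_1,w')_V$ from $\Img(\xi_q)$ to all of $X$. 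Everything else is the routine orthogonal projection in $V$.
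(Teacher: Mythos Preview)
Your proof is correct and follows essentially the same route as the paper: project onto the closed affine set $v+\ker(\xi_q)$ to get the minimal-norm element in $(\ker\xi_q)^\perp$, then invoke the Banach closed-range theorem to identify $(\ker\xi_q)^\perp$ with $K_V\Img(\xi_q^*)=\Img(K_V\xi_q^*)$. You simply spell out in more detail the duality computation $\ker(\xi_q)=\Img(K_V\xi_q^*)^\perp$ and offer a nice self-contained alternative via the open mapping theorem and Hahn--Banach, but the core argument is the same.
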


\begin{proof}
Let $\hat v$ denote the orthogonal projection of 0 on the space $A = \{v': \xi_q v' = \xi_q v\}$, i.e., the element of $A$ with minimal norm. Then $\hat v$ is characterized by $\xi_q \hat v = \xi_q v$ and $\hat v \in \ker(\xi_q)^\perp$. Using the Banach closed-range theorem, we have $(\ker(\xi_q))^\perp = K_V \Img(\xi_q^*)$, so that there exists $u\in X^*$ such that $\hat v = K_V \xi_q^*u$, and hence $\xi_q v = \xi_q K_V \xi_q^* u$. 
\end{proof}

\begin{remark} Note that the latter assertion in the proof does not require $\Img(\xi_q)$ to be closed, since we always have $K_V(\Img(\xi_q))\subset (\ker(\xi_q)^\perp)$.
\end{remark}

Whether $\Img(\xi_q)=\xi_q(V)$ is closed or not, this lemma and the previous discussion suggest replacing the control $v(t)$ in Problem \ref{diffprob1} by $u(t)\in X^*$ such that $v(t)=K_V \xi_{q(t)}^*u(t)$. Plugging this expression into the system $\dot{q}(t)=\xi_{q(t)} v(t)$ leads to the new control system $\dot{q}(t)=K_{q(t)} u(t)$, where
\begin{equation}\label{Kq}
K_q=\xi_qK_V\xi_q^*,
\end{equation}
for every $q\in M$.
The operator $K_q:X^*\rightarrow X$ is continuous and symmetric (i.e., $\langle u_2,K_qu_1\rangle_{X^*,X}=\langle u_1,K_qu_2\rangle_{X^*,X}$ for all $(u_1,u_2)\in(X^*)^2$), satisfies $\langle u, K_qu\rangle_{X^*,X}=\Vert K_V\xi_q^*u\Vert_V^2$ for every $u\in X$ and thus is positive semi-definite, and $(q,u)\mapsto K_qu$ is as regular as $(q,v)\mapsto \xi_qv$. 
Note that $K_q(X^*)=\xi_q(V)$ whenever $\xi_q(V)$ is closed.

This change of variable appears to be particularly relevant since the operator $K_q$ is usually easy to compute from the reproducing kernel $K_V$ of $V$, as shown in the following examples.

\begin{example}
Let $M=X=\mathcal{C}^0(S,\R^d)$ be the set of continuous mappings from a Riemannian manifold $S$ to $\R^d$. The action of $\text{Diff}^0(\R^d)$ is smooth of order $0$, with $\xi_qv=v\circ q$ (see Example \ref{example1}).
Let $V$ be an RKHS of vector fields of class $\mathcal{C}^1_0$ on $\R^d$, with reproducing kernel $K$. 
Every $u\in X^*$ can be identified with a vector-valued Radon measure on $S$. Then
$$
\langle \xi_q^*u, v\rangle_{V^*,V}=\langle u, v\circ q\rangle_{X^*,X}= \int_Sv(q(s))^Tdu(s),
$$
for every $q\in M$ and for every $v\in V$. In other words, one has
$\xi_q^*u = \int_S du(s)\otimes\delta_{q(s)}$, and therefore, by definition of the kernel, we have $K_V\xi_q^*u = \int_S K_V(du(s)\otimes\delta_{q(s)}) = \int_S K(\cdot,q(s))\, du(s)$. We finally infer that
$$
K_qu(t)=\int_S K(q(t),q(s))\, du(s).
$$
\end{example}

\begin{example}
Let $X=(\R^d)^n$ and $M=\mathrm{Lmk}_d(n)$ (as in Example \ref{example1}).
Then $\xi_{q}v=(v(x_1),\dots,v(x_n))$, and every $u=(u_1,\dots,u_n)$ is identified with a vector of $X$ by $\langle u, w\rangle_{X^*,X}=\sum_{j=1}^nu_j^Tw_j$. Therefore, we get $\xi_q^*u=\sum_{j=1}^nu_j\otimes\delta_{x_j}$, and
$K_V\xi_q^*u=\sum_{j=1}^nK(x_j,\cdot)u_j$. It follows that 
$$K_qu=\left(\sum_{j=1}^nK(x_1,x_j)u_j,\sum_{j=1}^nK(x_2,x_j)u_j,\dots,\sum_{j=1}^nK(x_n,x_j)u_j \right) .$$
In other words, $K_q$ can be identified with matrix of total size $nd\times nd$ and made of square block matrices of size $d$, with the block $(i,j)$ given by $K(x_i,x_j)$.
\end{example}

Following the discussion above and the change of control variable $v(t)=K_V \xi_{q(t)}^*u(t)$, we are led to consider the following optimal control problem.

\begin{problem}\label{diffprob2}
Let $q_0\in M$, and let $C:M\times V\rightarrow Y$ be a mapping such that $v\mapsto C(q,v)=C_qv$ is linear for every $q\in M$. 
Let $g:M\rightarrow\R$ be a function.
We consider the problem of minimizing the functional
\begin{equation}\label{defJ2}
J_2(q,u)=\frac{1}{2}\int_0^1\langle u(t), K_{q(t)}u(t)\rangle_{X^*,X}dt+g(q(1))=E(q(t))+g(q(1))
\end{equation}
over all couples $(q(\cdot),u(\cdot))$, where $u:[0,1]\rightarrow X^*$ is a measurable function and $q(\cdot)\in W^{1,1}_{q_0}(0,1;M)$ are such that $\dot{q}(t)=K_{q(t)} u(t)$ and $C_{q(t)}K_V\xi_{q(t)}^*u(t)=0$ for almost every $t\in [0,1]$.
\end{problem}

The precise relation between both problems is clarified in the following result.

\begin{proposition}\label{proposition1}
Assume that $\ker(\xi_q)\subset\ker (C_q)$ and that $\Img(\xi_q)=\xi_q(V)$ is closed, for every $q\in M$.
Then Problems \ref{diffprob1} and \ref{diffprob2} are equivalent in the sense that $\inf J_1 = \inf J_2$ over their respective sets of constraints.

Moreover, if $(\bar q(\cdot),\bar u(\cdot))$ is an optimal solution of Problem \ref{diffprob2}, then $(\bar q(\cdot),\bar v(\cdot))$ is an optimal solution of Problem \ref{diffprob1}, with  $\bar v(\cdot)=K_V\xi^*_{\bar q(\cdot)}\bar u(\cdot)$ and $\bar q(\cdot)$ the corresponding curve defined by $\bar q(0)=q_0$ and $\dot{\bar q}(t)=K_{\bar q(t)} \bar u(t)$ for almost every $t\in[0,1]$.
Conversely, if $(\bar q(\cdot),\bar v(\cdot))$ is an optimal solution of Problem \ref{diffprob1} then there exists a measurable function $\bar u:[0,1]\rightarrow X^*$ such that $\bar v(\cdot)=K_V\xi^*_{\bar q(\cdot)}\bar u(\cdot)$, and $\bar u(\cdot)$ and $(\bar q(\cdot),\bar u(\cdot))$ is an optimal solution of Problem \ref{diffprob2}.
\end{proposition}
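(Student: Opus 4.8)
The plan is to build the equivalence directly from the change of control variable $v = K_V\xi_q^* u$ together with the identity $\langle u,K_qu\rangle_{X^*,X} = \Vert K_V\xi_q^*u\Vert_V^2$ established just before the statement, which make $J_1$ and $J_2$ coincide under this substitution; I would then prove the two inclusions between the admissible sets separately to obtain $\inf J_1 = \inf J_2$, and finally read off the correspondence of minimizers.

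For $\inf J_1\le\inf J_2$: given $(q,u)$ admissible for Problem \ref{diffprob2} with $J_2(q,u)<\infty$, set $v(t) = K_V\xi_{q(t)}^*u(t)$. This $v$ is measurable (since $(q,u)\mapsto K_V\xi_q^*u$ is continuous and $q,u$ are measurable) and lies in $L^2(0,1;V)$ because $\int_0^1\Vert v(t)\Vert_V^2\,dt = \int_0^1\langle u(t),K_{q(t)}u(t)\rangle_{X^*,X}\,dt = 2E(q)<\infty$; moreover $\xi_{q(t)}v(t) = \xi_{q(t)}K_V\xi_{q(t)}^*u(t) = K_{q(t)}u(t) = \dot q(t)$ and $C_{q(t)}v(t) = C_{q(t)}K_V\xi_{q(t)}^*u(t) = 0$ for a.e.\ $t$. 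Hence $(q,v)$ is admissible for Problem \ref{diffprob1} with $J_1(q,v) = J_2(q,u)$, and the inequality of infima follows. This direction uses neither the closedness of $\Img(\xi_q)$ nor the inclusion $\ker(\xi_q)\subset\ker(C_q)$.

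For $\inf J_2\le\inf J_1$: given $(q,v)$ admissible for Problem \ref{diffprob1}, let $\hat v(t)$ be the orthogonal projection of $0$ onto the closed affine subspace $v(t)+\ker(\xi_{q(t)})$ of $V$. Lemma \ref{lemmemoindrecarre} — this is where closedness of $\Img(\xi_{q(t)})$ enters — yields $u(t)\in X^*$ with $\hat v(t) = K_V\xi_{q(t)}^*u(t)$ and $\Vert\hat v(t)\Vert_V\le\Vert v(t)\Vert_V$. Then $(q,u)$ is admissible for Problem \ref{diffprob2} and $J_2(q,u)\le J_1(q,v)$, by three routine checks: the dynamics, $\dot q(t) = \xi_{q(t)}v(t) = \xi_{q(t)}\hat v(t) = K_{q(t)}u(t)$; the constraint, using that $v(t)-\hat v(t)\in\ker(\xi_{q(t)})\subset\ker(C_{q(t)})$ and $C_{q(t)}$ is linear, so $C_{q(t)}\hat v(t) = C_{q(t)}v(t) = 0$; and the cost, $\int_0^1\langle u,K_qu\rangle_{X^*,X}\,dt = \int_0^1\Vert\hat v\Vert_V^2\,dt\le\int_0^1\Vert v\Vert_V^2\,dt$. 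The one delicate point — and the main obstacle in the whole argument — is to show that $u$ can be chosen measurable in $t$: I would first argue that $t\mapsto\hat v(t)$ is measurable, because the multifunction $t\mapsto v(t)+\ker(\xi_{q(t)})$ has closed convex values and measurable graph (as $(t,w)\mapsto\xi_{q(t)}w-\dot q(t)$ is measurable in $t$ and continuous in $w$), so its pointwise minimal-norm selection is measurable; and then select $u(t)$ with $K_V\xi_{q(t)}^*u(t) = \hat v(t)$ by the same type of Carath\'eodory/measurable-selection argument applied to $t\mapsto\{u\in X^*:K_V\xi_{q(t)}^*u = \hat v(t)\}$. Everything else here is algebraic bookkeeping; taking infima over Problem \ref{diffprob1} gives $\inf J_2\le\inf J_1$, hence equality.

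The correspondence of optimizers is then immediate from $\inf J_1 = \inf J_2$. If $(\bar q,\bar u)$ is optimal for Problem \ref{diffprob2}, the first construction produces $\bar v = K_V\xi^*_{\bar q}\bar u$ with $(\bar q,\bar v)$ admissible for Problem \ref{diffprob1} and $J_1(\bar q,\bar v) = J_2(\bar q,\bar u) = \inf J_2 = \inf J_1$, so $(\bar q,\bar v)$ is optimal. Conversely, if $(\bar q,\bar v)$ is optimal for Problem \ref{diffprob1}, the second construction produces a measurable $\bar u$ with $(\bar q,\bar u)$ admissible for Problem \ref{diffprob2} and $J_2(\bar q,\bar u)\le J_1(\bar q,\bar v) = \inf J_1 = \inf J_2$, so $(\bar q,\bar u)$ is optimal; finally, since then $\int_0^1\Vert\hat{\bar v}\Vert_V^2\,dt = \int_0^1\Vert\bar v\Vert_V^2\,dt$ while $\Vert\hat{\bar v}(t)\Vert_V\le\Vert\bar v(t)\Vert_V$ a.e., and the minimal-norm element of a closed affine subspace of a Hilbert space is unique, I conclude $\bar v(t) = \hat{\bar v}(t) = K_V\xi^*_{\bar q(t)}\bar u(t)$ for a.e.\ $t$, which is exactly the asserted relation.
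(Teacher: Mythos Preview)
Your argument is correct and follows essentially the same route as the paper's proof: both directions rely on the substitution $v=K_V\xi_q^*u$ and on projecting $v$ onto $(\ker\xi_q)^\perp$ via Lemma~\ref{lemmemoindrecarre}, with the kernel inclusion $\ker(\xi_q)\subset\ker(C_q)$ ensuring the constraint is preserved. You are actually more careful than the paper on two points it leaves implicit --- the measurable selection of $u(t)$, and the final identification $\bar v=\hat{\bar v}$ via uniqueness of the minimal-norm element --- while the paper simply asserts measurability of the decomposition and declares the correspondence of minimizers ``obvious''.
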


\begin{proof}
First of all, if $J_2(q,u)$ is finite, then $v(\cdot)$ defined by $v(t)=K_V\xi_{q(t)}u(t)$ belongs to $L^2(0,1;V)$ and therefore, using the differential equation $\dot{q}(t)=\xi_{q(t)}v(t)$ for almost every $t$ and the Gronwall lemma, we infer that $q\in H^1_{q_0}(0,1;M)$. The inequality $\inf J_1 \leq \inf J_2$ follows obviously.

Let us prove the converse. Let $\varepsilon>0$ arbitrary, and let $v(\cdot)\in L^2(0,1;V)$ and $q\in H^1_{q_0}(0,1;M)$ be such that $J_1(q,v)\leq \inf J_1+\varepsilon$, with $\dot{q}(t)=\xi_{q(t)}v(t)$ and $C_{q(t)}v(t)=0$ for almost every $t\in[0,1]$. We can write $v(t)=v_1(t)+v_2(t)$ with $v_1(t)\in\ker(\xi_{q(t)})$ and $v_2(t)\in (\ker(\xi_{q(t)}))^\perp=\Img(K_V\xi_{q(t)}^*)$, for almost every $t\in[0,1]$, with $v_1(\cdot)$ and $v_2(\cdot)$ measurable functions, and obviously one has $\int_0^T\Vert v_2(t)\Vert_V^2\, dt \leq \int_0^T\Vert v(t)\Vert_V^2\, dt$. Then, choosing $u(\cdot)$ such that $v_2(\cdot)=K_V\xi_{q(\cdot)}^* u(\cdot)$, it follows that $J_2(u)=J_1(v_2)\leq J_1(v)\leq\inf J_1+\varepsilon$. Therefore $\inf J_2\leq\inf J_1$. The rest is obvious.
%The proof easily follows from the fact that $\overline{\mathrm{Im}\,\xi_q} = (\ker K_V\xi_q^*)^\perp = (\ker \xi_qK_V\xi_q^*)^\perp = \overline{\mathrm{Im}\,\xi_q K_V \xi_q^*}$.
\end{proof}

\begin{remark}
Under the assumptions of Proposition \ref{proposition1} and of Theorem \ref{existence}, Problem \ref{diffprob2} has at least one solution $\bar u(\cdot)$, there holds $\min J_1 = \min J_2$, and the minimizers of Problems \ref{diffprob1} and \ref{diffprob2} are in one-to-one correspondance according to the above statement.
\end{remark}

\begin{remark}
The assumption $\ker(\xi_q)\subset\ker (C_q)$ is satisfied in the important case where the constraints are kinetic, and is natural to be considered since it means that, in the problem of overdetermination in $v$, the constraints can be passed to the quotient (see Lemma \ref{lemmemoindrecarre}).
Actually for kinetic constraints we have the following interesting result (proved further, see Remark \ref{rem_propequiv}), completing the discussion on the equivalence between both problems.
\end{remark}

\begin{proposition}\label{equiv}
Assume that $V$ is an RKHS of vector fields of class at least $\mathcal{C}^{\ell+1}_0$ on $\R^d$, that the constraints are kinetic, i.e., are of the form $C_q\xi_qv=0$, and that the mapping $(q,w)\mapsto C_qw$ is of class $\mathcal{C}^1$. 
If $C_q\xi_q$ is surjective (onto) for every $q\in M$, then for every optimal solution $\bar v$ of Problem \ref{diffprob1} there exists a measurable function $\bar u:[0,1]\rightarrow X^*$ such that $\bar v=K_V\xi^*_{\bar q}\bar u$, and $\bar u$ is an optimal solution of Problem \ref{diffprob2}.
\end{proposition}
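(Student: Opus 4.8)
The plan is to reduce this statement to Proposition \ref{proposition1} by checking its two hypotheses under the assumptions made here, namely that $V$ is of class $\mathcal{C}^{\ell+1}_0$, that the constraint is kinetic, $C_q\xi_q v=0$, with $(q,w)\mapsto C_qw$ of class $\mathcal{C}^1$, and that $C_q\xi_q$ is surjective for every $q\in M$. The first hypothesis of Proposition \ref{proposition1}, that $\ker(\xi_q)\subset\ker(C_q\xi_q)$ (which is the correct reading of "$\ker(\xi_q)\subset\ker(C_q)$" in the kinetic setting, as noted in the remark preceding the statement), is immediate: if $\xi_q v=0$ then $C_q\xi_q v=0$. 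So the only real work is the closedness of $\Img(\xi_q)$.

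The key step is therefore: \textbf{surjectivity of $C_q\xi_q:V\to Y$ forces $\Img(\xi_q)$ to be closed.} Here is how I would argue it. Write $T=\xi_q:V\to X$ and $L=C_q:X\to Y$, so $L$ is bounded linear (by the $\mathcal{C}^1$ assumption on $(q,w)\mapsto C_qw$, fixing $q$) and $LT$ is onto $Y$. Decompose $V=\ker(T)\oplus\ker(T)^\perp$ and let $T_0$ be the restriction of $T$ to $W:=\ker(T)^\perp$, so $T_0$ is injective with the same range as $T$. The surjectivity of $LT=LT_0$ gives a bounded right inverse of $LT_0$ by the open mapping theorem: there is $R:Y\to W$ bounded with $LT_0R=\mathrm{id}_Y$; in particular $\|y\|_Y\leq \|L\|\,\|T_0 Rw\|\cdot(\dots)$, which says that on the finite-codimensional-in-range complement controlled by $R$, $T_0$ is bounded below. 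More precisely, I would split $W$ (or rather $\Img T_0$) using the continuous projection $\Pi=T_0R\,L$ on $X$ onto $\Img(T_0 R)\subset \Img(T)$: for $x=T_0 v\in\Img(T)$, one has $LTv=LT_0v$, and $y:=LT_0 v\in Y$, so $T_0 R y$ is a well-defined element of $\Img(T)$ with $L(T_0Ry)=y=L(T_0v)$. Thus $T_0 v - T_0 R(LT_0 v)\in \ker(L)\cap\Img(T)$. This exhibits $\Img(T)$ as the algebraic sum $\Img(T_0R)+(\ker(L)\cap\Img(T))$, where the first summand is closed (it is the range of the bounded operator $T_0R$ on $Y$, and $T_0R$ is bounded below since $LT_0R=\mathrm{id}$ forces $\|y\|\leq\|L\|\|T_0Ry\|$). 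The second summand $\ker(L)\cap\Img(T)$ I would like to be closed too — and this is exactly where one expects to need it to be a single point or finite-dimensional, which is \emph{not} generally true. So the clean argument is instead the following.

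The honest argument: \textbf{use the closed graph / bounded-below criterion directly via $LT_0$.} Since $LT_0:W\to Y$ is a continuous surjection between Banach spaces, the open mapping theorem gives $c>0$ with: for every $y\in Y$ there is $v\in W$, $LT_0v=y$, $\|v\|_V\leq c\|y\|_Y$. Now I claim $\Img(\xi_q)=\Img(T)$ is closed; equivalently (Banach closed-range theorem) that $T_0$ is bounded below on a complement, OR more simply I would verify closedness of $\Img(T)$ directly by showing $T_0$ is bounded below. But in general $\xi_q$ need \emph{not} be bounded below, so closedness of $\Img(\xi_q)$ does \emph{not} follow from kinetic-constraint surjectivity alone. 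Hence I expect the actual proof to avoid Proposition \ref{proposition1} and argue as follows: given an optimal $\bar v$ of Problem \ref{diffprob1}, decompose $\bar v(t)=\bar v_1(t)+\bar v_2(t)$ pointwise into $\ker(\xi_{\bar q(t)})$ and its orthogonal, with $\bar v_1,\bar v_2$ measurable (measurable selection of the orthogonal projection, using continuity of $(q,v)\mapsto\xi_q v$); since the constraint is kinetic, $C_{\bar q(t)}\xi_{\bar q(t)}\bar v_2(t)=C_{\bar q(t)}\xi_{\bar q(t)}\bar v(t)=0$ and $\xi_{\bar q(t)}\bar v_2(t)=\xi_{\bar q(t)}\bar v(t)=\dot{\bar q}(t)$, while $\|\bar v_2(t)\|_V\leq\|\bar v(t)\|_V$; optimality forces $\bar v_1=0$ a.e., so $\bar v(t)\in\ker(\xi_{\bar q(t)})^\perp=\overline{\Img(K_V\xi^*_{\bar q(t)})}$ a.e. The remaining point is to produce a \emph{measurable} $\bar u(t)\in X^*$ with $\bar v(t)=K_V\xi^*_{\bar q(t)}\bar u(t)$: here one does \emph{not} have $\Img(\xi_q)$ closed, but one does not need it — one needs $\bar v(t)$ to lie in the \emph{range} $\Img(K_V\xi^*_{\bar q(t)})$ (not merely its closure). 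This is where the surjectivity of $C_q\xi_q$ re-enters: surjectivity of $C_q\xi_q$ implies $\Img(\xi_q^*)$ is closed (transpose of a surjection — Banach closed-range theorem applied to $\xi_q$, noting $LT$ onto forces $T^*$ to have closed range hence $T$ has closed range after all — wait, $T^*L^*$ injective with closed range gives $T^*$ closed range), hence $\Img(K_V\xi_q^*)=\ker(\xi_q)^\perp$ is closed and, more to the point, $K_V\xi_q^*$ maps \emph{onto} $\ker(\xi_q)^\perp$; so $\bar v(t)\in\ker(\xi_{\bar q(t)})^\perp=\Img(K_V\xi^*_{\bar q(t)})$ genuinely, and a measurable right inverse of $K_V\xi^*_{\bar q(t)}$ (again by a measurable selection theorem, the map being continuous in $t$ through $\bar q$) yields the desired measurable $\bar u$.

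The main obstacle I anticipate is precisely the interplay between "closed range" and "surjectivity onto the orthogonal complement": one must carefully invoke the closed-range theorem in the form that surjectivity of $C_q\xi_q$ upgrades to closedness of $\Img(\xi_q)$ (via $\Img(\xi_q^*)$ closed $\iff$ $\Img(\xi_q)$ closed), so that the earlier equivalence machinery (Lemma \ref{lemmemoindrecarre} and Proposition \ref{proposition1}) actually applies; and then the measurable-selection argument producing $\bar u(\cdot)$ from the pointwise-defined right inverses, which requires checking that the relevant set-valued map $t\mapsto (K_V\xi^*_{\bar q(t)})^{-1}(\{\bar v(t)\})$ is measurable with closed nonempty values — routine given the continuity of $q\mapsto\xi_q$ and $\bar q\in H^1$, but worth stating explicitly. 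Modulo these two points, the result follows by combining the pointwise norm-reduction/optimality argument with Proposition \ref{proposition1}.
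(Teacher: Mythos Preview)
Your approach has a genuine gap. The central claim you rely on --- that surjectivity of $C_q\xi_q$ forces $\Img(\xi_q)$ to be closed --- is false, and the paper explicitly notes (just after the statement of the proposition) that the result does \emph{not} require $\Img(\xi_q)$ to be closed. Your attempted justification, ``$T^*L^*$ injective with closed range gives $T^*$ closed range'', does not hold: $T^*L^*$ being bounded below only controls $T^*$ on the subspace $\Img(L^*)\subset X^*$, not on all of $X^*$. A concrete counterexample is $V=X=\ell^2$, $T(e_n)=\tfrac{1}{n}e_n$ (so $\Img(T)$ is dense but not closed), $Y=\R$, $L(x)=x_1$; then $LT$ is surjective while $\Img(T)$ is not closed. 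Consequently your reduction to Proposition~\ref{proposition1} collapses, and your fallback argument --- $\bar v(t)\in\ker(\xi_{\bar q(t)})^\perp$ by optimality, then $\ker(\xi_q)^\perp=\Img(K_V\xi_q^*)$ --- fails at the second equality, which is exactly the closed-range statement you cannot establish.

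The paper's proof takes a completely different route: it applies the Pontryagin maximum principle (Theorem~\ref{PMP}), whose only structural hypothesis is surjectivity of the constraint map $V\to Y$ --- here $v\mapsto C_q\xi_q v$ --- which is precisely what is assumed. Theorem~\ref{geodeq} then gives the optimal control explicitly as $v=K_V\big(\xi_q^*p-(C_q\xi_q)^*\lambda\big)$ with $p\in H^1(0,1;X^*)$ and $\lambda\in L^2(0,1;Y^*)$. Because the constraint is kinetic, $(C_q\xi_q)^*=\xi_q^*C_q^*$, so this factors as $v=K_V\xi_q^*\big(p-C_q^*\lambda\big)$, and one simply sets $\bar u=p-C_q^*\lambda$, which is automatically measurable (indeed square-integrable). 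No closed-range argument and no measurable selection are needed; the representation $\bar v=K_V\xi_{\bar q}^*\bar u$ drops out of the Hamiltonian structure. This is the content of Remark~\ref{rem_propequiv}.
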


Note that this result does not require the assumption that $\Img(\xi_q)=\xi_q(V)$ be closed.

\begin{remark}
It may happen that Problems \ref{diffprob1} and \ref{diffprob2} do not coincide whenever $\Img(\xi_q)$ is not closed.
Actually, if the assumption that $\Img(\xi_q)$ is closed is not satisfied then it may happen  that the set of controls satisfying the constraints in Problem \ref{diffprob2} be reduced to the zero control. 

Let us provide a situation where this occurs. Let $v(q)\in \overline{\Img(K_V\xi_q^*)}\setminus\Img(K_V\xi_q^*)$ with $\Vert v(q)\Vert_V=1$. In particular $v(q)\in (\ker(\xi_q))^\perp$.
Assume that $C_q$ is defined as the orthogonal projection onto $(\R v(q)\oplus\ker(\xi_q))^\perp=v(q)^\perp\cap(\ker(\xi_q))^\perp$. Then $\ker (C_q) = \R v(q)\oplus\ker(\xi_q)$.
We claim that $\ker (C_qK_V\xi_q^*)=\{0\}$. Indeed, let $u\in X^*$ be such that $C_qK_V\xi_q^*u=0$. Then on the one part $K_V\xi_q^*u\in \ker (C_q)$, and on the other part, $K_V\xi_q^*u\in\Img(K_V\xi_q^*)\subset (\ker(\xi_q))^\perp$. Therefore $K_V\xi_q^*u\in \ker (C_q)\cap (\ker(\xi_q))^\perp=\R v(q)$, but since $v(q)\notin \Img(K_V\xi_q^*)$, necessarily $u=0$.
\end{remark}

\subsection{Further comments: lifted shape spaces and multishapes}\label{mlmk}
In this section we provide one last way to study shape spaces and describe two interesting and important variants of shape spaces, namely lifted shape spaces and multishapes. We show that a slightly different optimal control problem can  model the shape deformation problem in these spaces.

\paragraph{Lifted shape spaces.}
Lifted shapes can be used to keep track of additional parameters when studying the deformation of a shape. For example, when studying $n$ landmarks $(x_1,\dots,x_n)$ in $\R^d$, it can be interesting to keep track of how another point $x$ is moved by the deformation.

Let $M$ and $\hat{M}$ be two shape spaces, open subsets of two Banach spaces $X$ and $\hat{X}$ respectively, on which the group of diffeomorphisms of $\R^d$ acts smoothly with respective orders $\ell$ and $\hat{\ell}$. Let $V$ be an RKHS of vector fields in $\R^d$ of class $\mathcal{C}_0^{\max(\ell,\hat{\ell})}$. We denote by $\xi_q$ (respectively $\xi_{\hat{q}}$) the infinitesimal action of $V$ on $M$ (respectively $\hat{M}$).
We assume that there exists a $\mathcal{C}^1$ equivariant submersion $P:\hat{M}\rightarrow M$. 

By equivariant, we mean that $P(\varphi\cdot\hat{q})=\varphi\cdot P(\hat{q})$, for every diffeomorphism $\varphi\in \text{Diff}\, ^{\max(\ell,\hat{\ell}),1}$ and every $\hat{q}\in\hat{M}$. Note that this implies that $dP_{\hat{q}}.\xi_{\hat{q}}=\xi_{q}$ and $\xi_{\hat{q}}^*dP_{\hat{q}}^*=\xi_{q}^*.$

For example, for $n<\hat{n}$, the  projection $P:\mathrm{Lmk}_d(\hat{n})\rightarrow \mathrm{Lmk}_d(n)$ defined by $P(x_1,\dots,x_{\hat{n}})=(x_1,\dots,x_n)$ is a $\mathcal{C}^1$ equivariant submersion. 
More generally, for a compact Riemannian manifold $\hat{S}$ and a submanifold $S\subset \hat{S}$, the restriction mapping $P:\mathrm{Emb}(\hat{S},\R^d)\rightarrow \mathrm{Emb}(S,\R^d)$ defined by $P(q)=q_{\vert S}$ is a $\mathcal{C}^1$ equivariant submersion for the action by composition of $\text{Diff}\,^1(\R^d)$.

\medskip

The constructions and results of Section \ref{modshsp} can be applied to this setting, and in particular the deformation evolution induces a control system on $\hat{M}$, as investigated previously. 

\begin{remark}\label{refapprox}
Let $V$ be an RKHS of bounded vector fields of class $C_0^{\max(\ell,\hat{\ell})+1}$. Let $g$ be a data attachment function on $M$ and let $C$ be a mapping of constraints. We set $\hat{g}=g\circ P$ and $\hat{C}_{\hat{q}}=C_{P(\hat{q})}$. Then a time-dependent vector field $v$ in $V$ is a solution of Problem \ref{diffprob1} for $M$ with constraints $C$ and data attachment $g$ if and only if it is also a solution of Problem \ref{diffprob1} for $\hat{M}$ with constraints $\hat{C}$ and data attachment $\hat{g}$. This remark will be used for finite-dimensional approximations in Section \ref{approx}.
\end{remark}

One can however define a control system of a different form, by lifting the control applied on the smaller shape space $M$ to the bigger shape space $\hat{M}$. 

The method goes as follows. Let $q_0\in M$ and $\hat{q}_0\in P^{-1}(q_0)$. Consider a measurable map $u:[0,1]\rightarrow X^*$ and the corresponding curve $q(\cdot)$ defined  by $q(0)=q_0$ and $\dot{q}(t)=K_{q(t)}u(t)$ for almost every $t\in[0,1]$, where $K_q=\xi_qK_V\xi_q^*$. This curve is the same as the one induced by the time-dependent vector field $v(\cdot)=K_V\xi_{q(\cdot)}^*u(\cdot)$.
The deformation $\varphi$ corresponding to the flow of $v$ defines on $\hat{M}$ a new curve $\hat{q}(t)=\varphi(t)\cdot \hat{q}_0$ with speed
$$
\dot{\hat{q}}(t)=\xi_{\hat{q}}K_V\xi_{q(t)}^*u(t)=K_{\hat{q}(t)}dP_{\hat{q}(t)}^*u(t),
$$
with $K_{\hat{q}}=\xi_{\hat{q}}K_V\xi^*_{\hat{q}}$. Note that $P(\hat{q}(t))=q(t)$ for every $t\in[0,1]$.
We have thus obtained a new class of control problems.

\begin{problem}\label{liftprobmo}
Let $\hat{q}_0\in\hat{M}$, and let $C:\hat{M}\times V\rightarrow Y$ be continuous and linear with respect to the second variable, with $Y$ a Banach space.
Let $g:\hat{M}\rightarrow\R$ be a real function on $\hat{M}$.  
We consider the problem of minimizing the functional
$$
J_3(\hat{q},u)=\frac{1}{2}\int_0^1\langle u(t),K_{P(\hat{q}(t))}u(t)\rangle_{X^*,X} dt+g(\hat{q}(1))
$$
over all $(\hat{q}(\cdot),u(\cdot))$, where $u:[0,1]\rightarrow X^*$ is a measurable function and $\hat{q}(\cdot)\in W^{1,1}_{\hat{q}_0}(0,1;\hat{M})$ are such that $\dot{\hat{q}}(t)=K_{\hat{q}(t)}dP_{\hat{q}(t)}^*u(t)$ and 
$C_{\hat{q}(t)}K_V\xi_{P(\hat{q}(t))}^*u(t)=0$ for almost every $t\in[0,1]$.
\end{problem}

Note that, if $g$ and $C$ only depend on $P(\hat{q})$ then the solutions $u(t)$ of Problem \ref{liftprobmo} coincide with the ones of Problem \ref{diffprob2} on $M$.

Problem \ref{liftprobmo} can be reformulated back into an optimal control problem on $V$ and on $\hat{M}$, similar to Problem \ref{diffprob1}, by adding the constraints $D_{\hat{q}}v=0$ where $D_{\hat{q}}v$ is the orthogonal projection of $v$ on $\ker (\xi_{P(\hat{q})})$.

Some examples of lifted shape spaces can be found in \cite{Y1}, where controls are used from a small number of landmarks to match a large number of landmarks, with additional state variables defining Gaussian volume elements. 
%The corresponding equivariant submersion $P$ is the natural projection $\mathrm{Lmk}_d(\hat{n})\rightarrow \mathrm{Lmk}_d(n)$, with $n<\hat{n}$.
Another application of lifted shape spaces will be mentioned in Section \ref{approx}, where they will be used to approximate infinite-dimensional shape spaces by finite-dimensional ones.

\paragraph{Multishapes.}

Shape analysis problems sometimes involve collections of shapes that must be studied together, each of them with specific properties associated with a different space of vector fields. These situations can be modeled as follows. 

Consider some shape spaces $M_1,\dots,M_k$, open subsets of Banach spaces $X_1,\dots,X_k,$ respectively, on which diffeomorphisms of $\R^d$ acts smoothly on each shape space $M_i$ with order $\ell_i$. Let $k_i\geq 1$, and consider $V_1,\dots,V_k$, RKHS's of vector fields of $\R^d$ respectively of class $\mathcal{C}^{\ell_i+k_i}_0 $ with kernels $K_1,\dots,K_k$, as defined in Section \ref{sec2.1}. 
In such a model we thus get $k$ control systems, of the form $\dot{q}_i(\cdot)=\xi_{i,q_i(\cdot)}v_i(\cdot)$, with the controls $v_i(\cdot)\in L^2(0,1;V_i)$, $i=1,\ldots, k$.
The shape space of a multi-shape is a space of the form $M=M_1\times\dots\times M_k$. Let $q_0=(q_{1,0},\dots,q_{k,0})\in M$. Similarly to the previous section we consider the problem of minimizing the functional
$$
\sum_{i=1}^k\int_0^1\Vert v_i(t)\Vert_{V_i}^2dt +g(q_1(1),\dots,q_k(1)),
$$
over all time-dependent vector fields $v_i(\cdot)\in L^2(0,1;V_i)$, $i=1,\ldots, k$, and with
$q_i(1)=\varphi_i(1)\cdot q_{i,0}$ where $\varphi_i$ is the flow generated by $v_i$ (note that, here, the problem is written without constraint).

As in Section \ref{modshsp}, the kernel formulation of this optimal control problem consists of minimizing the functional
\begin{equation}\label{deffunctionalmultishapes}
\frac{1}{2}\int_0^1\sum_{i=1}^k \langle u_i(t), K_{q_i(t),i}u_i(t)\rangle_{X_i^*,X_i} \, dt+g(q(1)).
\end{equation}
over all measurable functions $u(\cdot)=(u_1(\cdot),\dots,u_k(\cdot))\in L^2(0,1;X_1^*\times\dots\times X_k^*)$, where  the curve $q(\cdot)=(q_1(\cdot),\dots,q_k(\cdot)):[0,1]\rightarrow M$ is the solution of $q(0)=q_0$ and
\begin{equation}\label{contsysmultishape}
\dot{q}(t)=K_{q(t)}u(t)=\left(K_{1,q_1(t)}u_1(t),\dots,K_{k,q_k(t)}u_k(t)\right),
\end{equation}
for almost every $t\in[0,1]$, with $K_{i,q_i}=\xi_{i,q_i}K_{V_i}\xi_{i,q_i}^*$ for $i=1,\ldots,k$. 

Obviously, without any further consideration, studying this space essentially amounts to studying each $M_i$ separately, the only interaction possibly intervening from the final cost function $g$. More interesting problems arise however when the shapes can interact with each other, and are subject to consistency constraints. For example, assume that one studies a cat shape, decomposed into two parts for the body and the tail. Since these parts have very different properties, it makes sense to consider them \textit{a priori} as two distinct shapes $S_1$ and $S_2$, with shape spaces $M_1=\mathcal{C}^0(S_1,\R^3)$ and $M_2=\mathcal{C}^0(S_2,\R^3)$, each of them being associated with RKHS's $V_1$ and $V_2$ respectively.
Then, in order to take account for the tail being attached to the cat's body, the contact point of the body and the tail of the cat must belong to both shapes and be equal. In other words, if $q_1\in M_1$ represents the body and $q_2\in M_2$ the tail, then there must hold $q_1(s_1)=q_{2}(s_2)$ for some $s_1\in S_1$ and $s_2\in S_2$. This is a particular case of state constraints, i.e., constraints depending only on the state $q$ of the trajectory.

Considering a more complicated example, assume that two (or more) shapes are embedded in a given background. Consider two states $q_1$ and $q_2$ in respective  spaces $M_1=\mathcal{C}^0(S_1,\R^d)$ and $M_2=\mathcal{C}^0(S_2,\R^d)$ of $\R^d$. Assume that they represent the boundaries of two disjoint open subsets $U_1$ and $U_2$ of $\R^d$. We define a third space  $M_3=M_1\times M_2$, whose elements are of the form $q_3=(q_{3}^1,q_{3}^2)$. This shape space represents the boundary of the complement of $U_1\cup U_2$ (this complement being the background). Each of these three shape spaces is acted upon by the diffeomorphisms of $\R^d$. Consider for every $M_i$ an RKHS $V_i$ of vector fields.
The total shape space is then $M=M_1\times M_2\times M_3$, an element of which is given by $q=(q_1,q_2,q_3)=(q_1,q_2,q_{3}^1,q_{3}^2)$. Note that $\partial (U_1\cup U_2)=\partial (U_1\cup U_2)^c$. However, since $(q_1,q_2)$ represents the left-hand side of this equality, and $q_3=(q_{3}^1,q_{3}^1)$ the right-hand side, it only makes sense to impose the constraints
$q_1=q_{3}^1$ and $q_2=q_{3}^2$.
This model can be used for instance to study two different shapes that are required not to overlap during the deformation.

In this example, one can even go further: the background does not need to completely mimic the movements of the shapes. We can for example let the boundaries slide on each another. This imposes constraints on the speed of the shapes (and not just on the shapes themselves), of the form $C_qK_qu=0$. See section \ref{sec7} for additional details.

Multi-shapes are of great interest in computational anatomy and provide an important motivation to study shape deformation under constraints.

\subsection{Finite dimensional approximation of optimal controls}\label{approx}
The purpose of this section is to show that at least one solution of Problem \ref{diffprob1} can be approximated by a sequence of solutions of a family of nested optimal control problems on finite-dimensional shape spaces with finite-dimensional constraints. We assume throughout that $\ell\geq 1$.

Let $(Y^n)_{n\in\N}$ be a sequence of Banach spaces and $(C^n)_{n\in\N}$ be a sequence of continuous mappings $C^n:M\times V \rightarrow Y^n$ that are linear and continuous with respect to the second variable.
Let $(g^n)_{n\in\N}$ be a sequence of continuous functions on $M$, bounded from below with a constant independent of $n$. For every integer $n$, we consider the problem of minimizing the functional
$$
J^n_1(v)=\frac{1}{2}\int_0^1\Vert v(t)\Vert_V^2dt+g^n(q(1)),
$$
over all $v(\cdot)\in L^2(0,1;V)$ such that $C^n_{q(t)}v(t)=0$ for almost every $t\in[0,1]$, where $q(\cdot):[0,1]\rightarrow M$ is the curve defined by $q(0)=q_0$ and $\dot{q}(t)=\xi_{q(t)} v(t)$ for almost every $t\in[0,1]$.
It follows from Theorem \ref{existence} that there exists an optimal solution $v^n(\cdot)\in L^2(0,1;V)$. We denote by $q^n(\cdot)$ the corresponding curve.

\begin{proposition}\label{apdimfinie}
Assume that $V$ is an RKHS of vector fields of class $\mathcal{C}^{\ell+1}_0$ on $\R^d$ and that the sequence $(\ker (C^n_q))_{n\in\N}$ is decreasing (in the sense of the inclusion) and satisfies 
$$ 
\bigcap_{n\in \N} \ker (C_q^n)=\ker(C_q),
$$
for every $q\in M$. Assume that $g^n$ converges to $g$ uniformly on every compact subset of $M$. Finally, assume that $q_0$ has compact support.
Then the sequence $(v^n(\cdot))_{n\in\N}$ is bounded in $L^2(0,1;V)$, and every cluster point of this sequence for the weak topology of $L^2(0,1;V)$ is an optimal solution of Problem \ref{diffprob1}. 
More precisely, for every cluster point $\bar v(\cdot)$ of $(v^n(\cdot))_{n\in\N}$, there exists a subsequence such that $(v^{n_j}(\cdot))_{j\in\N}$ converges weakly to $\bar v(\cdot)\in L^2(0,1;V)$, the sequence $(q^{n_j}(\cdot))_{j\in\N}$ of corresponding curves converges uniformly to $\bar q(\cdot)$, and $J_1^{n_j}(v^{n_j})$ converges to $\min J_1=J_1(\bar v)$ as $j$ tends to $+\infty$, and $\bar v(\cdot)$ is a solution of Problem \ref{diffprob1}.
\end{proposition}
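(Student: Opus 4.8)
The plan is to transfer admissibility back and forth between the approximating problems and Problem~\ref{diffprob1} in order to compare the infima, to invoke the compactness estimates already underlying the proof of Theorem~\ref{existence} in order to pass to the limit in the dynamics, and to exploit the nested structure $\bigcap_{n}\ker(C^n_q)=\ker(C_q)$ in order to pass to the limit in the constraints.

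First I would prove the uniform bound on $(v^n(\cdot))$. Fix $\varepsilon>0$ and an admissible pair $(\tilde q(\cdot),\tilde v(\cdot))$ for Problem~\ref{diffprob1} with $J_1(\tilde v)\le\inf J_1+\varepsilon$. Since $\tilde v(t)\in\ker(C_{\tilde q(t)})\subset\ker(C^n_{\tilde q(t)})$ for every $n$ and a.e.\ $t$, the pair $(\tilde q,\tilde v)$ is also admissible for the $n$-th approximating problem, whence $\tfrac12\int_0^1\Vert v^n(t)\Vert_V^2\,dt\le J_1^n(v^n)-\inf_M g^n\le J_1^n(\tilde v)-\inf_M g^n=\tfrac12\int_0^1\Vert\tilde v(t)\Vert_V^2\,dt+g^n(\tilde q(1))-\inf_M g^n$. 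The right-hand side is bounded uniformly in $n$ because $\inf_M g^n$ is uniformly bounded below and $g^n(\tilde q(1))\to g(\tilde q(1))$. Hence $(v^n)$ is bounded in the Hilbert space $L^2(0,1;V)$ and admits weak cluster points.

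Next, given a weak cluster point $\bar v$, say the weak limit of a subsequence $(v^{n_j})$, I would recover convergence of the curves by arguing exactly as in the proof of Theorem~\ref{existence}. Since $V\subset\mathcal{C}_0^{\ell+1}(\R^d,\R^d)$ with continuous inclusion and $\int_0^1\Vert v^{n_j}(t)\Vert_V\,dt$ is bounded, estimate~\eqref{derdiffeo} bounds the associated flows $\varphi^{n_j}$ in $\mathcal{C}^{\ell+1}$ uniformly in $t\in[0,1]$; by Arzel\`a--Ascoli and a diagonal argument, along a further subsequence $\varphi^{n_j}\to\bar\varphi$ in $\mathcal{C}^\ell$ uniformly on compact subsets of $\R^d$ and uniformly in $t$, and since $q_0$ has compact support, $q^{n_j}(t)=\varphi^{n_j}(t)\cdot q_0$ converges to $\bar q(t)=\bar\varphi(t)\cdot q_0$ uniformly on $[0,1]$. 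The weak--strong passage to the limit in $\dot\varphi^{n_j}=v^{n_j}\circ\varphi^{n_j}$ carried out in Section~\ref{sec_proof_thm_existence} identifies $\bar\varphi$ as the flow of $\bar v$, so that $\bar q$ is precisely the curve generated by $\bar v$.

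It then remains to pass to the limit in the constraints and to conclude optimality. For a fixed $m\in\N$ and every $n_j\ge m$, the inclusion $\ker(C^{n_j}_q)\subset\ker(C^m_q)$ gives $C^m_{q^{n_j}(t)}v^{n_j}(t)=0$ a.e.; using that $(q,w)\mapsto C^m_qw$ is continuous and linear in $w$, that the $q^{n_j}$ take values in a fixed compact subset of $M$ and converge uniformly to $\bar q$, and that $v^{n_j}\rightharpoonup\bar v$ in $L^2(0,1;V)$, a weak--strong convergence argument (again of the kind used for Theorem~\ref{existence}) yields $C^m_{\bar q(t)}\bar v(t)=0$ a.e., i.e.\ $\bar v(t)\in\ker(C^m_{\bar q(t)})$ a.e. As this holds for every $m$, the hypothesis $\bigcap_m\ker(C^m_q)=\ker(C_q)$ yields $\bar v(t)\in\ker(C_{\bar q(t)})$ a.e., so $(\bar q,\bar v)$ is admissible for Problem~\ref{diffprob1}. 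Finally, weak lower semicontinuity of $v\mapsto\int_0^1\Vert v\Vert_V^2\,dt$, continuity of $g$ (a uniform-on-compacts limit of the continuous functions $g^n$), together with $q^{n_j}(1)\to\bar q(1)$ and $g^{n_j}\to g$ uniformly on compact sets, give $J_1(\bar v)\le\liminf_j J_1^{n_j}(v^{n_j})$; conversely, the admissibility of the $\varepsilon$-optimal pair $(\tilde q,\tilde v)$ for every approximating problem gives $J_1^{n_j}(v^{n_j})=\min J_1^{n_j}\le J_1^{n_j}(\tilde v)\to J_1(\tilde v)\le\inf J_1+\varepsilon$, hence $\limsup_j J_1^{n_j}(v^{n_j})\le\inf J_1$. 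Combining the two inequalities with $\inf J_1\le J_1(\bar v)$ gives $J_1(\bar v)=\inf J_1=\lim_j J_1^{n_j}(v^{n_j})$, so $\bar v$ is a solution of Problem~\ref{diffprob1}. The main obstacle is this limit passage in the constraints: the operators $C^m$ vary with $m$ and are evaluated along the moving data $(q^{n_j},v^{n_j})$ that converge only in the weak--strong sense, while $Y^m$ need not be reflexive; it is precisely the monotonicity of the family $(\ker(C^n_q))_n$ that allows one to freeze $m$, let $j\to\infty$ first, and only then let $m\to\infty$.
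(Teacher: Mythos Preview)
Your proposal is correct and follows essentially the same route as the paper's proof: both use the uniform lower bound on $g^n$ (together with a common admissible control) to get boundedness, invoke Lemma~\ref{seqcomp} for uniform convergence of the curves, exploit the nested kernels by freezing an index $m$ and passing to the weak--strong limit in $C^m_{q^{n_j}}v^{n_j}=0$ before letting $m\to\infty$, and conclude via weak lower semicontinuity combined with the upper bound $\limsup_j J_1^{n_j}(v^{n_j})\le\inf J_1$ obtained from the reverse inclusion $\ker(C_q)\subset\ker(C^n_q)$. Your write-up is in fact slightly more explicit than the paper's on the boundedness step and on the two-stage limit in the constraints, but the argument is the same.
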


\begin{proof}
The sequence $(v^n(\cdot))_{n\in\N}$ is bounded in $L^2(0,1;V)$ as a consequence of the fact that the functions $g^n$ are uniformly bounded below. Let $\bar v(\cdot)$ be a cluster point of this sequence for the weak topology of $L^2(0,1;V)$.
Assume that $(v^{n_j}(\cdot))_{j\in\N}$ converges weakly to $\bar v(\cdot)\in L^2(0,1;V)$. Denoting by $\bar q(\cdot)$ the curve corresponding to $\bar v(\cdot)$, the sequence $(q^{n_j}(\cdot))_{j\in\N}$ converges uniformly to $\bar q(\cdot)$ (see Lemma \ref{seqcomp}).
Using the property of decreasing inclusion, we have $C^N_{q^{n_j}(\cdot)}v^{n_j}(\cdot)=0$ for every integer $N$ and every integer $j\geq N$.
Using the same arguments as in the proof of Theorem \ref{existence} (see Section \ref{sec_proof_thm_existence}), it follows that $C_{\bar q(\cdot)}\bar v(\cdot)=0$. Finally, since $\int_0^1\Vert \bar v(t)\Vert_V^2 \, dt\leq \liminf\int_0^1\Vert v^n(t)\Vert_V^2\, dt$, and since  $g^n$ converges uniformly to $g$ on every compact subset of $M$, it follows that $J_1(\bar v)\leq\liminf J_1^{n_j}(v^{n_j})$.

%We first prove that $\limsup J_1^n(v^n)\leq \min J_1$.
Since every $v\in\ker (C_q)$ belongs as well to $\ker (C^{n_j}_q)$, it follows that $J_1^{n_j}(v^{n_j})\leq J_1^{n_j}(v)$, for every time-dependent vector field $v(\cdot)\in L^2(0,1;V)$ such that $C_{q(\cdot)}v(\cdot)=0$, where $q(\cdot):[0,1]\rightarrow M$ is the curve corresponding to $v(\cdot)$. Since $g^n$ converges uniformly to $g$, one has $J_1^{n_j}(v)\rightarrow J_1(v)$ as $n\rightarrow +\infty$. %The claimed inequality follows.
It follows that $\limsup J_1^{n_j}(v^{n_j})\leq \min J_1$.

We have proved that $J_1(\bar v)\leq\liminf J_1^{n_j}(v^{n_j})\leq \limsup J_1^{n_j}(v^{n_j})\leq \min J_1$, and therefore $J_1(\bar v)=\min J_1$, that is, $\bar v(\cdot)$ is an optimal solution of Problem \ref{diffprob1}, and $J_1^{n_j}(v^{n_j})$ converges to $\min J_1=J_1(\bar v)$ as $j$ tends to $+\infty$.
\end{proof}

\paragraph{Application: approximation with finite dimensional shape spaces.}
Let $S^1$ be the unit circle of $\R^2$, let $\ell\geq 1$ be an integer, $X=\mathcal{C}^\ell(S^1,\R^d)$ and let $M=\mathrm{Emb}^\ell(S^1,\R^d)$ be the space of parametrized simple closed curves of class $\mathcal{C}^\ell$ on $\R^d$. We identify $X$ with the space of all mappings $f\in \mathcal{C}^\ell([0,1],\R^d)$ such that $f(0)=f(1)$, $f'(0)=f'(1)$, ..., $f^{(\ell)}(0)=f^{(\ell)}(1)$.
The action of the group of diffeomorphisms of $\R^d$ on $M$, defined by composition, is smooth of order $\ell$ (see Section \ref{modshsp}).
Let $q_1\in M$ and $c>0$ fixed. We define $g$ by
$$
g(q)=c\int_{S^1} \vert q(t)-q_1(t)\vert^2 \, dt,
$$
for every $q\in M$.
Consider pointwise kinetic constraints $C:M\times X\rightarrow \mathcal{C}^0(S^1,\R^m)$, defined by $(C_q\dot{q})(s)=F_{q(s)}\dot{q}(s)$ for every $s\in S^1$, with $F\in \mathcal{C}^0(\R^d,\mathcal{M}_{m,d}(\R))$, where $\mathcal{M}_{m,d}(\R)$ is the set of real matrices of size $m\times d$.

Note that the multishapes constraints described in Section \ref{mlmk} are of this form.

Our objective is to approximate this optimal control problem with a sequence of optimal control problems specified on the finite dimensional shape spaces $\mathrm{Lmk}_d(2^n)$.

For $n\in \N$, let $\xi^n$ be the infinitesimal action of the group of diffeomorphisms on $\mathrm{Lmk}_d(2^n)$, the elements of which are denoted by $q^n=(x_1^n,\dots,x_{2^n}^n)$. Define on the associated control problem the kinetic constraints $\tilde{C}^n_{q^n}\xi^n_{q_n}v=(F_{x_1^n}v(x_1^n),\dots,F_{x_{2^n}^n}v(x_{2^n}^n))$,
and the data attachment function
$$
\tilde{g}^n(q^n)=\frac{c}{2^n}\sum_{r=1}^{2^n}\vert x_r^n-q_1(2^{-n}r)\vert.
$$
Let $v^n(\cdot)$ be an optimal control of Problem \ref{diffprob1} for the above optimal control problem specified on $\mathrm{Lmk}_d(2^n)$.

\begin{proposition}\label{findimapprox}
Every cluster point of the sequence $(v^n(\cdot))_{n\in\N}$ for the weak topology on $L^2(0,1;V)$ is a solution of Problem \ref{diffprob1} specified on $M=\mathrm{Emb}^\ell(S^1,\R^d)$ with constraints and minimization functional respectively given by $C$ and $g$ defined above.
\end{proposition}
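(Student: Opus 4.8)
The plan is to recognize Proposition \ref{findimapprox} as an instance of the abstract approximation result Proposition \ref{apdimfinie}, once the finite-dimensional problems on $\mathrm{Lmk}_d(2^n)$ are transported, via the lifting construction of Section \ref{mlmk} and Remark \ref{refapprox}, onto a fixed ambient shape space so that all the controls $v^n(\cdot)$ live in the common space $L^2(0,1;V)$ and all data-attachment/constraint functionals become functionals on one and the same $M$. First I would form, for each $n$, the evaluation map $e_n:M=\mathrm{Emb}^\ell(S^1,\R^d)\to\mathrm{Lmk}_d(2^n)$ sending $q\mapsto(q(2^{-n}),q(2\cdot 2^{-n}),\dots,q(1))$; this is a smooth equivariant submersion, so $\mathrm{Lmk}_d(2^n)$ is a lifted shape space over $M$ in the sense of Section \ref{mlmk} (more precisely, $M$ together with the sampled landmarks is a lifted shape space, with $M$ playing the role of the big space and $\mathrm{Lmk}_d(2^n)$ the small one). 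By Remark \ref{refapprox}, solving the landmark problem with constraints $\tilde C^n$ and attachment $\tilde g^n$ is the same as solving Problem \ref{diffprob1} on $M$ with the pulled-back data $C^n_q := \tilde C^n_{e_n(q)}\circ(\text{sampling of }\xi_q v)$, i.e. $C^n_q v = (F_{q(2^{-n}r)}v(q(2^{-n}r)))_{r=1}^{2^n}$, and $g^n := \tilde g^n\circ e_n$. Thus $v^n(\cdot)$ is, simultaneously, an optimal control for the $n$-th landmark problem and for Problem \ref{diffprob1} on $M$ with data $(C^n,g^n)$.

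Next I would verify the three hypotheses of Proposition \ref{apdimfinie} for this sequence $(C^n,g^n)$. \emph{Nestedness and intersection of kernels:} since the sampling points $\{2^{-n}r : 1\le r\le 2^n\}$ are nested as $n$ increases, $\ker(C^n_q)=\{v\in V : F_{q(s)}v(q(s))=0 \text{ for all dyadic } s \text{ of level }\le n\}$ is decreasing in $n$; and because $s\mapsto F_{q(s)}v(q(s))$ is continuous on $S^1$ and the union of dyadic points of all levels is dense, $\bigcap_n\ker(C^n_q)=\{v\in V : F_{q(s)}v(q(s))=0\ \forall s\in S^1\}=\ker(C_q)$. (Here one uses $V\subset\mathcal C^{\ell+1}_0\subset\mathcal C^1_0$, so $v\circ q$ is continuous.) \emph{Uniform convergence of $g^n$ on compacts:} $g^n(q)=\frac{c}{2^n}\sum_{r=1}^{2^n}|q(2^{-n}r)-q_1(2^{-n}r)|$ is a Riemann sum for $g(q)=c\int_{S^1}|q(t)-q_1(t)|^2\,dt$ — wait, one must be careful: $\tilde g^n$ as written sums $|x_r^n-q_1(2^{-n}r)|$, the modulus not its square, so $g^n(q)$ converges to $c\int_{S^1}|q(t)-q_1(t)|\,dt$, not to $g$. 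I would note this (presumably a typo in the statement, and the intended $\tilde g^n$ has $|x_r^n-q_1(2^{-n}r)|^2$) and proceed with the squared version: for $q$ in a compact $\mathcal K\subset M$, the functions $t\mapsto|q(t)-q_1(t)|^2$ are uniformly $\mathcal C^1$ (indeed uniformly Lipschitz on $\mathcal K$), so the Riemann sums converge to the integral uniformly over $\mathcal K$ with an error $O(2^{-n})$ controlled by the common Lipschitz constant; hence $g^n\to g$ uniformly on compacts. The $g^n$ are clearly bounded below (by $0$) with constant independent of $n$. \emph{Compact support:} $q_0\in\mathrm{Emb}^\ell(S^1,\R^d)$ has compact image, so it has compact support in the sense of Definition \ref{compsup}.

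Having checked all hypotheses, Proposition \ref{apdimfinie} applies directly: the sequence $(v^n(\cdot))_{n\in\N}$ is bounded in $L^2(0,1;V)$, and every weak cluster point $\bar v(\cdot)$ is an optimal solution of Problem \ref{diffprob1} on $M$ with constraints $C$ and attachment $g$. This is exactly the assertion of Proposition \ref{findimapprox}, so the proof concludes by simply invoking Proposition \ref{apdimfinie} with the verified data.

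The main obstacle, as I see it, is purely bookkeeping rather than analytic: one must make the identification of the landmark problems as pulled-back problems on the \emph{single} fixed space $M$ completely rigorous, i.e.\ check that the map $q\mapsto C^n_q$ is continuous $M\times V\to Y^n=(\R^m)^{2^n}$ (immediate from continuity of $F$, of $v$, and of evaluation on the closed curve) and that an optimal control for the $n$-th landmark problem really coincides with one for the pulled-back problem on $M$ — this is Remark \ref{refapprox}, but it requires that the flow $\varphi$ generated by $v^n$ acts consistently on $M$ and on the sampled landmarks, which holds because $e_n$ is equivariant: $e_n(\varphi\cdot q_0)=\varphi\cdot e_n(q_0)$. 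A secondary point to state carefully is that the dyadic sampling indeed yields \emph{decreasing} kernels — this needs the levels to be nested, which they are since level-$n$ dyadic points are a subset of level-$(n+1)$ points. Once these identifications are in place, no new estimate beyond those already in Proposition \ref{apdimfinie} and Theorem \ref{existence} is needed.
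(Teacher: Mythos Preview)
Your proposal is correct and follows essentially the same route as the paper: pull back the landmark problems to $M$ via the equivariant evaluation submersion $P^n=e_n$, invoke Remark \ref{refapprox} to identify optimal controls, verify the nested-kernel and uniform-convergence hypotheses (density of dyadics plus a Riemann-sum argument), and conclude by Proposition \ref{apdimfinie}. You even caught the $|\cdot|$ versus $|\cdot|^2$ discrepancy in the definition of $\tilde g^n$, which is indeed a typo in the paper.
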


\begin{proof}
Define the submersions $P^n:M\rightarrow \mathrm{Lmk}_d(2^n)$
by
$$
P^n(q)=(q(2^{-n}),\dots,q(2^{-n}r),\dots,q(1)).
$$
Let $g^n=\tilde{g}^n\circ P^n$ and $C^n_q=\tilde{C}_{P^n(q)}dP_{q}$.
In other words, $g^n$ (resp. $C^n$) are the lifts of $\tilde{g}^n$ (resp. $\tilde{C}^n$) from $\mathrm{Lmk}_d(2^n)$ to $M$ through $P^n$. 
Using Remark \ref{refapprox} on lifted shape spaces, we infer that the optimal control $v_n(\cdot)$ of Problem \ref{diffprob1} specified on the finite dimensional space $\mathrm{Lmk}_d(2^n)$ with constraints $\tilde{C}^n$ and data attachment $\tilde{g}^n$ is also optimal for Problem \ref{diffprob1} specified on the infinite dimensional set $M$ with constraints $C^n$ and data attachment $g^n$.
Now, if $v\in \ker (C^n)$ for every integer $n$, then $F_{q(s)}v(q(s))=0$ for every $s=2^{-n}r$ with $n\in \N$ and $r\in\lbrace 1,\dots,2^n\rbrace$. The set of such $s$ is dense in $[0,1]$, and $s\mapsto F_{q(s)}v(q(s))$ is continuous. Therefore $F_{q(s)}v(q(s))=0$ for every $s\in [0,1]$, that is, $C_{q(\cdot)}\xi_{q(\cdot)}v(\cdot)= 0$. Since the converse is immediate, we get
$\ker (C_{q(\cdot)})=\bigcap_{n\in\N}\ker (C^n_{q(\cdot)})$.
Finally, since $q(\cdot)$ is a closed curve of class at least $\mathcal{C}^\ell$ with $\ell\geq 1$, it is easy to check that
$$
g^n(q)=\frac{c}{2^n}\sum_{r=1}^{2^n}\vert q(2^{-n}r)-q_1(2^{-n}r)\vert
$$
converges to $g$, uniformly on every compact subset of $M$.
Therefore, Proposition \ref{apdimfinie} can be applied to the sequence $(v^n(\cdot))_{n\in\N}$, which completes the proof.
\end{proof}

\begin{remark}\label{remapdimfinie}
The same argument works as well if we replace $S^1$ with any compact Riemannian manifold $\mathcal{S}$, and applies to the vertices of increasingly finer triangulations of $\mathcal{S}$.
\end{remark}

\subsection{Proof of Theorem \ref{existence}}\label{sec_proof_thm_existence}
Let $(v_n(\cdot))_{n\in\N}$ be a sequence of $L^2(0,1;V)$ such that $J_1(v_n)$ converges to its infimum. Let $(\varphi_n)_{n\in\N}$ be the corresponding sequence of deformations and let $(q_n(\cdot))_{n\in\N}$ be the sequence of corresponding curves (one has $q_n(t)=\varphi_n(t)\cdot q_0$ thanks to Remark \ref{rempb1_unknowns}). Since $g$ is bounded below, it follows that the sequence $(v_n(\cdot))_{n\in\N}$ is bounded in $L^2(0,1;V)$.
The following lemma is well known (see \cite{YBOOK}), but we provide a proof for the sake of completeness.

\begin{lemma}\label{seqcomp}
There exist $\bar v(\cdot)\in L^2(0,1;V)$, corresponding to the deformation $\bar\varphi$, and a sequence $({n_j})_{j\in\N}$ of integers such that $(v_{n_j}(\cdot))_{j\in\N}$ converges weakly to  $\bar v(\cdot)$ and such that, for every compact subset $U$ of $\R^d$,
$$
\sup_{t\in[0,1]}\Vert(\varphi_{n_j}(t,\cdot)- \bar\varphi(t,\cdot))_{\vert U}\Vert_{\ell}\underset{j\rightarrow+\infty}{\longrightarrow} 0.
$$
\end{lemma}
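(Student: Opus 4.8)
The plan is to combine a weak‑compactness extraction for the controls with an Arzel\`a--Ascoli argument for the flows, and then to identify the limiting flow with the flow of the weak limit through the integral form of \eqref{EDOdiffeo}. Since $V$ is a Hilbert space, so is $L^2(0,1;V)$, and the bounded sequence $(v_n(\cdot))_{n\in\N}$ admits a subsequence (still denoted $(v_n(\cdot))$ after relabelling) converging weakly to some $\bar v(\cdot)\in L^2(0,1;V)$. As $\bar v\in L^1(0,1;V)\subset L^1(0,1;\mathcal{C}_0^1(\R^d,\R^d))$, the Cauchy--Lipschitz theorem produces its flow $\bar\varphi$, which is the candidate limit.

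Next I would establish uniform bounds. Using the continuous inclusion $V\hookrightarrow\mathcal{C}_0^{\ell+1}(\R^d,\R^d)$ one gets $\int_0^1\Vert v_n(t)\Vert_{\ell+1}\,dt\leq c\Vert v_n\Vert_{L^2(0,1;V)}$, bounded uniformly in $n$; hence \eqref{derdiffeo}--\eqref{derdiffeo2} (applied up to order $\ell+1$) yield a uniform bound on $\sup_{t\in[0,1]}\Vert\varphi_n(t)-\mathrm{Id}_{\R^d}\Vert_{\ell+1}$. Moreover $\dot\varphi_n(t)=v_n(t)\circ\varphi_n(t)$ together with the composition estimates of the type \eqref{derdiffeo2} gives $\Vert\dot\varphi_n(t)\Vert_{\ell}\leq C\Vert v_n(t)\Vert_V$ with $C$ independent of $n$, so $\Vert\varphi_n(t)-\varphi_n(t')\Vert_\ell\leq C\Vert v_n\Vert_{L^2(0,1;V)}\vert t-t'\vert^{1/2}$ by Cauchy--Schwarz. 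Thus, for every fixed compact $U\subset\R^d$, the restrictions $\varphi_n(\cdot)_{\vert U}$ form an equicontinuous family in $\mathcal{C}^0([0,1];\mathcal{C}^\ell(U,\R^d))$ whose values at each fixed $t$ lie in a compact subset of $\mathcal{C}^\ell(U,\R^d)$ (Arzel\`a--Ascoli, using the $\mathcal{C}^{\ell+1}$ bound and compactness of $U$), hence a relatively compact family in $\mathcal{C}^0([0,1];\mathcal{C}^\ell(U,\R^d))$. Exhausting $\R^d$ by compacts $U_1\subset U_2\subset\cdots$ and diagonalizing, I extract a further subsequence $(\varphi_{n_j})$ and a map $\psi$ with $\sup_{t\in[0,1]}\Vert(\varphi_{n_j}(t)-\psi(t))_{\vert U}\Vert_\ell\to0$ for every compact $U$.

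It remains to show $\psi=\bar\varphi$. Passing to the limit in $\varphi_{n_j}(t,x)=x+\int_0^t v_{n_j}(s,\varphi_{n_j}(s,x))\,ds$, I split the integrand as $\bigl(v_{n_j}(s,\varphi_{n_j}(s,x))-v_{n_j}(s,\psi(s,x))\bigr)+v_{n_j}(s,\psi(s,x))$. On a fixed compact $U$ containing all the relevant orbits, the first difference is bounded by $\Vert v_{n_j}(s)\Vert_{1}\,\sup_{y\in U}\vert\varphi_{n_j}(s,y)-\psi(s,y)\vert$, so after integration in $s$ and using that $\Vert v_{n_j}\Vert_{L^1(0,1;V)}$ is bounded, this term tends to $0$. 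For the second term, fix $x$ and $b\in\R^d$: the map $s\mapsto\mathbf 1_{[0,t]}(s)\,K(\cdot,\psi(s,x))b$ is a fixed element of $L^2(0,1;V)$ (its squared $V$-norm is $\mathbf 1_{[0,t]}(s)\,b^TK(\psi(s,x),\psi(s,x))b$, bounded because $s\mapsto\psi(s,x)$ stays in a compact set and $K$ is continuous), and the reproducing property gives $(K(\cdot,\psi(s,x))b,v_{n_j}(s))_V=b^Tv_{n_j}(s,\psi(s,x))$; hence the weak convergence $v_{n_j}\rightharpoonup\bar v$ implies $\int_0^t b^Tv_{n_j}(s,\psi(s,x))\,ds\to\int_0^t b^T\bar v(s,\psi(s,x))\,ds$. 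Therefore $\psi(t,x)=x+\int_0^t\bar v(s,\psi(s,x))\,ds$ for all $(t,x)$, so $\psi$ is a flow of $\bar v$, and $\psi=\bar\varphi$ by uniqueness in Cauchy--Lipschitz; this is exactly the claimed conclusion.

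I expect the main obstacle to be the organization of the a priori estimates so that they are simultaneously uniform in $n$ and strong enough ($\mathcal{C}^{\ell+1}$ in space, $1/2$-H\"older in time) to feed Arzel\`a--Ascoli, together with the passage to the limit in the nonlinear term $v_{n_j}\circ\varphi_{n_j}$: one must cleanly separate the "strong convergence of the base point" contribution, handled by the Gronwall/Lipschitz estimate on a fixed compact set capturing all orbits, from the "weak convergence of the vector field" contribution, handled by testing against the fixed $L^2(0,1;V)$ element built from the kernel. The remaining steps are routine.
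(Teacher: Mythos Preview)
Your proposal is correct and follows essentially the same approach as the paper: weak compactness of $(v_n)$ in $L^2(0,1;V)$, Arzel\`a--Ascoli on compacts using the $\mathcal{C}^{\ell+1}$ bound from \eqref{derdiffeo}, diagonal extraction, and identification of the limit via the integral equation with the same splitting of $v_{n_j}\circ\varphi_{n_j}-\bar v\circ\psi$ into a Lipschitz/strong piece and a weak piece tested against the kernel element $\chi_{[0,t]}\delta_{\psi(\cdot,x)}$. The only cosmetic difference is that you define $\bar\varphi$ a priori as the flow of $\bar v$ and show the Ascoli limit $\psi$ equals it, whereas the paper calls the Ascoli limit $\bar\varphi$ directly and shows it solves the flow equation of $\bar v$.
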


\begin{proof}[Proof of Lemma \ref{seqcomp}]
Since the sequence $(v_n(\cdot))_{n\in\N}$ is bounded in the Hilbert  space $L^2(0,1;V)$, there exists a subsequence $(v_{n_j}(\cdot))_{j\in\N}$ converging weakly to some $\bar v\in L^2(0,1;V)$. 
Besides, using \eqref{derdiffeo} for $i=\ell+1$ and the Ascoli theorem, we infer that for every compact subset $U$ of $\R^d$, the sequence $(\varphi_{n_j})_{j\in\N}$ is contained in a compact subset of the space $\mathcal{C}^0([0,1],\mathcal{C}_0^{\ell}(U,\R^d))$.
Considering a compact exhaustion of $\R^d$ and using a diagonal extraction argument, we can therefore extract a subsequence $(\varphi_{n_{j_k}})_{k\in\N}$ with limit $\bar\varphi$ such that, for any compact subset $U$ of $\R^d$,
\begin{equation}\label{cvgf}
\sup_{t\in[0,1]}\Vert(\varphi_{n_{j_k}}(t,\cdot)- \bar\varphi(t,\cdot))_{\vert U}\Vert_{\ell}\underset{k\rightarrow+\infty}{\longrightarrow} 0.
\end{equation}
To complete the proof, it remains to prove that $\bar \varphi$ is the deformation induced by $\bar v(\cdot)$. 
On the first hand, we have $\lim_{k\rightarrow +\infty}\varphi_{n_{j_k}}(t,x)=\bar \varphi(t,x)$, for every $x\in \R^d$ and every $t\in[0,1]$. On the second hand, one has, for every $k\in\N$,
\begin{align*}
& \left\vert\varphi_{n_{j_k}}(t,x)-x-\int_0^t\bar v\circ\bar \varphi(s,x)\, ds\right\vert
=
\left\vert\int_0^t\left(v_{n_{j_k}}\circ\varphi_{n_{j_k}}(s,x)-\bar v\circ\bar \varphi(s,x)\right) ds \right\vert \\
&\leq 
\left\vert\int_0^tv_{n_{j_k}}\circ\varphi_{n_{j_k}}(x)-v_{n_{j_k}}\circ\bar \varphi(s,x) \, ds\right\vert+
\left\vert\int_0^tv_{n_{j_k}}\circ\bar \varphi(s,x)-\bar v\circ\bar \varphi(s,x) \, ds\right\vert.
\end{align*}
Set $C=\sup_{n\in\N}\int_0^1\Vert v_n(t)\Vert_{1} \, dt$, and define $\chi_{[0,t]}\delta_{\bar \varphi(\cdot,x)}\in V^*$ by 
$(\chi_{[0,t]}\delta_{\bar \varphi(\cdot,x)}\vert v)_{L^2(0,1;V)}=\int_0^tv\circ\bar \varphi(s,x) \, ds$.
Then,
\begin{align*}
 \left\vert\varphi_{n_{j_k}}(t,x)-x-\int_0^t\bar v\circ\bar \varphi(s,x)\, ds\right\vert
\leq \ &   C\sup_{s\in[0,t]}\vert\varphi_{n_{j_k}}(s,x)- \bar \varphi(s,x)\vert  \\
& +  \left\vert (\chi_{[0,t]}\delta_{\bar \varphi(\cdot,x)}\vert v_{n_{j_k}}-\bar v)_{L^2(0,1;V)}\right\vert,
\end{align*}
which converges to $0$ as $j$ tends to $+\infty$ thanks to \eqref{cvgf} and to the weak convergence $v_{n_{j_k}}(\cdot)$ to $\bar v(\cdot)$. We thus conclude that $\bar \varphi(t,x)=x+\int_0^t\bar v\circ\bar \varphi(s,x)\, ds$, which completes the proof.
\end{proof}

Setting $\bar q(t)=\bar\varphi(t)\cdot q_0$ for every $t\in[0,1]$, one has $\dot{\bar q}(t)=\xi_{\bar q(t)}\bar v(t)$ for almost every $t\in[0,1]$, and it follows from the above lemma and from the fact that $q_0$ has compact support that 
$$
\displaystyle\sup_{t\in[0,1]}\Vert \bar q(t)-q_{n_j}(t)\Vert_X\underset{j\rightarrow+\infty}{\longrightarrow} 0.
$$

The operator $v(\cdot)\mapsto C_{\bar{q}(\cdot)}v(\cdot)$ is linear and continuous on $L^2(0,1;V)$, so it is also weakly continuous \cite{BBOOK}. We infer that the sequence $C_{q_{n_j}(\cdot)}v_{n_j}(\cdot)$ converges weakly to 
$C_{\bar q(\cdot)}\bar v(\cdot)$ in $L^2(0,1;Y)$. Since $C_{q_{n_j}(\cdot)}v_{n_j}(\cdot)=0$ for every $j\in\N$, it follows that $C_{\bar q(\cdot)}\bar v(\cdot)=0$. In other words, the time-dependent vector field $\bar v(\cdot)$ satisfies the constraints.

It remains to prove that $\bar v(\cdot)$ is indeed optimal. From the weak convergence of the sequence $(v_{n_j}(\cdot))_{j\in\N}$ to $\bar v(\cdot)$ in $L^2(0,1;V)$, we infer that$$\int_0^1\Vert \bar v(t)\Vert_V^2\, dt\leq \liminf_{j\rightarrow+\infty}\int_0^1\Vert v_{n_j}(t)\Vert_V^2\, dt.$$
Besides, since $g$ is lower continuous, $\liminf_{j\rightarrow+\infty} g(q_{n_j}(1))\geq g(\bar q(1))$. Since $J_1(v_{n_j})$ converges to $\inf J_1$, it follows that $J_1(\bar v)=\inf J_1$.

\section{Constrained geodesic equations in shape spaces}\label{secgeod}
In this section, we derive first-order necessary conditions for optimality in Problem \ref{diffprob1}. We extend the well-known Pontryagin maximum principle (PMP) from optimal control theory to our infinite-dimensional framework, under the assumption that the constraints are surjective. This allows us to derive the constrained geodesic equations for shape spaces, and we show how they can be reduced to simple Hamiltonian dynamics on the cotangent space of the shape space.

\subsection{First-order optimality conditions: PMP in shape spaces}\label{PMPs}
%In this section, we derive first-order necessary conditions for optimality, by extending the Pontryagin maximum principle to our setting. 
We address the Pontyagin maximum principle in a slightly extended framework, considering a more general control system and a more general minimization functional than in Problem \ref{diffprob1}.

Let $V$ be a Hilbert space, and let $X$ and $Y$ be Banach spaces. Let $M$ be an open subset of $X$. Let  $\xi:M\times V\rightarrow X$ and $C:M\times V\rightarrow Y$ be mappings of class $\mathcal{C}^1$. Let $L:M\times V \rightarrow \R$ and $g:M\rightarrow\R$ be functions of class $\mathcal{C}^1$. We assume that there exist continuous functions $\gamma_0:\R\rightarrow\R$ and $\gamma_1:X\rightarrow\R$ such that $\gamma_0(0)=0$ and 
\begin{equation}\label{estilag}
\Vert dL_{q',v'}-dL_{q,v}\Vert_{X^*\times V^*}\leq \gamma_0(\Vert q'-q\Vert_X)+\gamma_1(q-q')\Vert v'-v\Vert_V ,
\end{equation}
for all $(q, q')\in M^2$ and all $(v, v')\in V^2$. 

Let $q_0\in M$.
We consider the optimal control problem of minimizing the functional
\begin{equation}\label{defJqv}
J(q,v)=\int_0^1L(q(t),v(t)) \, dt+g(q(1))
\end{equation}
over all $(q(\cdot),v(\cdot))\in H^1_{q_0}(0,1;M)\times L^2(0,1;V)$ such that $\dot{q}(t)=\xi_{q(t)}v(t)$ and $C_{q(t)}v(t)=0$ for almost every $t\in[0,1]$. We define the \textit{Hamiltonian} $H:M\times X^*\times V\times Y^*\rightarrow \R$ by
\begin{equation}\label{defHam}
H(q,p,v,\lambda)=\langle p,\xi_qv\rangle_{X^*,X}-L(q,v)-\langle\lambda, C_qv\rangle_{Y^*,Y}.
\end{equation}
It is a function of class $\mathcal{C}^1$. Using the canonical injection $X\hookrightarrow  X^{**}$, we have $\partial_pH=\xi_qv$.

\begin{remark}
The estimate \eqref{estilag} on $L$ is exactly what is required to ensure that the mapping $(q,v)\mapsto \int_0^1L(q(t),v(t))\, dt$ be well defined and Fr\'echet differentiable for every $(q,v)\in  H_{q_0}^1(0,1;M)\times L^2(0,1;V)$. Indeed, the estimate
$L(q(t),v(t))\leq L(q(t),0)+\gamma_1(q(t),q(t))\Vert v(t)\Vert_V^2$
implies the integrability property. The differentiability is an immediate consequence of the following estimate, obtained by combining \eqref{estilag} with the mean value theorem: for every $t\in [0,1]$, and for some $s_t\in[0,1]$, one has
$$
\begin{aligned}
\vert L(q(t)+\delta q(t),v(t)+\delta v(t))-L(q(t),v(t)-dL_{q(t),v(t)}(\delta q(t),\delta v(t))\vert
\\ 
\leq \Big(\gamma_0(\Vert\delta q(t)\Vert_X)+\gamma_1(s_t\delta q(t))\Vert \delta v(t)\Vert\Big)(\Vert\delta q(t)\Vert_X+\Vert \delta v(t)\Vert_V).
\end{aligned}
$$
\end{remark}

\begin{theorem}\label{PMP}
Assume that the linear operator $C_{q(t)}:V\rightarrow Y$ is surjective for every $q\in M$. %, and that $L$ satisfies \eqref{estilag}. 
Let $(q(\cdot),v(\cdot))\in H_{q_0}^1(0,1;X)\times L^2(0,1;V)$ be an optimal solution of the above optimal control problem. Then there exist $p\in H^1(0,1;X^*)$ and $\lambda\in L^2(0,1;Y^*)$ such that $p(1)+dg_{q(1)}=0$ and
\begin{equation}
\begin{split}
&\dot{q}(t)=\partial_pH(q(t),p(t),v(t),\lambda(t)),\\ 
&\dot{p}(t)=-\partial_qH(q(t),p(t),v(t),\lambda(t)),\\
&\partial_vH(q(t),p(t),v(t),\lambda(t))=0,
\end{split}
\end{equation}
for almost every $t\in[0,1]$.
\end{theorem}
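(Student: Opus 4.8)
The plan is to recast the problem as a smooth equality-constrained minimization over the Hilbert space $L^2(0,1;V)$ and then apply a Lagrange multiplier rule: the surjectivity hypothesis on $C_{q(t)}$ is precisely what makes the constraint map have surjective derivative along the optimal trajectory, which is what produces the multiplier $\lambda$ and rules out an abnormal term (so that the cost enters the Hamiltonian with coefficient $1$).

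First I would fix an optimal pair $(\bar q(\cdot),\bar v(\cdot))$ and check that the end-point map $v\mapsto q_v$, sending $v\in L^2(0,1;V)$ to the solution of $\dot q=\xi_q v$, $q(0)=q_0$, is well defined and of class $\mathcal{C}^1$ from a neighborhood of $\bar v$ into $H^1(0,1;X)$: existence and uniqueness come from the Cauchy--Lipschitz theorem applied to the $\mathcal{C}^1$ field $\xi$ together with Gronwall estimates, and $\mathcal{C}^1$-regularity is the usual differentiable-dependence-on-data statement. Its differential $\delta v\mapsto\delta q=:\mathcal{L}\delta v$ solves the linearized equation $\dot{\delta q}=\partial_q\xi_{(\bar q,\bar v)}(\delta q)+\xi_{\bar q}\delta v$, $\delta q(0)=0$, and $\mathcal{L}$ is of Volterra type ($\delta q(t)$ depends only on $\delta v_{|[0,t]}$). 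The cost becomes the $\mathcal{C}^1$ functional $\tilde J(v)=\int_0^1 L(q_v(t),v(t))\,dt+g(q_v(1))$, differentiability of the running term being exactly what estimate \eqref{estilag} guarantees (as in the Remark following \eqref{defHam}), and the constraints become $\Phi(v)=0$ with $\Phi:v\mapsto\big(t\mapsto C_{q_v(t)}v(t)\big)\in L^2(0,1;Y)$ of class $\mathcal{C}^1$, and $d\Phi_{\bar v}(\delta v)(t)=\partial_qC_{(\bar q(t),\bar v(t))}(\delta q(t))+C_{\bar q(t)}\delta v(t)$.

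The key step, which I expect to be the main obstacle, is to show that $d\Phi_{\bar v}:L^2(0,1;V)\to L^2(0,1;Y)$ is onto. Since $V$ is Hilbert and each $C_{\bar q(t)}$ is surjective, $C_{\bar q(t)}$ restricts to a bounded bijection from $(\ker C_{\bar q(t)})^\perp$ onto $Y$; a compactness argument over $[0,1]$ using the continuity of $t\mapsto C_{\bar q(t)}$ yields a uniform $c>0$ with $\|C_{\bar q(t)}w\|_Y\geq c\|w\|_V$ on $(\ker C_{\bar q(t)})^\perp$, hence a measurable, uniformly bounded field of right inverses $R(t):Y\to V$, $C_{\bar q(t)}R(t)=\mathrm{Id}_Y$. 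Given $h\in L^2(0,1;Y)$, looking for $\delta v(t)=R(t)\big(h(t)-\partial_qC_{(\bar q(t),\bar v(t))}(\delta q(t))\big)$ turns the equation $d\Phi_{\bar v}(\delta v)=h$ into a linear Volterra integral equation for $\delta q$ with integrable kernel (dominated by $\|\bar v(t)\|_V\in L^2\subset L^1$); Gronwall's lemma gives $\delta q\in\mathcal{C}^0([0,1];X)$, and then $\delta v\in L^2(0,1;V)$. Consequently $d\Phi_{\bar v}$ is surjective, its range is closed, and since its kernel is complemented in the Hilbert space $L^2(0,1;V)$, the implicit function theorem shows that $\Phi^{-1}(0)$ is, near $\bar v$, a $\mathcal{C}^1$ submanifold with tangent space $\ker d\Phi_{\bar v}$. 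As $\bar v$ minimizes $\tilde J$ on $\Phi^{-1}(0)$, the derivative $d\tilde J_{\bar v}$ annihilates $\ker d\Phi_{\bar v}$, hence lies in the (closed) range of $d\Phi_{\bar v}^*$, i.e. $d\tilde J_{\bar v}=-d\Phi_{\bar v}^*\mu$ for some $\mu$ in the dual of $L^2(0,1;Y)$, which we represent by a measurable $Y^*$-valued function $\lambda$.

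Finally I would introduce the adjoint state $p(\cdot)$ as the solution of the backward linear equation $\dot p=-\partial_qH(q,p,v,\lambda)$, $p(1)=-dg_{q(1)}$; pairing $p$ with the linearized state equation and integrating by parts converts the first-order condition $d\tilde J_{\bar v}(\delta v)+\langle\mu,d\Phi_{\bar v}\delta v\rangle=0$, valid for every $\delta v\in L^2(0,1;V)$, into $\int_0^1\langle\partial_vH(q(t),p(t),v(t),\lambda(t)),\delta v(t)\rangle_{V^*,V}\,dt=0$, whence $\partial_vH=0$ a.e. The remaining two equations $\dot q=\partial_pH$ and $\dot p=-\partial_qH$ hold by construction, using $\partial_pH=\xi_qv$. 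The $L^2$-integrability of $\lambda$ is then recovered a posteriori from $\partial_vH=0$, which reads $C_{\bar q(t)}^*\lambda(t)=\xi_{\bar q(t)}^*p(t)-\partial_vL$ with right-hand side in $L^2(0,1;V^*)$, together with the lower bound $\|C_{\bar q(t)}^*y^*\|_{V^*}\geq c\|y^*\|_{Y^*}$ dual to the surjectivity estimate; and $p\in H^1(0,1;X^*)$ follows from the growth bounds on $\partial_qH$ coming from \eqref{estilag} and from the linearity of $\xi_q$ and $C_q$ in $v$. Everything downstream of the surjectivity of $d\Phi_{\bar v}$ is routine, so that is where the real work lies.
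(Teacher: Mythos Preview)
Your approach is correct and takes a genuinely different route from the paper's. The paper keeps the pair $(q,v)$ as independent unknowns and encodes the dynamics $\dot q=\xi_qv$ as an additional equality constraint $\Gamma_1(q,v)=0$, so that the Lagrange multiplier rule produces \emph{two} multipliers: $\lambda\in L^2(0,1;Y)^*$ for the constraint $C_qv=0$, and $p\in L^2(0,1;X)^*$ for the dynamical constraint. The price is that $L^2(0,1;X)^*$ need not coincide with $L^2(0,1;X^*)$ when $X^*$ fails the Radon--Nikodym property, and the paper spends a separate lemma (its Lemma~\ref{lem4}, a ``hidden regularity'' argument) showing that the multiplier $p$ is nonetheless representable by an $X^*$-valued function and then absolutely continuous. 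You instead eliminate $q$ via the control-to-state map $v\mapsto q_v$ (which the paper itself invokes in the unconstrained analysis of Section~\ref{unconsmin}), so that only $\lambda$ arises as a genuine Lagrange multiplier; you then \emph{define} $p$ as the solution of the backward linear equation in $X^*$, which sidesteps Lemma~\ref{lem4} entirely. The surjectivity argument you sketch for $d\Phi_{\bar v}$ is essentially the same as the paper's Lemma~\ref{ift} (the paper writes the right inverse explicitly as $A_q=K_VC_q^*(C_qK_VC_q^*)^{-1}$, which gives a $\mathcal{C}^1$ family rather than merely a measurable one). Two small points: the identification of $\mu\in L^2(0,1;Y)^*$ with a $Y^*$-valued $L^2$ function already uses that $Y$ is reflexive---this follows from the surjectivity of $C_q$ (which makes $Y$ isomorphic to a Hilbert quotient of $V$), and the paper notes this explicitly---so your ``a~posteriori'' recovery of $\lambda\in L^2$ at the end is redundant. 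What your route buys is a cleaner argument that avoids the dual-space subtlety for $p$; what the paper's route buys is a formulation that does not rely on the implicit-function step $v\mapsto q_v$ and treats the two constraints on an equal footing.
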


\begin{remark}
This theorem is the extension of the usual PMP to our specific infinite dimensional setting. Any quadruple $q(\cdot),p(\cdot),v(\cdot),\lambda(\cdot)$ solution of the above equations is called an \textit{extremal}.
This is a "weak" maximum principle, in the sense that we derive the condition $\partial_v H$ along any extremal, instead of the stronger maximization condition
$$
H(q(t),p(t),v(t),\lambda(t)) = \max_{w\in \ker (C_{q(t)})} H(q(t),p(t),w,\lambda(t))
$$
for almost every $t\in [0,1]$.
Note however that, in the case of shape spaces, $v\mapsto H(q,p,v)$ is strictly concave and hence both conditions are equivalent.
\end{remark}

\begin{remark}
It is interesting to note that, if we set $V=H^\ell(\R^d,\R^d)$ with $\ell$ large enough, and if $L(q,v)=\frac{1}{2}\int_{\R^d}\vert v(x)\vert^2dx$ and $C_qv=\mathrm{div}\, v$, then the extremal equations given in Theorem \ref{PMP} coincide with the incompressible Euler equation. In other words, we recover the well-known fact that every divergence-free time-dependent vector field minimizing its $L^2$ norm (in $t$ and $x$) must satisfy the incompressible Euler equation (see \cite{Arnold}).
\end{remark}

\begin{remark}
Note that the surjectivity assumption is a strong one in infinite dimension. It is usually not satisfied in the case of shape spaces when $Y$ is infinite dimensional. 
For instance, consider the shape spaces $M=\mathcal{C}^0(S,\R^d)$, with $S$ a smooth compact Riemannian manifold. Let $V$ be an RKHS of vector fields of class $\mathcal{C}^{1}_0$, acting on $M$ as described in Section \ref{modshsp}. 
Let $T$ be a submanifold of $S$ of class $\mathcal{C}^1$. Set $Y=\mathcal{C}^0(T,\R^d)$, and consider the kinetic constraints  $C:M\times V\rightarrow Y$ defined by $C_qv=v\circ q_{\vert T}$. 
If $q$ is differentiable along $T$, then no nondifferentiable map $f\in Y$ is in $\Img( C_q)$.
\end{remark}

\begin{remark}
It is possible to replace $C_q$ with the orthogonal projection on $\ker(C_q)^\perp$, which is automatically surjective. However in this case the constraints become fiber-valued, and for the proof of our theorem to remain valid, one needs to assume that there exists a Hilbert space $V_1$ such that, for every $q_0\in M$, there exists a neighborhood $U$ of $q_0$ in $M$ such that $\bigcup_{q\in U}\ker(C_q)\sim U\times V_1$.
\end{remark}

\begin{remark}
An important consequence of the surjectivity of $C_q$ is that the norm on $Y$ is equivalent to the Hilbert norm induced from $V$ by $C_q$, for every $q\in M$.  The operator $C_qK_VC_q^*:Y^*\rightarrow Y$ is the isometry associated with this norm. In particular $Y$ must be reflexive, and hence $L^2(0,1;Y^*)=L^2(0,1;Y)^*$ (whereas we only have an inclusion for general Banach spaces).
\end{remark}

\begin{remark}
Theorem \ref{PMP} withstands several generalizations. For instance it remains valid whenever we consider nonlinear constraints $C(q,v)=0$ and a general Lagrangian $L$ of class $\mathcal{C}^1$ without any estimates and a nonlinear control system $\dot{q}=\xi(q,v)$, provided that $\xi$ is of class $\mathcal{C}^1$ and that $\partial_vC(q(t),v(t))$ is surjective for every $t\in[0,1]$. However, this requires to consider $v\in L^\infty(0,1;V)$ and makes the proof of the regularity of the Lagrange multipliers slightly more involved.
\end{remark}

Before proving Theorem \ref{PMP}, it can be noted that many versions of the PMP can be found in the existing literature for infinite-dimensional optimal control problems -- for instance, with dynamics consisting of partial differential equations, and however, most of the time, without constraint on the state. Versions of the PMP with state constraints can also be found in the literature (see the survey \cite{HSV}), most of the time in finite dimension, and, for the very few of them existing in infinite dimension, under the additional assumption that the constraints are of finite codimension.
To the best of our knowledge, no version does exist that would cover our specific framework, concerning shape spaces, group actions, with an infinite number of constraints on the acting diffeomorphisms.
The proof that we provide hereafter contains some subtleties such as Lemma \ref{lem4}, and hence Theorem \ref{PMP} is a nontrivial extension of the usual PMP.

\begin{proof}[Proof of Theorem \ref{PMP}]
We define the mapping $\Gamma:H_{q_0}^1(0,1;M)\times L^2(0,1;V)\rightarrow L^2(0,1;X)\times L^2(0,1;Y)$ by $\Gamma(q,v)=(\Gamma_1(q,v),\Gamma_2(q,v))$ %for all $(q,v)\in H_{q_0}^1(0,1;X)\times L^2(0,1;V)$, 
with $\Gamma_1(q,v)(t) = \dot{q}(t)-\xi_{q(t)}v(t)$ and $\Gamma_2(q,v)(t)=C_{q(t)}v(t)$ for almost every $t\in[0,1]$. The mapping $\Gamma$ stands for the constraints imposed to the unknowns of the optimal control problem.

The functional $J:H_{q_0}^1(0,1;M)\times L^2(0,1;V)\rightarrow\R$ and the mapping $\Gamma$ are of class $\mathcal{C}^1$, and their respective differentials at some point $(q,v)$ are given by
\begin{equation*}
\begin{split}
dJ_{(q,v)}. (\delta q,\delta v) = &
\int_0^1 \left( \langle\partial_qL(q(t),v(t)),\delta q(t)\rangle_{X^*,X}+\langle\partial_vL(q(t),v(t)),\delta v(t)\rangle_{V^*,V} \right) dt \\
&\quad 
+ \langle dg_{q(1)},\delta q(1)\rangle_{X^*,X}  ,
\end{split}
\end{equation*}
for all $(\delta q,\delta v)\in H_{0}^1(0,1;X)\times L^2(0,1;V)$, and $d\Gamma_{(q,v)} = (d{\Gamma_1}_{(q,v)}, d{\Gamma_2}_{(q,v)})$ with
\begin{equation*}
\begin{split}
(d{\Gamma_1}_{(q,v)}.(\delta q,\delta v)) (t) &=
\dot{\delta q}(t)-\partial_q\xi_{q(t)}v(t). \delta q(t)-\xi_{q(t)}\delta v(t) ,\\
(d{\Gamma_2}_{(q,v)}.(\delta q,\delta v)) (t) &= \partial_qC_{q(t)}v(t). \delta q(t)+C_{q(t)}\delta v(t) ,
\end{split}
\end{equation*}
for almost every $t\in[0,1]$.

\begin{lemma}\label{ift}
For every $(q,v)\in H_{q_0}^1(0,1;M)\times L^2(0,1;V)$, the linear continuous mapping
$
d\Gamma_{(q,v)}:H^1_{0}(0,1;X)\times L^2(0,1;V)\rightarrow L^2(0,1;X)\times L^2(0,1;Y)
$ 
is surjective. Moreover the mapping $\partial_q{\Gamma_1}_{(q,v)}:H^1_{0}(0,1;M)\rightarrow L^2(0,1;X)$ is an isomorphism.
\end{lemma}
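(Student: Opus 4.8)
I would prove Lemma~\ref{ift} in two steps: first the surjectivity of $d\Gamma_{(q,v)}$, then the isomorphism property of $\partial_q{\Gamma_1}_{(q,v)}$; in fact I would do these in the reverse order, since the second fact feeds into the first. For the map $\partial_q{\Gamma_1}_{(q,v)}:H^1_0(0,1;X)\to L^2(0,1;X)$, observe that it sends $\delta q$ to $t\mapsto \dot{\delta q}(t)-\partial_q\xi_{q(t)}v(t).\delta q(t)$. This is exactly the map associated with the linear time-dependent ODE $\dot{\delta q}(t)=A(t)\delta q(t)+f(t)$ with $A(t)=\partial_q\xi_{q(t)}v(t)\in\mathcal{L}(X)$ and $\delta q(0)=0$. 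The key observation is that $t\mapsto\Vert A(t)\Vert_{\mathcal{L}(X)}$ is in $L^2(0,1)$ (and hence in $L^1$): indeed, by the estimate $\Vert\xi_qv\Vert_X\leq\gamma\Vert v\Vert_V$ and the $\mathcal{C}^1$ regularity of $\xi$, one has a bound $\Vert\partial_q\xi_{q(t)}v(t)\Vert_{\mathcal{L}(X)}\leq c(\Vert q\Vert_{\mathcal{C}^0})\Vert v(t)\Vert_V$, and $v\in L^2(0,1;V)$. Given any $f\in L^2(0,1;X)$, the variation-of-constants formula produces a unique absolutely continuous $\delta q$ with $\delta q(0)=0$ solving the ODE; Gronwall's lemma gives $\sup_t\Vert\delta q(t)\Vert_X\leq C\Vert f\Vert_{L^2}$, and then $\dot{\delta q}=A(t)\delta q(t)+f(t)$ shows $\dot{\delta q}\in L^2(0,1;X)$ with $\Vert\dot{\delta q}\Vert_{L^2}\leq C'\Vert f\Vert_{L^2}$, so $\delta q\in H^1_0(0,1;X)$. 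Uniqueness is again Gronwall. Hence $\partial_q{\Gamma_1}_{(q,v)}$ is a continuous linear bijection between Banach spaces, thus an isomorphism by the open mapping theorem.

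Next, surjectivity of $d\Gamma_{(q,v)}=(d{\Gamma_1}_{(q,v)},d{\Gamma_2}_{(q,v)})$ onto $L^2(0,1;X)\times L^2(0,1;Y)$. Given a target $(f,h)\in L^2(0,1;X)\times L^2(0,1;Y)$, I would first use the pointwise surjectivity of $C_{q(t)}:V\to Y$: by the open mapping theorem applied at each time, together with the continuity in $t$ of $q(\cdot)$ (so that $C_{q(t)}$ varies continuously and, since $C_{q(t)}$ is surjective for every $q$, admits a bounded right inverse with norm controlled locally uniformly), one can select a measurable $\delta v_0(\cdot)\in L^2(0,1;V)$ with $C_{q(t)}\delta v_0(t)=h(t)$ for a.e.\ $t$. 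Here I would invoke the standard fact that a continuous family of surjective bounded operators between fixed Banach spaces admits a bounded measurable right-inverse selection; concretely, $C_q\mapsto C_q^*(C_qC_q^*)^{-1}$-type constructions work when one has a Hilbert structure, which one does here since $V$ is Hilbert and $C_qK_VC_q^*$ is the relevant isometry (cf.\ the remark following Theorem~\ref{PMP}). With such $\delta v_0$ fixed, set $\tilde f(t)=f(t)+\partial_q\xi_{q(t)}v(t)$ applied to something $\ldots$ — more precisely, I need $\delta q$ and a correction $\delta v_1$ with $\delta v_1(t)\in\ker C_{q(t)}$ solving $\dot{\delta q}-\partial_q\xi v.\delta q-\xi_{q(t)}\delta v_1=f+\xi_{q(t)}\delta v_0$ and $\partial_qC\,v.\delta q+C_{q(t)}\delta v_1=0$. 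Choosing $\delta v_1=0$ reduces the first equation to a linear ODE for $\delta q$ with source $f+\xi_{q(t)}\delta v_0\in L^2(0,1;X)$, which is solvable by the isomorphism property just established; then $h$ is already matched by $\delta v_0$ and we are done — except that the term $\partial_qC_{q(t)}v(t).\delta q(t)$ then spoils the second equation. So instead, after solving for a first $\delta q$, I would correct: at each $t$, since $C_{q(t)}$ is onto, pick $\delta v_1(t)$ with $C_{q(t)}\delta v_1(t)=-\partial_qC_{q(t)}v(t).\delta q(t)\in Y$ — this source is in $L^2(0,1;Y)$ because $\delta q\in L^\infty\subset L^2$ and $\partial_qC$ is $\mathcal{C}^0$ with $v\in L^2$ — and then iterate/solve the coupled linear system by a fixed-point (the coupling is through lower-order-in-$\delta q$ terms and a bounded right inverse of $C$, so a contraction on a short time interval, then patch, via Gronwall). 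This produces $(\delta q,\delta v_0+\delta v_1)$ hitting $(f,h)$.

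**Where the difficulty lies.** The routine part is the ODE/Gronwall bookkeeping for $\partial_q{\Gamma_1}$. The genuine obstacle is the measurable-selection issue in the surjectivity argument: one must produce $\delta v(\cdot)\in L^2(0,1;V)$ with $C_{q(t)}\delta v(t)$ prescribed, a.e.\ $t$, with an $L^2$ bound. In finite dimensions this is immediate, but here $Y$ is a general (reflexive, by the surjectivity remark) Banach space and $V$ is Hilbert; the clean way is to use the canonical right inverse $C_{q(t)}^\dagger = K_V C_{q(t)}^* (C_{q(t)} K_V C_{q(t)}^*)^{-1}:Y\to V$, which is well-defined and bounded precisely because $C_{q(t)} K_V C_{q(t)}^*:Y^*\to Y$ is the isometry for the quotient norm (surjectivity of $C_{q(t)}$ forces this operator to be invertible). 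Its norm is locally bounded and $t$-measurable because $q(\cdot)$ is continuous and $\xi,C$ are $\mathcal{C}^1$, so $t\mapsto C_{q(t)}^\dagger h(t)$ is measurable and $L^2$. This is the step that genuinely uses the surjectivity hypothesis of Theorem~\ref{PMP}, and once it is in hand, everything else is Gronwall estimates and the open mapping theorem.
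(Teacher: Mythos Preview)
Your proof is correct in substance and uses the same core ingredients as the paper --- linear ODE theory with Gronwall for the isomorphism statement, and the canonical right inverse $A_q=K_VC_q^*(C_qK_VC_q^*)^{-1}$ for the surjectivity statement --- but your surjectivity argument is more circuitous than necessary. The paper avoids your fixed-point iteration entirely by reversing the order of elimination: rather than first choosing $\delta v_0$ to hit $h$ and then iteratively correcting both equations, it directly sets
\[
\delta v(t)=A_{q(t)}\bigl(h(t)-\partial_qC_{q(t)}v(t).\delta q(t)\bigr),
\]
which satisfies the second equation identically for \emph{any} $\delta q$, and substitutes this into the first equation. The result is a single linear ODE for $\delta q$ with $L^2$ coefficients (the extra term $\xi_{q(t)}A_{q(t)}\partial_qC_{q(t)}v(t)$ is again linear in $v$ and bounded on the compact range of $q$), solved in one shot by the same Cauchy--Lipschitz/Gronwall argument you already used for $\partial_q\Gamma_1$. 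Your iterative scheme would converge to the same solution, but the direct substitution sidesteps the contraction/patching step entirely; in particular, you correctly identified the right inverse $A_q$ and its $\mathcal{C}^1$ dependence on $q$ (which is what makes the substituted ODE well-posed), so the only difference is organizational.
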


\begin{proof}
Let $(q,v)\in H_{q_0}^1(0,1;M)\times L^2(0,1;V)$ and $(a,b)\in L^2(0,1;X)\times L^2(0,1;Y)$. 
Let us prove that there exists $(\delta q,\delta v)\in H^1_{0}(0,1;X)\times L^2(0,1;V)$ such that
\begin{eqnarray}
a(t) &=& \dot{\delta q}(t)-\partial_q\xi_{q(t)}v(t). \delta q(t)-\xi_{q(t)}\delta v(t) ,\label{eeq1} \\
b(t) &=& \partial_qC_{q(t)}v(t). \delta q(t)+C_{q(t)}\delta v(t) ,\label{eeq2}
\end{eqnarray}
for almost every $t\in [0,1]$.

For every $\tilde q\in M$, $C_{\tilde q}:V\rightarrow Y$ is a surjective linear continuous mapping. It follows that $C_{\tilde q\vert (\ker (C_{\tilde q}))^{\perp}}:(\ker (C_{\tilde q}))^{\perp}\rightarrow Y$ is an isomorphism (note that $V=\ker (C_{tilde q})\oplus (\ker (C_{\tilde q}))^{\perp}$ since $V$ is Hilbert). We set $A_{\tilde q}=(C_{\tilde q\vert (\ker (C_{\tilde q}))^{\perp}})^{-1}=K_VC_{\tilde q}^*(C_{\tilde q}K_VC_{\tilde q}^*)^{-1}$. Note that $\tilde q\mapsto A_{\tilde q}$ is of class $\mathcal{C}^1$ in a neighbourhood of $q([0,1])$.

Assume for the moment that $\delta q(\cdot)$ is known. Then we choose $\delta v(\cdot)$ defined by
$\delta v(t)=A_{q(t)}\left( b(t)-\partial_qC_{q(t)}. \delta q(t) \right)$ for almost every $t\in [0,1]$, so that \eqref{eeq2} is satisfied. Plugging this expression into \eqref{eeq1} yields
$$
\dot{\delta q}(t)-\partial_q\xi_{q(t)}v(t).\delta q(t)-\xi_{q(t)}A_{q(t)}\left( b(t)-\partial_qC_{q(t)}v(t).\delta q(t) \right) = a(t),
$$
for almost every $t\in[0,1]$. This is a well-posed linear differential equation with square-integrable coefficients in the Banach space $X$, which has a unique solution $\delta q\in H^1_0(0,1;X)$ such that $\delta q(0)=0$. 
This proves the statement.

Proving that the mapping $\partial_q{\Gamma_1}_{(q,v)}:H^1_{0}(0,1;M)\rightarrow L^2(0,1;X)$, defined by $(\partial_q{\Gamma_1}_{(q,v)}.\delta q)(t) = \dot{\delta q}(t)-\partial_q\xi_{q(t)}v(t).\delta q(t)$ for almost every $t\in[0,1]$, is an isomorphism follows the same argument, by Cauchy uniqueness.
\end{proof}

Let $(q,v)\in H_{q_0}^1(0,1;M)\times L^2(0,1;V)$ be an optimal solution of the optimal control problem. In other words, $(q,v)$ is a minimizer of the problem of minimizing the functional $J$ over the set of constraints $\Gamma^{-1}(\{ 0\})$ (which is a $\mathcal{C^1}$ manifold as a consequence of Lemma \ref{ift} and of the implicit function theorem).
Since $d\Gamma_{(q,v)}$ is surjective (note that this fact is essential since we are in infinite dimension), it follows from \cite[Theorem 4.1]{Kurcyusz} that there exists a nontrivial Lagrange multiplier
$ (p,\lambda)\in L^2(0,1;X)^*\times L^2(0,1;Y)^*$ such that
$dJ_{(q,v)}+(d\Gamma_{(q,v)})^*(p,\lambda)=0$.
Moreover since $Y$ is reflexive one has $L^2(0,1;Y)^*=L^2([0,1],Y^*)$, and hence we can identify $\lambda$ with a square-integrable $Y^*$-valued measurable mapping, so that the Lagrange multipliers relation yields
\begin{equation}\label{djdg}
\begin{split}
0 & = \big\langle dJ_{(q,v)}+(d\Gamma_{(q,v)})^*(p,\lambda), (\delta q,\delta v)\big\rangle \\
&= 
\big\langle p,\dot{\delta q}\big\rangle_{L^2(0,1;X)^*,L^2(0,1;X)} - \big\langle p, \partial_q\xi_{q}v.\delta q\big\rangle_{L^2(0,1;X)^*,L^2(0,1;X)}- \big\langle p, \xi_{q}\delta v\big\rangle_{L^2(0,1;X)^*,L^2(0,1;X)}
\\
&\quad + \int_0^1\left\langle \partial_qL(q(t),v(t)) + (\partial_qC_{q(t)}v(t))^*
\lambda(t) , \delta q(t)\right\rangle_{X^*,X} dt \\
&\quad + \int_0^1 \left\langle \partial_vL(q(t),v(t)) + C_{q(t)}^* \lambda(t) , \delta v(t)\right\rangle_{V^*,V} dt 
 + \big\langle dg_{q(1)}, \delta q(1) \big\rangle_{X^*,X} ,
\end{split}
\end{equation}
for all $(p,\lambda)\in L^2(0,1;X)^*\times L^2([0,1],Y^*)$ and all $(\delta q,\delta v)\in H^1_0(0,1;M)\times L^2(0,1;V)$.
Note that the space $L^2(0,1;X)^*$ can be different from $L^2(0,1;X^*)$ (unless $X^*$ satisfies the Radon-Nikodym property, but there is no reason to consider such a Banach space $X$), and hence \textit{a priori} $p$ cannot be obviously identified with a square-integrable $X^*$-valued measurable mapping. Anyway, in the next lemma we show that this identification is possible, due to a hidden regularity property in \eqref{djdg}.

\begin{lemma}\label{lem4}
We can identify $p$ with an element of $L^2([0,1],X^*)$, so that
$$
\big\langle p, r \big\rangle_{L^2(0,1;X^*),L^2(0,1;X)} = \int_0^1 \big\langle p(t), r(t) \big\rangle_{X^*,X} \, dt,
$$
for every $r\in L^2(0,1;X)$.
\end{lemma}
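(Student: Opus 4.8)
The plan is to use only the part of the Lagrange multiplier relation \eqref{djdg} that involves $\delta q$, combined with the isomorphism property of $\partial_q{\Gamma_1}_{(q,v)}$ established in Lemma \ref{ift}. Taking $\delta v=0$ in \eqref{djdg} gives, for every $\delta q\in H^1_0(0,1;X)$,
\[
\big\langle p,\ \partial_q{\Gamma_1}_{(q,v)}.\delta q\big\rangle_{L^2(0,1;X)^*,L^2(0,1;X)}
= -\int_0^1\langle f(t),\delta q(t)\rangle_{X^*,X}\,dt-\langle dg_{q(1)},\delta q(1)\rangle_{X^*,X},
\]
where $(\partial_q{\Gamma_1}_{(q,v)}.\delta q)(t)=\dot{\delta q}(t)-B(t)\delta q(t)$ with $B(t)=\partial_q\xi_{q(t)}v(t)\in\mathcal{L}(X)$, and $f(t)=\partial_qL(q(t),v(t))+(\partial_qC_{q(t)}v(t))^*\lambda(t)\in X^*$. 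Using the estimate \eqref{estilag} to bound $\partial_qL(q(t),v(t))$ in terms of $\Vert v(t)\Vert_V$, using the linearity of $\xi_q$ and $C_q$ in $v$ (so that $\Vert B(t)\Vert$ is controlled along the compact curve $q([0,1])$ by $\Vert v(t)\Vert_V$), and using the square-integrability of $\lambda$, one checks that $t\mapsto\Vert B(t)\Vert$ and $t\mapsto\Vert f(t)\Vert_{X^*}$ are integrable, so that $B\in L^1(0,1;\mathcal{L}(X))$ and $f\in L^1(0,1;X^*)$.

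Next I would introduce the costate $\rho$ as the unique solution (by Cauchy--Lipschitz for linear Carath\'eodory equations in the Banach space $X^*$) of the backward linear equation
\[
\dot\rho(t)=-B(t)^*\rho(t)+f(t),\qquad \rho(1)=-dg_{q(1)}.
\]
As the solution of a linear differential equation, $\rho$ is continuous on $[0,1]$, hence strongly measurable (its range is compact, thus separable) and bounded, so $\rho\in L^2(0,1;X^*)$. Fix now $r\in L^2(0,1;X)$ and let $\delta q=(\partial_q{\Gamma_1}_{(q,v)})^{-1}r\in H^1_0(0,1;X)$, so that $\dot{\delta q}(t)=B(t)\delta q(t)+r(t)$ for a.e. $t$ and $\delta q(0)=0$. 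Differentiating $t\mapsto\langle\rho(t),\delta q(t)\rangle_{X^*,X}$, and using the two differential equations together with $\langle B(t)^*\rho(t),\delta q(t)\rangle=\langle\rho(t),B(t)\delta q(t)\rangle$, one obtains $\frac{d}{dt}\langle\rho(t),\delta q(t)\rangle=\langle f(t),\delta q(t)\rangle+\langle\rho(t),r(t)\rangle$ for a.e. $t$. Integrating over $[0,1]$ and using $\delta q(0)=0$ and $\rho(1)=-dg_{q(1)}$ yields
\[
\int_0^1\langle\rho(t),r(t)\rangle_{X^*,X}\,dt
= -\langle dg_{q(1)},\delta q(1)\rangle_{X^*,X}-\int_0^1\langle f(t),\delta q(t)\rangle_{X^*,X}\,dt
= \big\langle p,r\big\rangle_{L^2(0,1;X)^*,L^2(0,1;X)},
\]
the last equality being the displayed identity of the first paragraph applied to this particular $\delta q$. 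Since $r\in L^2(0,1;X)$ was arbitrary, this is exactly the asserted identification of $p$ with $\rho\in L^2(0,1;X^*)$, and it moreover exhibits $p$ as the costate, the natural object for the adjoint equation of Theorem \ref{PMP}.

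The conceptual point is the reduction just performed: although a general element of $L^2(0,1;X)^*$ need not be represented by an $X^*$-valued function when $X^*$ lacks the Radon--Nikodym property, the particular $p$ produced by the multiplier rule is, because \eqref{djdg} forces it, through the isomorphism $\partial_q{\Gamma_1}_{(q,v)}$, to solve a linear adjoint ODE whose solution is automatically continuous with values in $X^*$. I expect the only genuinely delicate step to be the regularity bookkeeping of the first paragraph, namely checking $B\in L^1(0,1;\mathcal{L}(X))$ and $f\in L^1(0,1;X^*)$, where hypothesis \eqref{estilag}, the square-integrability of $\lambda$, and the precise meaning of $\partial_q\xi_{q(t)}v(t)$ all enter.
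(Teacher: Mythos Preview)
Your proof is correct and takes a genuinely different route from the paper's. The paper does not construct the costate first; instead, for $r\in L^2(0,1;X)$ it sets $\delta q(s)=\int_0^s r(t)\,dt$, packages the remaining terms into an abstract functional $\alpha\in L^2(0,1;X)^*$, rewrites $\delta q=\int_0^1 A_t(r(t))\,dt$ with $A_t x=\chi_{[t,1]}x$, and interchanges the Bochner integral with the duality pairing to obtain $p(t)=A_t^*\alpha-dg_{q(1)}$. Only afterwards does the paper deduce that $\alpha\in L^2(0,1;X^*)$ and that $p\in H^1(0,1;X^*)$ solves the adjoint equation.

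Your approach inverts this logic: you build $\rho$ as the Carath\'eodory solution of the adjoint linear ODE in $X^*$ and then verify, via the product-rule computation on $t\mapsto\langle\rho(t),\delta q(t)\rangle$, that $\rho$ represents $p$. This is more direct and has the bonus of simultaneously establishing $p\in H^1(0,1;X^*)$ together with the adjoint equation, which the paper derives in a separate subsequent step. The price you pay is that you must check up front the integrability $B\in L^1(0,1;\mathcal{L}(X))$ and $f\in L^1(0,1;X^*)$; the paper's $A_t$-argument sidesteps this at the level of Lemma~\ref{lem4} by keeping $\alpha$ abstract, though the same integrability is implicitly needed elsewhere (already in Lemma~\ref{ift}). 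Your invocation of linearity of $\xi_q$ and $C_q$ in $v$ to obtain these bounds is legitimate in the shape-space setting that motivates the whole paper, and is effectively what the paper uses when it speaks of ``square-integrable coefficients'' in the proof of Lemma~\ref{ift}.
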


\begin{proof}[Proof of Lemma \ref{lem4}]
For every $r\in L^2(0,1;X)$, we define $\delta q\in H^1_0(0,1;X)$ by $\delta q(s)=\int_0^sr(t)\, dt$ for every $s\in [0,1]$ (Bochner integral in the Banach space $X$), so that $r=\dot{\delta q}$.
Defining $\alpha\in L^2(0,1;X)^*$ by
\begin{equation}\label{alpha}
\begin{split}
\big\langle \alpha, f \big\rangle_{L^2(0,1;X)^*, L^2(0,1;X)} = &
\ \big\langle p , \partial_q\xi_{q}v.f \big\rangle_{L^2(0,1;X)^*, L^2(0,1;X)} \\
& \ - \int_0^1\left\langle \partial_qL(q(t),v(t))+(\partial_qC_{q(t)}v(t))^* \lambda(t) , f(t)\right\rangle_{X^*,X} dt ,
\end{split}
\end{equation}
for every $f\in L^2(0,1;X)$, and taking $\delta v=0$ in \eqref{djdg}, we get
\begin{equation}\label{train16h06}
\big\langle p, r\big\rangle_{L^2(0,1;X)^*, L^2(0,1;X)} = 
\big\langle \alpha, \delta q\big\rangle_{L^2(0,1;X)^*, L^2(0,1;X)} - \big\langle dg_{q(1)}, \delta q(1) \big\rangle_{X^*,X} .
\end{equation}
Let us express $\delta q$ in another way with respect to $r$. By definition, one has $\delta q(s) = \int_0^s r(t)\, dt$ for every $s\in [0,1]$, and this can be also written as $\delta q(s) = \int_0^1 \chi_{[t,1]}(s) r(t)\, dt$, with $\chi_{[t,1]}(s)=1$ whenever $s\in [t,1]$ and $0$ otherwise. In other words, one has $\delta q = \int_0^1 \chi_{[t,1]} r(t)\, dt$ (Bochner integral). For every $t\in[0,1]$, we define the operator $A_t:X\rightarrow L^2(0,1;X)$ by $A_tx=\chi_{[t,1]}x$. It is clearly linear and continuous. Then, we have $\delta q = \int_0^1 A_t(r(t))\, dt$, and therefore, using \eqref{train16h06},
\begin{equation*}
\big\langle p, r\big\rangle_{L^2(0,1;X)^*, L^2(0,1;X)} = 
\Big\langle \alpha, \int_0^1 A_t(r(t))\, dt\Big\rangle_{L^2(0,1;X)^*, L^2(0,1;X)} - \big\langle dg_{q(1)}, \int_0^1 r(t)\, dt \big\rangle_{X^*,X} .
\end{equation*}
Now, interchanging the Bochner integrals and the linear forms, we infer that
$$
\big\langle p, r\big\rangle_{L^2(0,1;X)^*, L^2(0,1;X)} =
\int_0^1 \big\langle \alpha, A_t(r(t))\big\rangle_{L^2(0,1;X)^*, L^2(0,1;X)} \, dt
- \int_0^1 \big\langle dg_{q(1)}, r(t)\big\rangle_{X^*,X}\, dt ,
$$
and then, using the adjoint $A_t^*: L^2(0,1;X)^*\rightarrow X^*$, we get
$$
\big\langle p, r\big\rangle_{L^2(0,1;X)^*, L^2(0,1;X)} =
\int_0^1 \big\langle A_t^*\alpha - dg_{q(1)}, r(t)\big\rangle_{X^*,X} \, dt .
$$
Since this identity holds true for every $r\in L^2(0,1;X)$, it follows that $p$ can be identified with an element of $L^2(0,1;X^*)$, still denoted by $p$, with $p(t) = A_t^*\alpha - dg_{q(1)}$ for almost every $t\in[0,1]$.
\end{proof}

Still using the notation introduced in the proof of Lemma \ref{lem4}, now that we know that $p\in L^2(0,1;X^*)$, we infer from \eqref{alpha} that $\alpha$ can as well be identified with an element of $L^2([0,1],X^*)$, with
$$
\alpha(t)=(\partial_q\xi_{q(t)}v(t))^*p(t) - \partial_qL(q(t),v(t))-(\partial_qC_{q(t)}v(t))^*
\lambda(t),
$$ 
for almost every $t\in[0,1]$. Note that $\alpha(t)=\partial_qH(q(t),p(t),v(t),\lambda(t))$, where the Hamiltonian $H$ is defined by \eqref{defHam}.

Since $\alpha\in L^2(0,1;X^*)$, we have, for every $x\in X$, 
\begin{equation*}
\begin{split}
\langle A_t^*\alpha,x\rangle_{X^*,X} 
&= \big\langle\alpha, \chi_{[t,1]}x\big\rangle_{L^2(0,1,X^*),L^2(0,1;X)} 
= \int_0^1 \langle\alpha(s), \chi_{[t,1]}(s)x\rangle_{X^*,X} \, ds \\
&= \int_t^1 \langle\alpha(s), x\rangle_{X^*,X}\, ds 
= \Big\langle\int_t^1\alpha(s)\, ds, x\Big\rangle_{X^*,X} ,
\end{split}
\end{equation*}
and therefore $A_t^*\alpha=\int_t^1\alpha(s)\, ds$ for every $t\in[0,1]$.
It follows that $p(t) = A_t^*\alpha - dg_{q(1)} = \int_t^1\alpha(s)\, ds  - dg_{q(1)}$, and hence, that $p\in H^1(0,1;X^*)$ and that $p(\cdot)$ satisfies the differential equation $\dot{p}(t)=-\alpha(t)=-\partial_qH(q(t),p(t),v(t),\lambda(t))$ for almost every $t\in[0,1]$ and $p(1)+dg_{q(1)}=0$. 

Finally, taking $\delta q=0$ in \eqref{djdg} yields
$$
\int_0^1 \big\langle \partial_vL(q(t),v(t))+C_{q(t)}^*\lambda(t)-\xi_{q(t)}^*p(t) , \delta v(t)\big\rangle)_{V^*,V}\, dt=0,
$$
for every $\delta v\in L^2(0,1;X)$, and hence $\partial_vL(q(t),v(t))+C_{q(t)}^* \lambda(t)-\xi_{q(t)}^*p(t)=0$ for almost every $t\in[0,1]$, which exactly means that $\partial_vH(q(t),p(t),v(t),\lambda(t))=0$.
The theorem is proved.
\end{proof}

\subsection{The geodesic equations in a shape space}\label{geodeqshsp}
We use the notations introduced in Section \ref{modshsp}, and consider a shape space $M$ of order $\ell\in\N,$ $V$ an RKHS  of vector fields of class $\mathcal{C}^{\ell+1}_0$ on $\R^d$, and we set $L(q,v)=\frac{1}{2}\Vert v\Vert_V^2$. We assume that $C$ and $g$ are at least of class $\mathcal{C}^1$ and that $C_q:V\rightarrow Y$ is surjective for every $q\in M$. 

In this context $\xi$ is of class $\mathcal{C}^1$. Let us apply Theorem \ref{PMP}.
We have $\partial_vH(q,p,v,\lambda)=\xi_{q}^*p-C_{q}^*\lambda-( v,\cdot)_V$, and the condition $\partial_vH(q,p,v,\lambda)=0$ is equivalent to $v=K_V(\xi_{q}^*p-C_{q}^*\lambda)$.
Then we have $\partial_pH=\xi_qv=\xi_{q}K_V(\xi_{q}^*p-C_{q}^*\lambda)=K_{q}p-\xi_{q}K_VC_{q}^*\lambda$, where $K_q=\xi_{q}K_V\xi_{q}^*$ is defined by \eqref{Kq}.
Besides, $C_qv=C_qK_V\xi_{q}^*p-C_qK_VC_{q}^*\lambda=0$ if and only if
$C_qK_VC_q^*\lambda=C_qK_V\xi_{q}^*p$.
Since $C_q$ is surjective, it follows that $C_qK_VC_q^*$ is invertible, and hence
$\lambda=\lambda_{q,p}=(C_qK_VC_q^*)^{-1}C_qK_V\xi_{q}^*p$.
The mapping $(q,p)\mapsto \lambda_{q,p}$ defined as such is of class $\mathcal{C}^1$ and is linear in $p$. In particular, $v=v_{q,p}=K_V(\xi_{q}^*p-C_{q}^*\lambda_{q,p})$ is a function of class $\mathcal{C}^1$ of $q$ and $p$ and is linear in $p$. We have obtained the following result.

\begin{theorem}[Geodesic equations in shape spaces]\label{geodeq}
Let $(q(\cdot),v(\cdot))\in H^1_{q_0}(0,1;M)\times L^2(0,1;V)$ be a solution of Problem \ref{diffprob1}. There exists $p\in H^1(0,1;X^*)$ such that
$$
v(t)=v_{q(t),p(t)}=K_V\left(\xi_{q(t)}^*p(t)-C_{q(t)}^*\lambda(t) \right),
$$
for almost every $t\in[0,1]$, and $p(\cdot)$ satisfies $p(1)+dg_{q(1)}=0$ and the geodesic equations
\begin{equation}\label{geodeq1}
\begin{aligned}
\dot{q}(t)&=K_{q(t)}p(t)-\xi_{q(t)}K_VC_{q(t)}^*\lambda(t),\\ 
\dot{p}(t)&=-\partial_q\left\langle p(t),\xi_{q(t)}v(t)\right\rangle_{X^*,X} + \partial_q\left\langle\lambda(t), C_{q(t)}v(t)\right\rangle_{Y^*,Y},
\end{aligned}
\end{equation}
for almost every $t\in[0,1]$, with
$$
\lambda(t)=\lambda_{q(t),p(t)}=(C_{q(t)}K_VC_{q(t)}^*)^{-1}C_{q(t)}K_V\xi_{q(t)}^*p(t).
$$
Moreover the mapping $t\mapsto \frac{1}{2}\Vert v(t)\Vert^2$ is constant, and one has
$$
J(v)=\frac{1}{2}\Vert v(0)\Vert^2_V+g(q(1)).
$$
\end{theorem}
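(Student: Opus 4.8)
The plan is to apply Theorem \ref{PMP} directly to Problem \ref{diffprob1} with the Lagrangian $L(q,v)=\frac12\Vert v\Vert_V^2$, which satisfies the estimate \eqref{estilag} since $dL_{q,v}$ depends only on $v$ through $(v,\cdot)_V$ (so $\gamma_0\equiv 0$ works, up to adjusting for the dependence on $q$ of the norm, but here the norm is $q$-independent). Theorem \ref{PMP} provides $p\in H^1(0,1;X^*)$ and $\lambda\in L^2(0,1;Y^*)$ with $p(1)+dg_{q(1)}=0$ and the three Hamiltonian relations. The first task is to solve the condition $\partial_v H(q,p,v,\lambda)=0$ for $v$: since $\partial_v H(q,p,v,\lambda)=\xi_q^*p-C_q^*\lambda-(v,\cdot)_V$ as an element of $V^*$, applying $K_V$ yields $v=K_V(\xi_q^*p-C_q^*\lambda)$. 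Then I would substitute this into the constraint $C_qv=0$ to get $C_qK_VC_q^*\lambda = C_qK_V\xi_q^*p$, and use surjectivity of $C_q$ — which forces $C_qK_VC_q^*:Y^*\to Y$ to be invertible (this is noted in a remark after Theorem \ref{PMP}, and follows since $C_qK_VC_q^*$ is the Gram operator of the injective map $C_q^*$) — to write $\lambda=\lambda_{q,p}=(C_qK_VC_q^*)^{-1}C_qK_V\xi_q^*p$, hence $v=v_{q,p}$, both smooth in $(q,p)$ and linear in $p$.

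Next I would insert these closed-form expressions into $\dot q = \partial_pH$ and $\dot p = -\partial_qH$. For the $\dot q$-equation, $\partial_pH = \xi_qv = \xi_qK_V(\xi_q^*p-C_q^*\lambda) = K_qp - \xi_qK_VC_q^*\lambda$ with $K_q=\xi_qK_V\xi_q^*$ as in \eqref{Kq}, giving the first line of \eqref{geodeq1}. For the $\dot p$-equation, I would just expand $\partial_q H = \partial_q\langle p,\xi_qv\rangle - \partial_q L - \partial_q\langle\lambda,C_qv\rangle$; since $L$ here does not depend on $q$, and since in computing $\partial_q$ one treats $v$ and $\lambda$ as fixed (the envelope-type simplification is automatic because $\partial_vH=0$ along the extremal, and $C_qv=0$ kills the variation of $\lambda$ against $C_qv$), this reduces to the stated second line of \eqref{geodeq1}. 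This is all routine bookkeeping once the structure is set up.

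It remains to prove the two "moreover" claims: $t\mapsto\frac12\Vert v(t)\Vert^2$ is constant and $J(v)=\frac12\Vert v(0)\Vert_V^2+g(q(1))$. The second follows trivially from the first: $J(v)=\int_0^1\frac12\Vert v(t)\Vert_V^2\,dt+g(q(1))=\frac12\Vert v(0)\Vert_V^2+g(q(1))$. For constancy of the energy I would compute $\frac{d}{dt}H(q(t),p(t),v(t),\lambda(t))$ along the extremal. The standard argument: $\frac{d}{dt}H = \partial_qH\cdot\dot q + \langle\dot p,\partial_pH\rangle + \partial_vH\cdot\dot v + \partial_\lambda H\cdot\dot\lambda$; the $\partial_vH$ term vanishes by the third PMP relation, the $\partial_\lambda H = -C_qv$ term vanishes by the constraint, and $\partial_qH\cdot\dot q + \langle\dot p,\partial_pH\rangle = \partial_qH\cdot\partial_pH - \partial_qH\cdot\partial_pH = 0$ by the first two PMP relations — so $H$ is constant (this needs a small care about differentiability in $t$ of $v(t)=v_{q(t),p(t)}$, which holds because $(q,p)\mapsto v_{q,p}$ is $\mathcal C^1$ and $q,p\in H^1$). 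Finally, evaluating $H$ along the extremal: $H = \langle p,\xi_qv\rangle - \frac12\Vert v\Vert_V^2 - \langle\lambda,C_qv\rangle = \langle p,\xi_qv\rangle - \frac12\Vert v\Vert_V^2$ using the constraint, and $\langle p,\xi_qv\rangle = \langle\xi_q^*p,v\rangle_{V^*,V} = (K_V\xi_q^*p,v)_V$; writing $K_V\xi_q^*p = v + K_VC_q^*\lambda$ and noting $(K_VC_q^*\lambda,v)_V=\langle\lambda,C_qv\rangle=0$, we get $\langle p,\xi_qv\rangle=\Vert v\Vert_V^2$, so $H=\frac12\Vert v\Vert_V^2$, which is therefore constant. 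The main obstacle I anticipate is the technical justification that $t\mapsto v(t)$ inherits enough regularity ($H^1$ in $t$) to legitimately differentiate $H$ along the trajectory — this is where one must invoke the $\mathcal C^1$ regularity of $(q,p)\mapsto v_{q,p}$ established just before the theorem statement.
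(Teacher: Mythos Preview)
Your proposal is correct and follows essentially the same route as the paper: apply Theorem~\ref{PMP}, solve $\partial_vH=0$ for $v$ via $K_V$, substitute into the constraint $C_qv=0$ and invert $C_qK_VC_q^*$ (using surjectivity of $C_q$) to get $\lambda_{q,p}$, then read off the geodesic equations from $\dot q=\partial_pH$ and $\dot p=-\partial_qH$. The paper carries out exactly this computation in the paragraph preceding the theorem statement; your additional argument for the constancy of $\tfrac12\Vert v(t)\Vert_V^2$ via conservation of the (autonomous) Hamiltonian along the extremal, together with the identity $H=\tfrac12\Vert v\Vert_V^2$ on the constraint set, fills in the ``moreover'' claims that the paper states but does not prove explicitly.
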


\begin{remark}\label{rem11}
Defining the so-called reduced Hamiltonian $h:M\times X^*\rightarrow \R$ by 
$$h(q,p)=H(q,p,v_{q,p},\lambda_{q,p}),$$
we have a priori $\partial_qh=\partial_qH+\partial_vH(q,p,v_{q,p},\lambda_{q,p}) \circ\partial_q(v_{q,p})+\partial_\lambda H(q,p,v_{q,p},\lambda_{q,p})\circ \partial_q(\lambda_{q,p})$. But since $\lambda_{q,p}$ and $v_{q,p}$ are such that $\partial_vH(q,p,v_{q,p},\lambda_{q,p})=0$ and $C_qv_{q,p}=\partial_\lambda H(q,p,v_{q,p},\lambda_{q,p})=0$, it follows that $\partial_qh(q,p)=\partial_qH(q,p,v_{q,p},\lambda_{q,p})$. Similarly, we have $\partial_ph(q,p)=\partial_pH(q,p,v_{q,p},\lambda_{q,p})$.
Therefore, in Theorem \ref{geodeq}, the geodesics are the solutions of the Hamiltonian system
$$
\dot{q}(t)=\partial_ph(q(t),p(t)),\quad  \dot{p}(t)=-\partial_qh(q(t),p(t)). 
$$
\end{remark}

\begin{corollary}\label{geodeq3}
Assume that the mappings $C$ and $\xi$ are of class $\mathcal{C}^2$. Then $h$ is of class $\mathcal{C}^2$ as well, and for every $(q_0,p_0)\in M\times X^*$, there exists $\varepsilon>0$ and there exists a unique solution $(q,p):[0,\epsilon]\rightarrow M\times X^*$ of the geodesic equations \eqref{geodeq1} such that $(q(0),p(0))=(q_0,p_0)$.
\end{corollary}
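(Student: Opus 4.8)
The plan is to read the geodesic equations \eqref{geodeq1} as an autonomous Hamiltonian differential equation on the open subset $M\times X^*$ of the Banach space $X\times X^*$, and to apply the Cauchy--Lipschitz theorem in Banach spaces once the associated vector field has been shown to be of class $\mathcal{C}^1$. By Remark \ref{rem11}, a curve $(q(\cdot),p(\cdot))$ solves \eqref{geodeq1} if and only if it solves $\dot q(t)=\partial_ph(q(t),p(t))$ and $\dot p(t)=-\partial_qh(q(t),p(t))$, where $h(q,p)=H(q,p,v_{q,p},\lambda_{q,p})$ is the reduced Hamiltonian. Hence it is enough to prove that $h$ is of class $\mathcal{C}^2$: the vector field $(q,p)\mapsto(\partial_ph(q,p),-\partial_qh(q,p))$ is then of class $\mathcal{C}^1$, hence locally Lipschitz, on $M\times X^*$.

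To get $h\in\mathcal{C}^2$, I would simply rerun, with one extra degree of differentiability, the computation preceding Theorem \ref{geodeq}, starting from
$$
\lambda_{q,p}=(C_qK_VC_q^*)^{-1}C_qK_V\xi_q^*p,\qquad v_{q,p}=K_V\bigl(\xi_q^*p-C_q^*\lambda_{q,p}\bigr).
$$
The hypothesis that $\xi$ and $C$ are of class $\mathcal{C}^2$ makes $q\mapsto\xi_q$ and $q\mapsto C_q$ of class $\mathcal{C}^2$ into the corresponding spaces of bounded operators; transposition $T\mapsto T^*$ is a linear isometry, hence smooth, so $q\mapsto\xi_q^*$ and $q\mapsto C_q^*$ are of class $\mathcal{C}^2$; $K_V$ is a fixed isometric isomorphism, hence smooth; operator composition is a bounded bilinear map, hence smooth; and inversion $T\mapsto T^{-1}$ is smooth on the open set of isomorphisms (Neumann series). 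Since $C_q$ is surjective for every $q$, the operator $C_qK_VC_q^*\colon Y^*\to Y$ is invertible (as recalled just before the statement), so $q\mapsto(C_qK_VC_q^*)^{-1}$ is of class $\mathcal{C}^2$. Composing, $(q,p)\mapsto\lambda_{q,p}$ and then $(q,p)\mapsto v_{q,p}$ are of class $\mathcal{C}^2$ and affine (indeed linear) in $p$. Finally $H(q,p,v,\lambda)=\langle p,\xi_qv\rangle_{X^*,X}-\frac{1}{2}\Vert v\Vert_V^2-\langle\lambda,C_qv\rangle_{Y^*,Y}$ is of class $\mathcal{C}^2$ in all its arguments, being a sum of bounded bilinear pairings composed with the $\mathcal{C}^2$ maps $\xi$, $C$ and of a quadratic form; hence $h(q,p)=H(q,p,v_{q,p},\lambda_{q,p})$ is of class $\mathcal{C}^2$.

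It then remains to invoke Cauchy--Lipschitz. As in Remark \ref{rem11} one has $\partial_ph(q,p)=\xi_qv_{q,p}$, which lies in $X$ through the canonical injection $X\hookrightarrow X^{**}$ (cf.\ the remark after \eqref{defHam}), while $\partial_qh(q,p)\in X^*$; thus $(q,p)\mapsto(\partial_ph(q,p),-\partial_qh(q,p))$ is a genuine $\mathcal{C}^1$, hence locally Lipschitz, vector field on the open subset $M\times X^*$ of $X\times X^*$. By the Cauchy--Lipschitz theorem on a Banach space (see \cite{AgrachevSachkov} or \cite[Chapter 11]{TBOOK}), for every $(q_0,p_0)\in M\times X^*$ there exist $\varepsilon>0$ and a unique solution $(q,p)\colon[0,\varepsilon]\to M\times X^*$ of \eqref{geodeq1} with $(q(0),p(0))=(q_0,p_0)$. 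The only steps that are not automatic bookkeeping are the invertibility of $C_qK_VC_q^*$ --- which uses the surjectivity of $C_q$ --- and the observation that $\partial_ph$ takes values in $X$ rather than merely in $X^{**}$; the remaining regularity assertions follow from the stability of the class $\mathcal{C}^2$ under composition with bounded multilinear maps and under operator inversion.
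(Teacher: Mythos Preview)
Your proposal is correct and follows exactly the route the paper intends: the corollary is stated without proof, as an immediate consequence of Remark \ref{rem11} together with the $\mathcal{C}^1$ regularity of $(q,p)\mapsto(\lambda_{q,p},v_{q,p})$ established just before Theorem \ref{geodeq}, upgraded to $\mathcal{C}^2$ under the stronger hypothesis, and then Cauchy--Lipschitz. You have simply written out the bookkeeping that the paper leaves implicit.
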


Note that for most of shape spaces (at least, for all shape spaces given as examples in this paper), the mapping $\xi$ is of class $\mathcal{C}^2$ whenever $V$ is an RKHS of vector fields of class $\mathcal{C}^{\ell+2}$.

\begin{example}
A geodesic $(q(t),p(t))=(x_1(t),\dots,x_n(t),p_1(t),\dots,p_n(t))$ on the landmark space $\mathrm{Lmk}_d(n)$ must satisfy the equations
$$
\dot{x}_i(t)=\sum_{j=1}^nK(x_i(t),x_j(t))p_j(t),
\quad
\dot{p}_j(t)=-\frac{1}{2}\sum_{j=1}^n\partial_{x_i}\left(p_i(t)^TK(x_i(t),x_j(t))p_j(t)\right),
$$
where $K$ is the kernel of $V$.
\end{example}

\begin{remark}\label{rem_propequiv}
Assume that the constraints are kinetic, i.e., are of the form $C_q\xi_qv=0$ with $C_q:X\rightarrow Y$. Then $\dot{q}(t)=K_{q(t)}(p(t)-C_{q(t)}^*\lambda(t))$ and $\lambda_{q,p}=(C_qK_qC_q^*)^{-1}C_qK_qp$.
Note that even if $K_q$ is not invertible, the assumption that $C_q\xi_q$ is surjective means that $C_qK_qC_q^*$ is invertible. 
%Now, denoting $\pi_qp=p-C_{q}^*\lambda(q,p)$, by carefully gathering terms together we get
%$$
%\partial_qH (q,p,v_{q,p},\lambda_{q,p})=\frac{1}{2}\langle\pi_qp, \partial_qK_q\pi_qp \rangle_{X^*,X}-\langle\lambda_{q,p}, \partial_qC_{q}.K_q\pi_qp\rangle_{Y^*,Y},
%$$
%and hence the geodesic equations \eqref{geodeq1} in shape spaces can be rewritten as
%\begin{equation}\label{geodeq2}
%\begin{split}
%\dot{q}(t)&=K_{q(t)}\pi_{q(t)}p(t),\\
%\dot{p}(t)&=-\frac{1}{2}\langle\pi_{q(t)}p(t), \partial_qK_{q(t)}\pi_{q(t)}p(t) \rangle_{X^*,X}+\langle\lambda_{q(t),p(t)}, \partial_qC_{q(t)}.K_{q(t)}\pi_{q(t)}p(t)\rangle_{Y^*,Y}.
%\end{split}
%\end{equation}
%with $\lambda_{q,p}=(C_qK_qC_q^*)^{-1}C_qK_qp$.
%Note that $\pi_qp$ is the orthogonal projection of $p$ on $\ker (C_q)$ for the positive semi-definite bilinear form on $X^*$ induced by $K_q$. 
In particular, minimizing controls take the form $K_V\xi_q^*u$, with $u\in L^2(0,1;X^*)$. This proves the contents of Proposition \ref{equiv} (see Section \ref{modshsp}).
\end{remark}

\begin{remark}\label{rem_abnormal}
Let $M$ (resp., $M'$), open subset of a Banach space $X$ (resp., $X'$), be a shape space of order $\ell$ (resp. of order $\ell'\leq\ell$). Assume that there is a dense and continuous inclusion $X\hookrightarrow X'$, such that $M\hookrightarrow M'$ is equivariant (in particular we have $X'^*\hookrightarrow X^*$).
For every $q_0\in M\subset M'$, there are more geodesics emanating from $q_0$ on $M$ than on $M'$ (indeed it suffices to consider initial momenta $p_0\in X^*\setminus X'^*$).
These curves are not solutions of the geodesic equations on $M'$, and are an example of so-called \textit{abnormal extremals} \cite{AgrachevSachkov,TBOOK}. Note that they are however not solutions of Problem \ref{diffprob1} specified on $X'$, since Theorem \ref{PMP} implies that such solutions are projections of geodesics having an initial momentum in $X'^*$.

An example where this situation is encountered is the following. Let $S$ be a compact Riemannian manifold. Consider $X=\mathcal{C}^{\ell+1}(S,\R^d)$ and $X'=\mathcal{C}^{\ell}(S,\R^d)$, with actions defined in Definition \ref{infgaction}. If $p(0)$ is a $\ell+1$-th order distribution, then the geodesic equations on $X$ with initial momentum $p(0)$ yield an abnormal geodesic in $X'$.
\end{remark}

\begin{remark}
Following Remark \ref{rem_abnormal}, if the data attachment function $g:X\rightarrow \R^+$ can be smoothly extended to a larger space $X'$, then the initial momentum of any solution of Problem \ref{diffprob1} is actually in $X'^*\subset X^*$.

For example, we set $g(q)=\int_{S}\vert q(s)-q_{target}(s)\vert^2 \, ds$, for some $q_{target}\in X$, with $X=\mathcal{C}^{\ell}(S,\R^d)$ and $S$ a compact Riemannian manifold. Let $X'=L^2(S,\R^d)$. Then the momentum $p:[0,1]\rightarrow X^*$ associated with a solution of Problem \ref{diffprob1} (whose existence follows from Theorem \ref{geodeq}) takes its values in $X'^*=L^2(S,\R^d)$.
\end{remark}

\paragraph{The case of pure state constraints.}
Let us consider pure state constraints, i.e., constraints of the form $C(q)=0$ with $C:M\rightarrow Y$. Recall that, if $C$ is of class $\mathcal{C}^1$, then they can be transformed into the mixed constraints $dC_q.\dot{q}=dC_q.\xi_qv=0$. In this particular case, the geodesic equations take a slightly different form.

\begin{proposition}\label{geodstco}
Assume that $C:M\rightarrow Y$ is of class $\mathcal{C}^3$, and that $dC_q.\xi_q:V\rightarrow Y$ is surjective for every $q\in M$. 
Consider Problem \ref{diffprob1} with the pure state constraints $C(q)=0$.
If $v(\cdot)\in L^2(0,1;V)$ is a solution of Problem \ref{diffprob1}, associated with the curve $q(\cdot):[0,1]\rightarrow M$, then there exists $\tilde{p}:[0,1]\rightarrow X^*$ such that
$$
\dot{q}(t)=K_{q(t)}\tilde{p}(t),\quad
\dot{\tilde{p}}(t)=-\frac{1}{2}\partial_q\langle\tilde{p}(t), K_{q(t)}\tilde{p}(t)\rangle_{X^*,X} + \partial_q\langle\tilde{\lambda}_{q(t),\tilde{p}(t)}, C(q(t))\rangle_{Y^*,Y},
$$
for almost every $t\in[0,1]$, with
$$
\tilde{\lambda}_{q,\tilde{p}}=(dC_qK_qdC_q^*)^{-1}\left(\frac{1}{2}dC_q. K_q\partial_q\langle\tilde{p}, K_q\tilde{p}\rangle_{X^*,X}- d^2C_q.(K_q\tilde{p},K_q\tilde{p}) - dC_q.\left(\partial_q(K_q\tilde{p}). K_q\tilde{p}\right)\right).
$$
Moreover one has $dC_{q_0}.K_{q_0}\tilde{p}_0=0$ and $dg_{q(1)}+\tilde{p}(1)=dC_{q(1)}^*\nu$ for some $\nu\in Y^*$. 
\end{proposition}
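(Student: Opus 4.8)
The strategy is to reduce Proposition~\ref{geodstco} to Theorem~\ref{geodeq} and Remark~\ref{rem_propequiv}, applied to the kinetic reformulation of the state constraint, and then to absorb the constraint multiplier into the costate through the change of variable $\tilde{p}=p-dC_q^*\lambda$. First I would observe that, since the initial shape is feasible (so $C(q_0)=0$) and $t\mapsto C(q(t))$ is absolutely continuous (as $q(\cdot)\in H^1$ and $C$ is $\mathcal{C}^1$), the pure state constraint $C(q(t))=0$ for all $t$ is equivalent to the kinetic constraint $\frac{d}{dt}C(q(t))=dC_{q(t)}\cdot\xi_{q(t)}v(t)=0$ for almost every $t$. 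Hence the problem coincides with Problem~\ref{diffprob1} for the kinetic constraint operator $C^{\mathrm{kin}}_q:=dC_q\circ\xi_q:V\to Y$, which is surjective for every $q$ by hypothesis. Applying Theorem~\ref{geodeq} and Remark~\ref{rem_propequiv} to $C^{\mathrm{kin}}$ produces $p\in H^1(0,1;X^*)$ and $\lambda\in L^2(0,1;Y^*)$ such that $v=K_V\xi_q^*(p-dC_q^*\lambda)$, $\dot{q}=K_q(p-dC_q^*\lambda)$, $p(1)+dg_{q(1)}=0$, the multiplier formula $\lambda=(dC_qK_qdC_q^*)^{-1}dC_qK_qp$ (here $dC_qK_qdC_q^*=(dC_q\xi_q)K_V(dC_q\xi_q)^*$ is invertible precisely because $dC_q\xi_q$ is onto), and the costate equation $\dot{p}=-\partial_q\langle p,\xi_qv\rangle_{X^*,X}+\partial_q\langle\lambda,dC_q\xi_qv\rangle_{Y^*,Y}$, the $\partial_q$'s being taken with $p,v,\lambda$ frozen.

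Next I would set $\tilde{p}(t)=p(t)-dC_{q(t)}^*\lambda(t)$, so that $\dot{q}=K_q\tilde{p}$ and $v=K_V\xi_q^*\tilde{p}$ are immediate, and the first-order constraint reads $dC_qK_q\tilde{p}=0$. Using the reproducing identity $\langle\mu,K_V\mu\rangle_{V^*,V}=\|K_V\mu\|_V^2$, one checks that at each $(q,\tilde{p})$, with $v=K_V\xi_q^*\tilde{p}$, one has $\langle\tilde{p},(\partial_q\xi_q)v\rangle=\frac{1}{2}\partial_q\langle\tilde{p},K_q\tilde{p}\rangle$ (the right-hand side differentiated with $\tilde{p}$ frozen). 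Expanding the two $\partial_q$-terms of the costate equation and using $p-dC_q^*\lambda=\tilde{p}$ and $\xi_qv=K_q\tilde{p}$, the costate equation rewrites as $\dot{p}=-\frac{1}{2}\partial_q\langle\tilde{p},K_q\tilde{p}\rangle+\langle\lambda,d^2C_q(\,\cdot\,,K_q\tilde{p})\rangle$, and the last functional is exactly the derivative of $q\mapsto dC_q^*\lambda$ (with $\lambda$ frozen) contracted against $K_q\tilde{p}$. One then argues, from the explicit formula for $\lambda$ and the $H^1$-regularity of $q$ and $p$ (this is where $C\in\mathcal{C}^3$ and the $\mathcal{C}^1$-dependence $q\mapsto K_q$ enter), that $\lambda\in H^1(0,1;Y^*)$. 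Differentiating $\tilde{p}=p-dC_q^*\lambda$ in $t$ and using $\dot{q}=K_q\tilde{p}$, the derivative of $dC_q^*\lambda$ along $q$ cancels the matching term of the costate equation, leaving $\dot{\tilde{p}}=-\frac{1}{2}\partial_q\langle\tilde{p},K_q\tilde{p}\rangle-dC_q^*\dot{\lambda}$. Setting $\tilde{\lambda}:=-\dot{\lambda}$ and noting that on $\{C(q)=0\}$ one has $\partial_q\langle\tilde{\lambda},C(q)\rangle_{Y^*,Y}=dC_q^*\tilde{\lambda}$, this is the announced equation for $\tilde{p}$.

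Finally, to identify $\tilde{\lambda}_{q,\tilde{p}}$, I would differentiate the first-order constraint $t\mapsto dC_{q(t)}K_{q(t)}\tilde{p}(t)\equiv 0$ once more in $t$, substitute $\dot{q}=K_q\tilde{p}$ together with the equation for $\dot{\tilde{p}}$ just obtained, and solve the resulting linear identity $dC_qK_qdC_q^*\,\tilde{\lambda}=\frac{1}{2}dC_q\cdot K_q\partial_q\langle\tilde{p},K_q\tilde{p}\rangle-d^2C_q\cdot(K_q\tilde{p},K_q\tilde{p})-dC_q\cdot(\partial_q(K_q\tilde{p})\cdot K_q\tilde{p})$ using the invertibility of $dC_qK_qdC_q^*$; this gives precisely the stated $\tilde{\lambda}_{q,\tilde{p}}$. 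The endpoint conditions are then immediate: $dC_{q_0}K_{q_0}\tilde{p}_0=0$ is the first-order constraint at $t=0$, and $p(1)+dg_{q(1)}=0$ together with $\tilde{p}(1)=p(1)-dC_{q(1)}^*\lambda(1)$ gives $dg_{q(1)}+\tilde{p}(1)=-dC_{q(1)}^*\lambda(1)=dC_{q(1)}^*\nu$ with $\nu:=-\lambda(1)$. I expect the main obstacle to be the regularity claim $\lambda\in H^1$ (without which $\tilde{\lambda}=-\dot{\lambda}$ has no meaning), closely followed by the careful bookkeeping of the $d^2C_q$- and $\partial_q\xi_q$-terms so that the chain-rule derivative of $dC_q^*\lambda$ cancels the matching costate term exactly; the reduction of the state constraint to a kinetic one and the invertibility of $dC_qK_qdC_q^*$ are then routine.
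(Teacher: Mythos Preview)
Your proposal is correct and follows essentially the same route as the paper's proof: reduce the state constraint to its kinetic form, apply Theorem~\ref{geodeq} (via Remark~\ref{rem_propequiv}), set $\tilde{p}=p-dC_q^*\lambda$, and recover $\tilde{\lambda}$ by differentiating $C(q(t))=0$ twice. The paper's one-sentence proof gives $\tilde{\lambda}=\dot{\lambda}$ rather than your $\tilde{\lambda}=-\dot{\lambda}$; your sign is the one consistent with the computation and the stated formula for $\tilde{\lambda}_{q,\tilde{p}}$.
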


Note that all functions involved are of class $\mathcal{C}^1$. Therefore, for a given initial condition $(q_0,\tilde{p}_0)\in T_{q_0}^*M$ with $dC_{q_0}.K_{q_0}p_0=0$, there exists a unique geodesic emanating from $q_0$ with initial momentum $\tilde{p}_0$.

\begin{proof}
The proof just consists in considering $\tilde{\lambda}(t)=\dot{\lambda}(t)$ and $\tilde{p}=p-dC^*_q.\lambda$, where $p$ and $\lambda$ are given by Theorem \ref{geodeq}, and then in differentiating $C(q(t))=0$ twice with respect to time.
\end{proof}

\begin{remark}
The mappings $\tilde{p}$ and $\tilde{\lambda}$ can also be obtained independently of Theorem \ref{geodeq} as Lagrange multipliers for the mapping of constraints $\Gamma(q,v)=(\dot{q}-\xi_{q}v,C(q))$, using the same method as in the proof of Theorem \ref{PMP}, under the assumptions that $C:X\rightarrow Y$ is of class $\mathcal{C}^2$ and $dC_q.\xi_q$ is surjective for every $q\in M$.
\end{remark}

\section{Algorithmic procedures}\label{sec_algos}
In this section we derive some algorithms in order to compute the solutions of the optimal control problem considered throughout. We first consider problems without constraint in Section \ref{unconsmin}, and then with constraints in Section \ref{matchconst}.

\subsection{Problems without constraints}\label{unconsmin}
Shape deformation analysis problems without constraint have already been studied in \cite{BMTY,DGM,GTY1,GTY2,JM,MTY1,MTY2,T1} with slightly different methods, in different and specific contexts. With our previously developed general framework, we are now going to recover methods that are well known in numerical shape analysis, but with a more general point of view allowing us to generalize the existing approaches.

\paragraph{Gradient Descent.}
We adopt the notations, the framework and the assumptions used in Section \ref{PMPs}, but without constraint. For $q_0\in M$ fixed, we consider the optimal control problem of minimizing the functional $J$ defined by \eqref{defJqv} over all $(q(\cdot),v(\cdot))\in H^1_{q_0}(0,1;M)\times L^2(0,1;V)$ such that $\dot{q}(t)=\xi_{q(t)}v(t)$ for almost every $t\in[0,1]$.
The Hamiltonian of the problem then does not involve the variable $\lambda$, and is the function $H:M\times X^*\times V\rightarrow \R$ defined by $H(q,p,v)=\langle p,\xi_qv\rangle_{X^*,X}-L(q,v)$.

We assume throughout that $g\in\mathcal{C}^1(X,\R)$, that $\xi:M\times V\rightarrow X$ is of class $\mathcal{C}^{2}$, is a linear mapping in $v$, and that $L\in\mathcal{C}^{2}(X\times V,\R)$ satisfies the estimate \eqref{estilag}.

As in the proof of Theorem \ref{PMP}, we define the mapping $\Gamma:H_{q_0}^1(0,1;M)\times L^2(0,1;V)\rightarrow L^2(0,1;X)$ by $\Gamma(q,v)(t) = \dot{q}(t)-\xi_{q(t)}v(t)$ for almost every $t\in[0,1]$. 
The objective is to minimize the functional $J$ over the set $E=\Gamma^{-1}(\{0\})$.

According to Lemma \ref{ift}, the mapping $\partial_q\Gamma_{(q,v)}$ is an isomorphism for all $(q,v)\in E$. Therefore, the implicit function theorem implies that $E$ is the graph of the mapping $v\mapsto q_v$ which to a control $v\in L^2(0,1;V)$ associates the curve $q_v\in H^1_{q_0}(0,1;X)$ solution of $\dot{q}_v(t)=\xi_{q_v(t)}v(t)$ for almost every $t\in[0,1]$ and $q_v(0)=q_0$. Moreover this mapping is, like $\Gamma$, of class $\mathcal{C}^2$.

Then, as it was already explained in Remark \ref{rempb1_unknowns} in the case where $L(q,v)=\frac{1}{2}\Vert v\Vert_V^2$, minimizing $J$ over $E$ is then equivalent to minimizing the functional $J_1(v)=J(q_v,v)$ over $L^2(0,1;V)$. 

Thanks to these preliminary remarks, the computation of the gradient of $J_{\vert E}$ then provides in turn a gradient descent algorithm.

\begin{proposition}\label{dcost}
The differential of $J_1$ is given by
$$
d{J_1}_v. \delta v = -\int_0^1 \partial_v H(q_v(t),p(t),v(t)). \delta v(t) \, dt,
$$
for every $\delta v\in L^2(0,1;V)$, where $p\in H^1([0,1],X^*)$ is the solution of $\dot{p}(t)=-\partial_qH(q_v(t),p(t),v(t))$ for almost every $t\in [0,1]$ and $p(1)+dg_{q_v(1)}=0$.
In particular we have 
$$
\nabla J_1(v)(t)=-K_V\partial_v H(q_v(t),p(t),v(t)),
$$
for almost every $t\in[0,1]$.
\end{proposition}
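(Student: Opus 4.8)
The plan is to run the classical adjoint-state computation, adapted to this Banach-space control setting. The first step is to differentiate the map $v\mapsto q_v$. As noted just above the statement, $E=\Gamma^{-1}(\{0\})$ is the graph of the $\mathcal{C}^2$ map $v\mapsto q_v$ (via Lemma \ref{ift} and the implicit function theorem), so for $\delta v\in L^2(0,1;V)$ the variation $\delta q:=d(q_v)_v.\delta v$ is obtained by differentiating $\dot q_v(t)=\xi_{q_v(t)}v(t)$, $q_v(0)=q_0$; since $\xi$ is linear in $v$ this gives the linear Cauchy problem
$$\dot{\delta q}(t)=\partial_q\xi_{q_v(t)}v(t).\delta q(t)+\xi_{q_v(t)}\delta v(t),\qquad \delta q(0)=0,$$
which is well posed because its operator coefficient $t\mapsto\partial_q\xi_{q_v(t)}v(t)$ is square-integrable (here $\xi$ is $\mathcal{C}^1$, $q_v$ is continuous hence bounded, and $\Vert\partial_q\xi_{q_v(t)}v(t)\Vert\leq c\,\Vert v(t)\Vert_V$). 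Using the differentiability of $(q,v)\mapsto\int_0^1 L\,dt$ recorded in the remark following \eqref{defJqv}, together with the chain rule, I would then write
$$dJ_{1,v}.\delta v=\int_0^1\Big(\langle\partial_qL(q_v(t),v(t)),\delta q(t)\rangle_{X^*,X}+\langle\partial_vL(q_v(t),v(t)),\delta v(t)\rangle_{V^*,V}\Big)dt+\langle dg_{q_v(1)},\delta q(1)\rangle_{X^*,X}.$$

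Next I would introduce the adjoint state. The backward equation $\dot p(t)=-\partial_qH(q_v(t),p(t),v(t))=-(\partial_q\xi_{q_v(t)}v(t))^*p(t)+\partial_qL(q_v(t),v(t))$ with terminal condition $p(1)=-dg_{q_v(1)}$ is well posed: its operator coefficient is the transpose of the one above, hence $L^2$ in time, and its source term is $L^2$ by the estimate \eqref{estilag} (which gives $\Vert\partial_qL(q_v(t),v(t))\Vert_{X^*}\leq\Vert\partial_qL(q_v(t),0)\Vert_{X^*}+\gamma_1(0)\Vert v(t)\Vert_V$), so $p\in H^1(0,1;X^*)$. The heart of the argument is then the product rule: for $p\in H^1(0,1;X^*)$ and $\delta q\in H^1(0,1;X)$ the scalar function $t\mapsto\langle p(t),\delta q(t)\rangle_{X^*,X}$ is absolutely continuous with derivative $\langle\dot p(t),\delta q(t)\rangle_{X^*,X}+\langle p(t),\dot{\delta q}(t)\rangle_{X^*,X}$. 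Substituting the two differential equations, the two terms carrying $\partial_q\xi_{q_v(t)}v(t)$ cancel, leaving $\frac{d}{dt}\langle p,\delta q\rangle=\langle\partial_qL(q_v,v),\delta q\rangle+\langle p,\xi_{q_v}\delta v\rangle$; integrating over $[0,1]$ and using $\delta q(0)=0$, $p(1)=-dg_{q_v(1)}$ yields
$$\int_0^1\langle\partial_qL(q_v(t),v(t)),\delta q(t)\rangle_{X^*,X}\,dt=-\langle dg_{q_v(1)},\delta q(1)\rangle_{X^*,X}-\int_0^1\langle p(t),\xi_{q_v(t)}\delta v(t)\rangle_{X^*,X}\,dt.$$

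Plugging this into the expression for $dJ_{1,v}.\delta v$ makes the two $dg_{q_v(1)}$ terms cancel, giving $dJ_{1,v}.\delta v=\int_0^1\big(\langle\partial_vL(q_v(t),v(t)),\delta v(t)\rangle_{V^*,V}-\langle p(t),\xi_{q_v(t)}\delta v(t)\rangle_{X^*,X}\big)dt$, which is exactly $-\int_0^1\partial_vH(q_v(t),p(t),v(t)).\delta v(t)\,dt$ since $\partial_vH(q,p,v).\delta v=\langle p,\xi_q\delta v\rangle_{X^*,X}-\partial_vL(q,v).\delta v$ (using linearity of $\xi$ in $v$). For the gradient formula I would then rewrite, for almost every $t$, $\partial_vH(q_v(t),p(t),v(t)).\delta v(t)=\big(K_V\partial_vH(q_v(t),p(t),v(t)),\delta v(t)\big)_V$ by definition of the Riesz map $K_V$, after checking that $t\mapsto K_V\partial_vH(q_v(t),p(t),v(t))=K_V\big(\xi_{q_v(t)}^*p(t)-\partial_vL(q_v(t),v(t))\big)$ lies in $L^2(0,1;V)$ (boundedness of $\xi_{q_v(t)}^*p(t)$, and the estimate \eqref{estilag} for $\partial_vL$). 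This identifies $\nabla J_1(v)(t)=-K_V\partial_vH(q_v(t),p(t),v(t))$.

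The only point requiring genuine care is the functional-analytic bookkeeping: that the linearized and adjoint Cauchy problems are well posed in $X$ and $X^*$ with merely $L^2$-in-time coefficients, and the justification of the product rule for $\langle p(t),\delta q(t)\rangle_{X^*,X}$ with $p\in H^1(0,1;X^*)$ and $\delta q\in H^1(0,1;X)$. All of this follows from $\xi\in\mathcal{C}^1$, continuity (hence boundedness) of $q_v$ on $[0,1]$, and the Lipschitz-type estimate \eqref{estilag} on $L$; once these are in place the remainder is a routine integration by parts.
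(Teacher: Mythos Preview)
Your proof is correct and is essentially the same adjoint-state computation as the paper's. The only difference is packaging: the paper adds the vanishing quantity $\langle d\Gamma_{(q_v,v)}^*p,(\delta q,\delta v)\rangle$ to $dJ_{(q_v,v)}$ (it vanishes since $(\delta q,\delta v)\in T_{(q_v,v)}E$) and then integrates $\langle p,\dot{\delta q}\rangle$ by parts, whereas you differentiate $t\mapsto\langle p(t),\delta q(t)\rangle_{X^*,X}$ directly --- the underlying integration by parts is identical.
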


\begin{remark}
This result still holds true for Lagrangians $L$ that do not satisfy \eqref{estilag}, replacing $L^2(0,1;V)$ with $L^\infty(0,1;V)$. The gradient is computed with respect to the pre-Hilbert scalar product inherited from $L^2$.
\end{remark}

\begin{proof}
Let $v\in L^2(0,1;V)$ be arbitrary. For every $\delta v\in L^2(0,1;V)$, we have $d{J_1}_v.\delta v = dJ_{(q_v,v)}. (\delta q,\delta v)$, with $\delta q = dq_v.\delta v$. Note that $(\delta q,\delta v)\in  T_{(q_v,v)}E$ (the tangent space of the manifold $E$ at $(q_v,v)$), since $E$ is the graph of the mapping $v\rightarrow q_v$.
Since $E=\Gamma^{-1}(\{0\})$, we have $\langle d\Gamma_{(q_v,v)}^*p, (\delta q,\delta v)\rangle=0$ for every $p\in L^2(0,1;X)^*$. Let us find some particular $p$ such that
$\langle d{J_1}_{(q_v,v)}+d\Gamma_{(q_v,v)}^*p, (\delta q,\delta v)\rangle$ only depends on $\delta v$.

Let $p\in H^1([0,1],X^*)$ be the solution of $\dot{p}(t)=-\partial_qH(q_v(t),p(t),v(t))$ for almost every $t\in [0,1]$ and $p(1)+dg_{q_v(1)}=0$.
Using the computations done in the proof of Theorem \ref{PMP}, we get
\begin{equation*}
\begin{split}
\langle d{J_1}_{(q_v,v)}+d\Gamma_{(q_v,v)}^*p , (\delta q,\delta v)\rangle
 = 
\int_0^1\Big( 
& \langle p(t),\dot{\delta q}(t)\rangle_{X^*,X}-\langle\partial_qH(q_v(t),p(t),v(t)),\delta q(t)\rangle_{X^*,X} \\
& - \langle\partial_vH(q_v(t),p(t),v(t)), \delta v(t)\rangle_{V^*,V} \Big) \, dt + dg_{q_v(1)}. \delta q_v(1).
\end{split}
\end{equation*}
Integrating by parts and using the relations $\delta q(0)=0$ and $p(1)+dg_{q_v(1)}=0$, we obtain
$$
\langle d{J_1}_{(q_v,v)}+d\Gamma_{(q_v,v)}^*p , (\delta q,\delta v)\rangle
=
-\int_0^1 \partial_v H(q_v(t),p(t),v(t)). \delta v(t) \, dt
$$
Since $\langle d\Gamma_{(q_v,v)}^*p, (\delta q,\delta v)\rangle=0$, the proposition follows.
\end{proof}

In the case of shape spaces, which form our main interest here, we have $L(q,v)=\frac{1}{2}\Vert v\Vert_V^2$, and then $\partial_vH(q,p,v)=\xi_q^*p-(v,\cdot)_V$ and $K_V\partial_vH(q,p,v)=K_V\xi_q^*p-v$.
It follows that
$$\nabla J_1(v)=v-K_V\xi_{q_v}^*p.$$
In particular, if $v=K_V\xi_q^*u$ for some $u\in L^2([0,1],X^*)$, then
$$
v-h\nabla J_1(v) = K_v\xi_q^*(u-h(u-p)),
$$
for every $h\in \R$. Therefore, applying a gradient descent algorithm does not change this form.
It is then important to notice that this provides as well a gradient descent algorithm for solving Problem \ref{diffprob2} (the kernel formulation of Problem \ref{diffprob1}) without constraint, and this in spite of the fact that $L^2([0,1],X^*)$ is not necessarily a Hilbert space. 
In this case, if $p\in H^1([0,1],X^*)$ is the solution of $\dot{p}(t)=-\partial_qH(q_v(t),p(t),v(t))$ for almost every $t\in [0,1]$ and $p(1)+dg_{q_v(1)}=0$, then $u-p$ is the gradient of the functional $J_2$ defined by \eqref{defJ2} with respect to the symmetric nonnegative bilinear form $B_q(u_1,u_2)=\int_0^1 \langle u_1(t), K_{q(t)}u_2(t)\rangle_{X^*,X}\,dt$.

This gives a first algorithm to compute unconstrained minimizers in a shape space. We next provide a second method using the space of geodesics.

\paragraph{Gradient descent on geodesics: minimization through shooting.}
Since the tools are quite technical, in order to simplify the exposition we assume that the shape space $M$ is finite dimensional, i.e., that $X=\R^n$ for some $n\in\N$. The dual bracket $\langle p,w\rangle_{X^*,X}$ is then identified with the canonical Euclidean product $p^Tw$, and $K_q=\xi_qK_V\xi_q^*:X^*\rightarrow X$ is identified with a $n\times n$ positive semi-definite symmetric matrix.
Theorem \ref{geodeq} and Corollary \ref{geodeq3} (see Section \ref{geodeqshsp}) imply that the minimizers of the functional $J_1$ defined by \eqref{defJ1} coincide with those of the functional
\begin{equation}
\hat{J}_1(p_0)=\frac{1}{2}p_0^TK_{q_0}p_0+g(q(1)),
\label{reducedcost}
\end{equation}
where $v=K_V\xi_q^*p$ and $(q(\cdot),p(\cdot))$ is the geodesic solution of the Hamiltonian system $\dot{q}(t)=\partial_p h(q(t),p(t))$, $\dot{p}(t)=-\partial_q h(q(t),p(t))$, for almost every $t\in[0,1]$, with $(q(0),p(0))=(q_0,p_0)$. Here, $h$ is the reduced Hamiltonian (see Remark \ref{rem11}) and is given by
$h(q,p) = \frac{1}{2}\langle p, \xi_qK_V\xi_q^*p\rangle_{X^*,X}= \frac{1}{2}\langle p, K_qp\rangle_{X^*,X}$.
Therefore, computing a gradient of $\hat{J}_1$ for some appropriate bilinear symmetric nonnegative product provides in turn another algorithm for minimizing the functional $J_1$. For example, if the inner product that we consider is the canonical one, then $\nabla \hat{J}_1(p_0)=K_{q_0}p_0+\nabla (g\circ q(1))(p_0)$.
The term $\nabla(g\circ q(1))(p_0)$ is computed thanks to the following well-known result.

\begin{lemma}\label{gradedo}
Let $\ n\in\N$, let $U$ be an open subset of $\R^n$, let $f:U\rightarrow\R^n$ be a complete smooth vector field on $U$, let $G$ be the function of class $\mathcal{C}^1$ defined on $U$ by
$G(q_0)=g(q(1))$, where $g$ is a function on $U$ of class $\mathcal{C}^1$ and $q:[0,1]\rightarrow\R^n$ is the solution of $\dot{q}(t)=f(q(t))$ for almost every $t\in[0,1]$ and $q(0)=q_0$. Then $\nabla G(q_0)=Z(1)$ where $Z:[0,1]\rightarrow\R^n$ is the solution of $\dot{Z}(t)=df_{q(1-t)}^TZ(t)$ for almost every $t\in[0,1]$ and $Z(0)=\nabla g(q(1))$.
\end{lemma}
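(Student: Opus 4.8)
The plan is to compute the differential of $G$ directly as a function of the initial condition $q_0$, expressing it through the linearized (variational) flow, and then to identify the gradient via a time-reversed adjoint equation. First I would introduce the flow map $\phi_t:U\to\R^n$ of the complete vector field $f$, so that $q(t)=\phi_t(q_0)$ and $G(q_0)=g(\phi_1(q_0))$. By the chain rule, $dG_{q_0}=dg_{q(1)}\circ d(\phi_1)_{q_0}$, so for any $\delta q_0\in\R^n$ we have $dG_{q_0}.\delta q_0 = \nabla g(q(1))^T\, J(1)\,\delta q_0$, where $J(t)=d(\phi_t)_{q_0}$ is the Jacobian of the flow. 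Classical ODE theory (differentiability with respect to initial conditions) gives that $J(\cdot)$ solves the linear variational equation $\dot J(t)=df_{q(t)}\,J(t)$ with $J(0)=\mathrm{Id}$.

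Next I would set up the adjoint. Define $W(t)\in\R^n$ by requiring that the quantity $W(t)^T J(t)$ be constant in $t$ and equal to its value at $t=1$, namely $\nabla g(q(1))$. Differentiating $W(t)^TJ(t)$ and using $\dot J = df_{q(t)}J$ forces $\dot W(t)^T J(t) + W(t)^T df_{q(t)} J(t)=0$; since $J(t)$ is invertible for all $t$ (being the Jacobian of a diffeomorphism), this is equivalent to $\dot W(t) = -df_{q(t)}^T W(t)$, the backward adjoint equation with terminal condition $W(1)=\nabla g(q(1))$. Then $dG_{q_0}.\delta q_0 = W(t)^TJ(t)\delta q_0\big|_{t=0} = W(0)^T\delta q_0$, so $\nabla G(q_0)=W(0)$.

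Finally I would perform the time-reversal to match the statement: set $Z(t)=W(1-t)$. Then $Z(0)=W(1)=\nabla g(q(1))$, and $\dot Z(t)=-\dot W(1-t)=df_{q(1-t)}^T W(1-t)=df_{q(1-t)}^T Z(t)$, which is exactly the equation in the lemma; and $\nabla G(q_0)=W(0)=Z(1)$. The regularity assumptions ($f$ complete and smooth, $g$ of class $\mathcal C^1$) ensure that $q(\cdot)$, $J(\cdot)$, $W(\cdot)$, $Z(\cdot)$ all exist on $[0,1]$ and that $G$ is indeed $\mathcal C^1$, so the computation is justified.

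There is really no serious obstacle here: the only point that needs a word of care is the invertibility of the variational matrix $J(t)$, which is what lets us pass from the constancy of $W(t)^TJ(t)$ to the pointwise adjoint ODE; this follows because $\phi_t$ is a diffeomorphism (the vector field being complete), or alternatively directly from Liouville's formula, since $\det J(t)=\exp\big(\int_0^t \mathrm{tr}\, df_{q(s)}\,ds\big)\neq 0$. Everything else is an application of the chain rule, differentiability in initial conditions, and the substitution $t\mapsto 1-t$.
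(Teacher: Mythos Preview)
Your argument is correct and is the standard one: differentiate the flow in the initial condition to get the variational equation, pass to the adjoint to transport $\nabla g(q(1))$ backward, and time-reverse to obtain the forward ODE for $Z$. The paper itself does not give a proof of this lemma; it simply invokes it as a well-known result and proceeds to apply it, so there is nothing to compare against and your write-up would serve perfectly well as the missing justification.
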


In our case, we have $U=M\times\R^n$ and $f(q,p) = (\nabla_ph, -\nabla_qh) = (K_qp, -\frac{1}{2}\nabla_q(p^TK_qp))$.
Note that we used the Euclidean gradient instead of the derivatives. This is still true thanks to the identification made between linear forms and vectors at the beginning of the section.
We get $ \nabla\hat{J}_1(p_0)=K_{q_0}p_0+\alpha(1)$, where $Z(\cdot)= (z(\cdot), \alpha(\cdot))$ is the solution of $\dot{Z}(t)=df_{q(1-t),p(1-t)}^TZ(t)$ for almost every $t\in[0,1]$ and $Z(0) = (z(0),\alpha(0)) = (\nabla g(q(1)),0)$.

In numerical implementations, terms of the form $df^Tw$, with $f$ a vector field and $w$ a vector, require a long computational time since every partial derivative of $f$ has to be computed. In our context however, using the fact that the vector field $f(q,p)$ is Hamiltonian, the computations can be simplified in a substantial way. Indeed, using the commutation of partial derivatives, we get
$$
df_{(q,p)}^TZ=\begin{pmatrix}
 \nabla_q( f(q,p),Z) \\
\nabla_p( f(q,p),Z) 
\end{pmatrix}
=
\begin{pmatrix}
\nabla_q(\nabla_ph^Tz-\nabla_qh^T\alpha)\\
\nabla_p(\nabla_ph^Tz-\nabla_qh^T\alpha)
\end{pmatrix}
=
\begin{pmatrix}
\partial_p(\nabla_qh). z-\partial_q(\nabla_qh). \alpha)\\
\partial_p(\nabla_ph). z-\partial_q(\nabla_ph). \alpha)
\end{pmatrix} .
$$
Replacing $h$ with its expression, we get
$$
df_{(q,p)}^TZ
=
\begin{pmatrix}
\nabla_{q}(p^TK_qz)-\frac{1}{2}\partial_q(\nabla_q(p^TK_qp)).\alpha\\
K_qz-\partial_q(K_qp).\alpha
\end{pmatrix}.
$$
Therefore, instead of computing $d(\nabla_q(p^TK_qp))^T\alpha$, which requires the computation of all partial derivatives of $\nabla_q(p^TK_q)$, it is required to compute only one of them, namely the one with respect to $\alpha$. Let us sum up the result in the following proposition.

\begin{proposition}\label{prop8}
We have $ \nabla\hat{J}_1(p_0)=K_{q_0}p_0+\alpha(1)$, where $(z(\cdot),\alpha(\cdot))$ is the solution of
\begin{equation*}
\begin{split}
\dot{z}(t)&=\nabla_{q}(p(1-t)^TK_{q(1-t)}z(t))-\frac{1}{2}\partial_q(\nabla_q(p(1-t)^TK_{q(1-t)}p(1-t))).\alpha(t), \\
\dot{\alpha}(t)&=K_{q(1-t)}z(t)-\partial_q(K_{q(1-t)}p(1-t)).\alpha(t)
\end{split}
\end{equation*}
with $(z(0),\alpha(0))=(\nabla g(q(1)),0)$, and $(q(t),p(t))$ satisfies the geodesic equations $\dot{q}(t)=K_{q(t)}p(t)$ and $\dot{p}(t) = -\frac{1}{2}\nabla_q(p(t)^TK_{q(t)}p(t))$ for almost every $t\in[0,1]$, with $q(0)=q_0$ and $p(0)=p_0$.
\end{proposition}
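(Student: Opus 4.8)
The plan is to derive Proposition~\ref{prop8} by assembling two ingredients already at hand: the reduction of $\hat J_1$ to a quadratic term plus the composition $g\circ q(1)$ along the Hamiltonian flow, and Lemma~\ref{gradedo} applied to that flow, into which the simplified expression of $df_{(q,p)}^TZ$ computed just above is then fed.

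First, with $q_0$ fixed, I view $\hat J_1$ as a function of $p_0\in\R^n$ alone and split it as $\hat J_1(p_0)=\tfrac12 p_0^TK_{q_0}p_0+G(q_0,p_0)$, where $G(q_0,p_0)=g(q(1))$ and $(q(\cdot),p(\cdot))$ is the geodesic with $(q(0),p(0))=(q_0,p_0)$. The matrix $K_{q_0}$ is symmetric and independent of $p_0$, so the gradient in $p_0$ of the first term is $K_{q_0}p_0$; it remains to compute $\nabla_{p_0}G$.

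Next I apply Lemma~\ref{gradedo} on the open set $U=M\times\R^n$ with the Hamiltonian vector field $f(q,p)=(\nabla_p h,-\nabla_q h)=(K_qp,-\tfrac12\nabla_q(p^TK_qp))$, whose time-$1$ flow issued from $(q_0,p_0)$ is $(q(\cdot),p(\cdot))$, and with the $\mathcal{C}^1$ function $(q,p)\mapsto g(q)$ on $U$, whose Euclidean gradient is $(\nabla g(q),0)$ because $g$ does not depend on $p$. Lemma~\ref{gradedo} gives $\nabla_{(q_0,p_0)}G=Z(1)$ with $Z(\cdot)=(z(\cdot),\alpha(\cdot))$ solving $\dot Z(t)=df_{(q(1-t),p(1-t))}^TZ(t)$ and $Z(0)=(\nabla g(q(1)),0)$; extracting the $p_0$-block yields $\nabla_{p_0}G(q_0,p_0)=\alpha(1)$, hence $\nabla\hat J_1(p_0)=K_{q_0}p_0+\alpha(1)$. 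Substituting into $\dot Z=df^TZ$ the expression of $df_{(q,p)}^TZ$ obtained just before the statement---valid by the symmetry of the Hessian of $h$, i.e. the commutation of the partial derivatives of the $\mathcal{C}^2$ function $h(q,p)=\tfrac12 p^TK_qp$---and separating components gives exactly the announced system for $(z,\alpha)$, while the equations for $(q,p)$ are the finite-dimensional geodesic equations of Theorem~\ref{geodeq}.

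The only point requiring care is regularity and completeness: Lemma~\ref{gradedo} is phrased for a smooth complete vector field, whereas here $f$ is only as regular as $q\mapsto K_q$ and the geodesic is a priori defined merely on $[0,1]$. However, the statement and proof of Lemma~\ref{gradedo} remain valid verbatim for $f\in\mathcal{C}^1$ whose flow exists on $[0,1]$, which holds under the standing assumptions of this section (cf. the hypotheses of Corollary~\ref{geodeq3}, which guarantee $h\in\mathcal{C}^2$ and hence $f\in\mathcal{C}^1$). This mild technical adjustment aside, the proof is pure bookkeeping, and I expect no genuine obstacle.
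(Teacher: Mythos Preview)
Your proposal is correct and follows essentially the same approach as the paper: the proposition is stated there as a summary of the computation immediately preceding it, which splits $\hat J_1$ into the quadratic part and $g(q(1))$, applies Lemma~\ref{gradedo} to the Hamiltonian vector field $f=(\nabla_p h,-\nabla_q h)$ on $M\times\R^n$, and then substitutes the simplified expression for $df_{(q,p)}^TZ$ obtained via the symmetry of second derivatives of $h$. Your additional remark on the regularity hypothesis of Lemma~\ref{gradedo} is a fair technical observation that the paper leaves implicit.
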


A gradient descent algorithm can then be used in order to minimize $\hat{J}_1$ and thus $J_1$.

\subsection{Problems with constraints}\label{matchconst}
In this section, we derive several different methods devoted to solve numerically constrained optimal control problems on shape spaces. To avoid using overly technical notation in their whole generality, we restrict ourselves to the finite-dimensional case. The methods can however be easily adapted to infinite-dimensional shape spaces. We use the notation, the framework and the assumptions of Section \ref{modshsp}.

Let $X=\R^n$ and let $M$ be an open subset of $X$. For every $q\in M$, we identify $K_q$ with a $n\times n$ symmetric positive semi-definite real matrix. Throughout the section, we focus on kinetic constraints and we assume that we are in the conditions of Proposition \ref{equiv}, so that these constraints take the form $C_{q}K_{q}u=0$. Note that, according to Proposition \ref{proposition1}, in this case Problems \ref{diffprob1} and \ref{diffprob2} are equivalent.
Hence in this section we focus on Problem \ref{diffprob2}, and thanks to the identifications above the functional $J_2$ defined by \eqref{defJ2} can be written as
$$
J_2(u)=\frac{1}{2}\int_0^1 u(t)^TK_{q(t)}u(t) \, dt+g(q(1)).
$$
Note (and recall) that pure state constraints, of the form $C(q)=0$, are treated as well since, as already mentioned, they are equivalent to the kinetic constraints $dC_{q}. K_{q}u=0$.

\paragraph{The augmented Lagrangian method.}
This method consists of minimizing iteratively unconstrained functionals in which the constraints have been penalized. Although pure state constraints are equivalent to kinetic constraints, in this approach they can also be treated directly. The method goes as follows.
In the optimal control problem under consideration, we denote by $\lambda:[0,1]\rightarrow \R^{k}$ the Lagrange multiplier associated with the kinetic constraints $C_qK_qu=0$ (its existence is ensured by Theorem \ref{PMP}).
We define the augmented cost function
$$J_A(u,\lambda_1,\lambda_2,\mu)=\int_0^1 L_A(q(t),u(t),\lambda(t),\mu)\, dt+g(q(1)), $$
where $L_A$, called augmented Lagrangian, is defined by
$$
L_A(q,u,\lambda,\mu)=L(q,u)-\lambda^TC_qK_qu+\frac{1}{2\mu}\vert C_qK_qu\vert^2,
$$
with, here, $L(q,u)=\frac{1}{2}u^TK_qu$.
Let $q_0\in M$ fixed. Choose an initial control $u_0$ (for example, $u_0=0$), an initial function $\lambda_{0}$ (for example, $\lambda_{0}=0$), and an initial constant $\mu_0>0$. At step $\ell$, assume that we have obtained a control $u_\ell$ generating the curve $q_\ell$, a function $\lambda_{\ell}:[0,1]\rightarrow \R^{k}$, and a constant $\mu_\ell>0$. The iteration $\ell\rightarrow\ell+1$ is defined as follows.
First, minimizing the unconstrained functional $u\mapsto J_A(u,\lambda_{\ell},\mu_\ell)$ over $L^2(0,1;\R^n)$ yields a new control $u_{\ell+1}$, generating the curve $q_{\ell+1}$ (see further in this section for an appropriate minimization method). Second, $\lambda$ is updated according to
$$
\lambda_{\ell+1}=\lambda_{\ell}-\frac{1}{\mu_\ell}C_{q_{\ell+1}}K_{q_{\ell+1}}u_{\ell+1}.
$$
Finally, we choose $\mu_{\ell+1}\in (0,\mu_\ell]$ (many variants are possible in order to update this penalization parameter, as is well-known in numerical optimization).

Under some appropriate assumptions, as long as $\mu_\ell$ is smaller than some constant $\beta>0$, $u_\ell$ converges to a control $u^*$ which is a constrained extremum of $J_2$. Note that it is not required to assume that $\mu_\ell$ converge to $0$. More precisely we infer from \cite[Chapter 3]{IKBOOK} the following convergence result.

\begin{proposition}[Convergence of the augmented Lagrangian method]
Assume that all involved mappings are least of class $\mathcal{C}^2$ and that $C_qK_q$ is surjective for every $q\in M$.
Let $u^*$ be an optimal solution of Problem \ref{diffprob2} and let $q^*$ be its associated curve. Let $\lambda^*$ be the Lagrange multiplier (given by Theorem \ref{PMP}) associated with the constraints. We assume that there exist $c>0$ and $\mu>0$ such that 
\begin{equation}\label{auglag}
(\partial^2_{u}J_A)_{(u^*,\lambda^*,\mu)}.(\delta u,\delta u)\geq c\Vert \delta u\Vert_{L^2(0,1;\R^n)}^2,
\end{equation}
for every $\delta u\in L^2(0,1;\R^n)$.
Then there exists a neighborhood of $u^*$ in $L^2(0,1;\R^n)$ such that, for every initial control $u_0$ in this neighborhood, the sequence $(u_\ell)_{\ell\in\N}$ built according to the above algorithm converges to $u^*$, and the sequence $(\lambda_\ell)_{\ell\in\N}$ converges to $\lambda^*$, as $\ell$ tends to $+\infty$.
\end{proposition}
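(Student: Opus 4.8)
The plan is to reduce the statement to the abstract local convergence theorem for the method of multipliers (Hestenes--Powell augmented Lagrangian) in Hilbert space, as developed in \cite[Chapter 3]{IKBOOK}, and then to check that the hypotheses of that theorem are satisfied in our setting. First I would eliminate the state: as in Section \ref{unconsmin}, Lemma \ref{ift} together with the implicit function theorem shows that the constraint set $\{(q,v):\dot q=\xi_q v,\ q(0)=q_0\}$ is the graph of a $\mathcal{C}^2$ map $u\mapsto q_u$. Composing with this map, the reduced cost $u\mapsto J_2(u)=\tfrac12\int_0^1 u(t)^T K_{q_u(t)}u(t)\,dt+g(q_u(1))$ becomes a $\mathcal{C}^2$ functional on $L^2(0,1;\R^n)$, and the reduced constraint $u\mapsto e(u):=C_{q_u(\cdot)}K_{q_u(\cdot)}u(\cdot)$ a $\mathcal{C}^2$ map into $L^2(0,1;\R^k)$, under the assumption that all data are at least of class $\mathcal{C}^2$. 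Then Problem \ref{diffprob2} becomes the abstract equality-constrained problem ``minimize $J_2(u)$ subject to $e(u)=0$'', and $u\mapsto J_A(u,\lambda,\mu)$ is exactly the associated augmented Lagrangian, with multiplier $\lambda\in L^2(0,1;\R^k)$.

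Next I would verify the three structural assumptions required by the abstract theorem at the candidate point $u^*$. Smoothness has just been obtained. The regular point (constraint qualification) condition is that $de_{u^*}:L^2(0,1;\R^n)\to L^2(0,1;\R^k)$ be onto; writing $de_{u^*}.\delta u(t)=C_{q^*(t)}K_{q^*(t)}\delta u(t)+\bigl(\partial_q(C_{q^*(t)}K_{q^*(t)}u^*(t))\bigr).\delta q(t)$ with $\delta q=dq_{u^*}.\delta u$ solving the linearized state equation, this surjectivity follows from the pointwise surjectivity of $C_{q^*(t)}K_{q^*(t)}$ together with a measurable-selection and linear-ODE argument identical to the one used in the proof of Lemma \ref{ift}, the $\delta q$-term being a Volterra perturbation; note also that here $Y=\R^k$ is finite dimensional, hence reflexive, so $L^2(0,1;\R^k)^*=L^2(0,1;\R^k)$ and the Lagrange multiplier produced by Theorem \ref{PMP} coincides with the abstract one. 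Finally, the second-order sufficient condition is precisely the assumed coercivity \eqref{auglag} of $(\partial^2_u J_A)_{(u^*,\lambda^*,\mu)}$ on all of $L^2(0,1;\R^n)$ for some $\mu>0$ (this is the augmented-Lagrangian form of the usual requirement that the Hessian of the Lagrangian be positive definite on $\ker de_{u^*}$ with a large enough penalty parameter).

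With these three points in hand, the abstract theorem of \cite[Chapter 3]{IKBOOK} applies: for every $\mu_\ell$ in a fixed interval $(0,\beta]$ the inner subproblem $\min_u J_A(u,\lambda_\ell,\mu_\ell)$ is well posed in a neighborhood of $u^*$ and admits there a unique local minimizer $u_{\ell+1}$ depending smoothly on $\lambda_\ell$; the update $\lambda_{\ell+1}=\lambda_\ell-\tfrac1{\mu_\ell}C_{q_{\ell+1}}K_{q_{\ell+1}}u_{\ell+1}$ acts as a contraction towards $\lambda^*$ near $(u^*,\lambda^*)$; and consequently there is a neighborhood $\mathcal{U}$ of $u^*$ in $L^2(0,1;\R^n)$ such that, started from any $u_0\in\mathcal{U}$, the sequences $(u_\ell)_{\ell\in\N}$ and $(\lambda_\ell)_{\ell\in\N}$ converge to $u^*$ and $\lambda^*$ respectively. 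I expect the only genuinely delicate point to be the careful matching of the function-space framework, namely showing that $e$ is $\mathcal{C}^2$ with a derivative that is not merely surjective but uniformly right-invertible near $u^*$, so that the regular-point hypothesis of \cite{IKBOOK} is truly met rather than only formally invoked; once this is settled, the remainder is a direct transcription of the classical method-of-multipliers convergence argument.
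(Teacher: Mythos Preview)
Your proposal is correct and follows the same approach as the paper: the paper does not give a proof at all but simply states that the result is inferred from \cite[Chapter 3]{IKBOOK}, and your plan is precisely to verify that the hypotheses of that abstract convergence theorem are met in the present setting. In fact you supply more detail than the paper does (the reduction to a $\mathcal{C}^2$ functional on $L^2(0,1;\R^n)$ via $u\mapsto q_u$, the surjectivity of $de_{u^*}$, and the identification of \eqref{auglag} with the second-order sufficient condition), so your write-up would make the paper's one-line citation rigorous.
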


\begin{remark}
Assumption \eqref{auglag} may be hard to check for shape spaces. As is well-known in optimal control theory, this coercivity assumption of the bilinear form $(\partial^2_{u}J_A)_{(u^*,\lambda^*,\mu)}$ is actually equivalent to the nonexistence of \textit{conjugate points} of the optimal curve $q^*$ on $[0,1]$ (see \cite{BCT_COCV2007,BFT} for this theory and algorithms of computation). In practice, computing conjugate points is a priori easy since it just consists of testing the vanishing of some determinants; however in our context the dimension $n$ is expected to be large and then the computation  may become difficult numerically. 
\end{remark}

\begin{remark}
Pure state constraints $C(q)=0$ can either be treated in the above context by replacing $C_qK_q$ with $dC_q. K_q$, or can as well be treated directly by replacing $C_qK_q$ with $C(q)$ in the algorithm above.
\end{remark}

Any of the methods described in Section \ref{unconsmin} can be used in order to minimize the functional $J_A$ with respect to $u$. For completeness let us compute the gradient in $u$ of $J_A$ at the point $(u,\lambda,\mu)$. 

\begin{lemma}\label{lemma_computation_nabla_JA}
There holds
$$
\nabla_u J_A(u,\lambda,\mu) = u+C_q^T\lambda+\frac{1}{\mu}C_q^TC_qK_qu-p,
$$
where $p(\cdot)$ is the solution of
\begin{equation*}
\begin{split}
\dot{p}(t) &= -\partial_q(p(t)^TK_{q(t)}u(t))+\partial_qL_A(q(t),u(t),\lambda(t),\mu) \\
&= \partial_q\left(\left(\frac{u(t)}{2}-p(t)\right)^TK_{q(t)}u(t)\right)+\lambda(t)^T\partial_qC_{q(t)}u(t)+\frac{1}{2\mu}\partial_q\left(u(t)^TK_{q(t)}C_{q(t)}^TC_{q(t)}K_{q(t)}u(t)\right)
\end{split}
\end{equation*}
for almost every $t\in[0,1]$ and $p(1)+dg_{q(1)}=0$.
\end{lemma}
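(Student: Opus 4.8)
The plan is to observe that, with the parameters $\lambda(\cdot)$ and $\mu$ frozen, the map $u\mapsto J_A(u,\lambda,\mu)=\int_0^1 L_A(q(t),u(t),\lambda(t),\mu)\,dt+g(q(1))$, subject to $\dot q(t)=K_{q(t)}u(t)$ and $q(0)=q_0$, is exactly of the form studied in Section \ref{unconsmin}: the running cost $L_A(\cdot,\cdot,\lambda,\mu)$ is a polynomial of degree two in $u$ whose coefficients depend on $q$ through the $\mathcal{C}^2$ maps $q\mapsto K_q$ and $q\mapsto C_q$, and $q\mapsto K_q$ plays the role of $\xi$. Hence Proposition \ref{dcost}, together with the remark following it which covers such quadratic-in-$u$ Lagrangians, applies with $L$ replaced by $L_A(\cdot,\cdot,\lambda,\mu)$, and the gradient computation reduces to writing down the associated adjoint equation and to differentiating $L_A$ in $u$.

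First I would set $H_A(q,p,u)=\langle p,K_qu\rangle_{X^*,X}-L_A(q,u,\lambda,\mu)$ and write the costate equation $\dot p(t)=-\partial_qH_A(q(t),p(t),u(t))$ with $p(1)+dg_{q(1)}=0$, exactly as in Proposition \ref{dcost}. Since $\partial_qH_A=-\partial_q(p^TK_qu)+\partial_qL_A$, expanding $\partial_qL_A$ by differentiating its three summands $\tfrac12u^TK_qu$, $-\lambda^TC_qK_qu$ and $\tfrac1{2\mu}|C_qK_qu|^2$ with respect to $q$ (holding $u,p,\lambda,\mu$ fixed) reproduces the $p$-equation displayed in the statement, after grouping $-\partial_q(p^TK_qu)+\tfrac12\partial_q(u^TK_qu)=\partial_q((\tfrac u2-p)^TK_qu)$.

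Next, Proposition \ref{dcost} gives $d(J_A)_u.\delta u=-\int_0^1\partial_uH_A(q(t),p(t),u(t)).\delta u(t)\,dt$, so it only remains to compute $\partial_uH_A=K_q^Tp-\partial_uL_A$. Differentiating each term of $L_A$ in $u$ gives $\partial_uL_A=K_qu-K_qC_q^T\lambda+\tfrac1\mu K_qC_q^TC_qK_qu$, hence, using the symmetry $K_q^T=K_q$, one has $\partial_uH_A=K_q\big(p-u+C_q^T\lambda-\tfrac1\mu C_q^TC_qK_qu\big)$. Factoring $K_q$ out and expressing the gradient with respect to the symmetric nonnegative bilinear form $B_q(u_1,u_2)=\int_0^1u_1(t)^TK_{q(t)}u_2(t)\,dt$ — the natural pre-Hilbert structure on $L^2(0,1;X^*)$ inherited from the kernel formulation \eqref{defJ2}, already used for the unconstrained problem in Section \ref{unconsmin} — one reads off $\nabla_uJ_A=u+C_q^T\lambda+\tfrac1\mu C_q^TC_qK_qu-p$, which is the claimed identity.

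The argument is essentially routine and there is no serious obstacle, only bookkeeping. The points deserving care are: keeping $\lambda$ and $\mu$ fixed throughout, since the gradient is taken in $u$ alone; checking that $L_A$ satisfies the hypotheses under which Proposition \ref{dcost} is valid, which is immediate because $L_A$ is polynomial in $u$ with $\mathcal{C}^2$ coefficients; and — as in the unconstrained case — taking the gradient in the bilinear form $B_q$ rather than the plain $L^2$ inner product, which is precisely what makes the resulting descent direction compatible with controls of the form $K_V\xi_q^*u$ and keeps the iterates of the augmented Lagrangian algorithm in the kernel formulation.
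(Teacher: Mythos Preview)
Your approach is correct and essentially identical to the paper's: apply Proposition~\ref{dcost} with $L$ replaced by the (time-dependent, through $\lambda(t)$) augmented Lagrangian $L_A$, write the associated adjoint equation for $p$, compute $\partial_u H_A$, and identify the gradient with respect to the $K_q$-weighted bilinear form $B_q$. The only point the paper makes slightly more explicit is that Proposition~\ref{dcost} must be invoked in its time-dependent form because $\lambda=\lambda(t)$, but you implicitly acknowledge this by writing $\lambda(\cdot)$.
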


\begin{remark}
For pure state constraints $C(q)=0$, there simply holds $\nabla_u J_A=u-p$ and the differential equation in $p$ is
$$
\dot{p}(t)=\partial_q\left(\left(\frac{u(t)}{2}-p(t)\right)^TK_{q(t)}u(t)\right)+\lambda(t)^TdC_{q(t)}+\frac{1}{\mu}C(q(t))^TdC_{q(t)}.
$$
\end{remark}

\begin{proof}[Proof of Lemma \ref{lemma_computation_nabla_JA}]
We use Proposition \ref{dcost} with $L(q,u,t)=L_A(q,u,\lambda(t),\mu)$, with $\lambda$ and $\mu$ fixed (it is indeed easy to check that this proposition still holds true when the Lagrangian also depends smoothly on $t$). The differential of $J_A$ with respect to $u$ is then given by
\begin{equation}\label{dcost1}
d (J_A)_u . \delta u=\int_0^1\left(\partial_uL_A(q(t),u(t),\lambda(t),\mu)-p(t)^TK_{q(t)}\right)\delta u(t) \, dt,
\end{equation}
where $p(\cdot)$ is the solution of $\dot{p}(t)=-\partial_q(p(t)^TK_{q(t)}u(t))+\partial_qL_A(q(t),u(t),\lambda(t),\mu)$ for almost every $t\in[0,1]$ and $p(1)+dg_{q(1)}=0$. To get the result, it then suffices to identify the differential $d (J_A)_u$ with the gradient $\nabla J_A(u)$ with respect to the inner product on $L^2(0,1;R^n)$ given by $(u_1,u_2)_{L^2(0,1;R^n)} = \int_0^1 u_1(t)^TK_{q(t)}u_2(t)\, dt$.
\end{proof}

The advantage of the augmented Lagrangian method is that, at every step, each gradient is ``easy'' to compute (at least as easy as in the unconstrained case). The problem is that, as in any penalization method, a lot of iterations are in general required in order to get a good approximation of the optimal solution, satisfying approximately the constraints with enough accuracy.

The next method we propose tackles the constraints without penalization.

\paragraph{Constrained minimization through shooting.}
We adapt the usual shooting method used in optimal control (see, e.g., \cite{TBOOK}) to our context. For $(q,p)\in M\times X^*=M\times\R^n$, we define $\lambda_{q,p}$ as in Theorem \ref{geodeq} by
$$
\lambda_{q,p}=(C_qK_qC_q)^{-1}C_qK_qp.
$$ 
We also denote $\pi_qp=p-C_q^T\lambda_{q,p}$. In particular, $v_{q,p}=K_q\pi_qp$. 

\begin{remark}
A quick computation shows that $\pi_qp$ is the orthogonal projection of $p$ onto $\ker (C_q)$ for the inner product induced by $K_q$. 
\end{remark}

According to Theorem \ref{geodeq}, Corollary \ref{geodeq3} and Remark \ref{rem_propequiv} (see Section \ref{geodeqshsp}), the minimizers of $J_2$ have to be sought among the geodesics $(q(\cdot),p(\cdot))$ solutions of \eqref{geodeq1}, and moreover $p(0)$ is a minimizer of the functional
$$
\hat{J}_2(p_0)=\frac{1}{2}\Vert v_{q_0,p_0}\Vert_V^2+g(q(1))=\frac{1}{2}p_0^T\pi_{q_0}K_{q_0}\pi_{q_0}p_0+g(q(1)).
$$
The geodesic equations \eqref{geodeq1} now take the form
\begin{equation*}
\begin{split}
\dot{q}(t)&=\partial_ph(q(t),p(t)) = K_{q(t)}\pi_{q(t)}p(t), \\
\dot{p}(t)&= -\partial_qh(q(t),p(t)) = -\frac{1}{2}p(t)^T\pi_{q(t)}^T(\nabla_qK_{q(t)})\pi_{q(t)}p(t) + \lambda_{q(t),p(t)}^T(\nabla_qC_{q(t)})K_{q(t)}\pi_{q(t)}p(t),
\end{split}
\end{equation*}
where $h$ is the reduced Hamiltonian (see Remark \ref{rem11}).
It follows from Proposition \ref{prop8} that $\nabla \hat{J}_2(p_0)=\pi_{q_0}^TK_{q_0}\pi_{q_0}p_0+\alpha(1)$, where $Z(\cdot)=(z(\cdot),\alpha(\cdot))$ is the solution of
\begin{equation*}
\begin{split}
\dot{z}(t) &= \partial_p(\nabla_qh(q(1-t),p(1-t))). z(t)-\partial_q(\nabla_qh(q(1-t),p(1-t))). \alpha(t) \\
\dot{\alpha}(t) &= \partial_p(\nabla_ph(q(1-t),p(1-t))). z(t)-\partial_q(\nabla_ph(q(1-t),p(1-t))). \alpha(t)
\end{split}
\end{equation*}
with $(z(0),\alpha(0))=(\nabla g(q(1)),0)$. Replacing $\nabla_qh$ and $\nabla_ph$ by their expression, we get 
\begin{equation*}
\begin{split}
\dot{z}(t) &= p(1-t)^T\pi_{q(1-t)}^T(\nabla_qK_{q(1-t)})\pi_{q(1-t)}z(t) -\lambda_{q(1-t),p(1-t)}^T(\nabla_qC_{q(1-t)})K_{q(1-t)}\pi_{q(1-t)} z(t) \\
& \quad
-\partial_q\Big(\frac{1}{2}p(1-t)^T\pi_{q(1-t)}^T(\nabla_qK_{q(1-t)})\pi_{q(1-t)}p(1-t) \\
&\qquad\qquad
 -\lambda_{q(1-t),p(1-t)}^T(\nabla_qC_{q(1-t)})K_{q(1-t)}\pi_{q(1-t)}p(1-t)\Big). \alpha(t) \\
\dot{\alpha}(t) &=K_{q(1-t)}\pi_{q(1-t)}z(t)-\partial_q(K_{q(1-t)}\pi_{q(1-t)}p(1-t)). \alpha(t))
\end{split}
\end{equation*}
In practice, the derivatives appearing in these equations can be efficiently approximated using finite differences.

This algorithm of constrained minimization through shooting has several advantages compared with the previous augmented Lagrangian method. The first is that, thanks to the geodesic reduction, the functional $\hat{J}_2$ is defined on a finite-dimensional space (at least whenever the shape space itself is finite dimensional) and hence $\nabla\hat{J}_2(p_0)$ is computed on a finite-dimensional space, whereas in the augmented Lagrangian method $\nabla_u J_A$ was computed on the infinite-dimensional space $L^2(0,1;\R^n)$.

A second advantage is that, since we are dealing with constrained geodesics, all resulting curves satisfy the constraints with a good numerical accuracy, whereas in the augmented Lagrangian method a large number of iterations was necessary for the constraints to be satisfied with an acceptable numerical accuracy. 

This substantial gain is however counterbalanced by the computation of $\lambda_{q(t),p(t)}$, which requires solving of a linear equation at every time $t\in[0,1]$ along the curve (indeed, recall that $\lambda_{q,p}=(C_qK_qC_q^*)^{-1}C_qK_qp$). The difficulty here is not just that this step is time-consuming, but rather the fact that the linear system may be ill-conditioned, which indicates that this step may require a more careful treatment.
One possible way to overcome this difficulty is to solve this system with methods inspired from quasi-Newton algorithms. This requires however a particular study that is beyond the scope of the present article (see \cite{A1} for results and algorithms).

\section{Numerical examples}\label{sec7}
\subsection{Matching with constant total volume}
In this first example we consider a very simple constraint, namely, a constant total volume. Consider $S=S^{d-1}$, the unit sphere in $\R^d$, and let $M=\mathrm{Emb}^1(S,\R^d)$ be the shape space, acted upon with order $1$ by $\mathrm{Diff}(\R^d)$.
Consider as in \cite{TY2} the RKHS $V$ of smooth vector fields given by the Gaussian kernel $K$ with positive scale $\sigma$ defined by $K(x,y)=e^{-\frac{\vert x-y\vert^2}{\sigma^2}}I_d$.
An embedding $q\in M$ of the sphere is the boundary of an open subset $U(q)$ with total volume given by $\mathrm{Vol}(U(q))=\int_S q^*\omega$, where $\omega$ is a $(d-1)$-form such that $d\omega=dx_1\wedge\dots\wedge dx_d$.
Let $q_0$ be an initial point and let $q_1$ be a target such that $\mathrm{Vol}(U(q_0))=\mathrm{Vol}(U(q_1))$. 
We impose as a constraint to the deformation $q(\cdot)$ to be of constant total volume, that is, $\mathrm{Vol}(U(q(t)))=\mathrm{Vol}(U(q_0))$. The data attachment function is defined by $g(q)=d(q,q_1)^2$, with $d$ a distance between submanifolds (see \cite{GV} for examples of such distances).

For the numerical implementation, we take $d=2$ (thus $S=S^1$) and $M$ is a space of curves, which is discretized as landmarks $q=(x_1,\dots,x_n)\in\mathrm{Lmk}_2(n)$. The volume of a curve is approximately equal to the volume of the polygon $P(q)$ with vertices $x_i$, given by $\mathrm{Vol}(P(q))=\frac{1}{2}(x_1y_2-y_2x_1+\dots+x_ny_1-y_nx_1)$.

If one does not take into account a constant volume constraint, a minimizing flow matching a circle on a translated circle usually tends to shrink it along the way (see Figure \ref{fig:compa_a}). If the volume is required to remain constant then the circle looks more like it were translated towards the target, though the diffeomorphism itself does not look like a translation (see Figure \ref{fig:compa_b}).

\begin{figure}[h]
\begin{center}
\subfigure[Matching trajectories without constraints. \label{fig:compa_a}]
{\includegraphics[width=7.43cm]{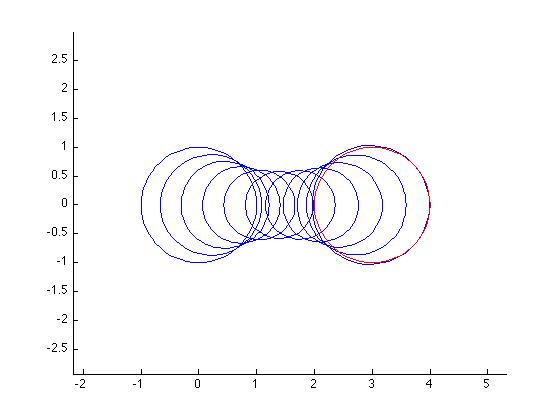}}
%\qquad\qquad\qquad
\subfigure[Matching trajectories with constant volume.]
{\label{fig:compa_b}\includegraphics[width=7.43cm]{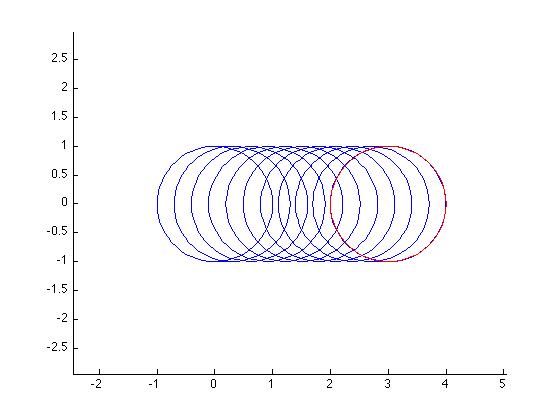}}
\caption{Matching trajectories.\label{fig:compa}}
\end{center}
\end{figure}

%\begin{figure}[h]
%\begin{center}
%\includegraphics[width=7.43cm]{sanscont.jpg}
%\includegraphics[width=7.43cm]{avcont.jpg}
%\caption{Matching trajectories without constraints (on the left) and with constant volume (on the right).}
%\label{fig:compa}
%\end{center}
%\end{figure}

The implementation of the shooting method developed in Section \ref{matchconst} leads to the diffeomorphism represented on Figure \ref{fig:CV}.
 
\begin{figure}[h]
\begin{center}
\subfigure[Initial condition (in blue) and target (in red). \label{fig:CV_a}]
{\includegraphics[width=7.43cm]{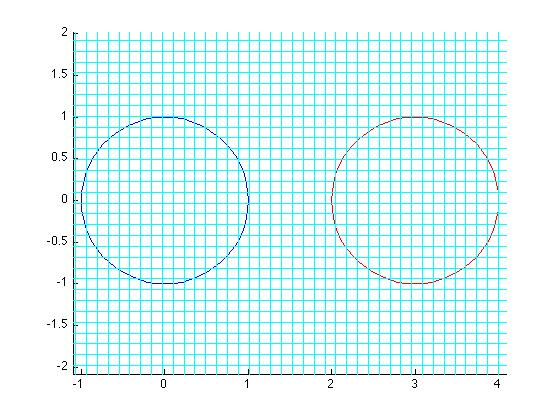}}
\subfigure[Matching. \label{fig:CV_b}]
{\includegraphics[width=7.43cm]{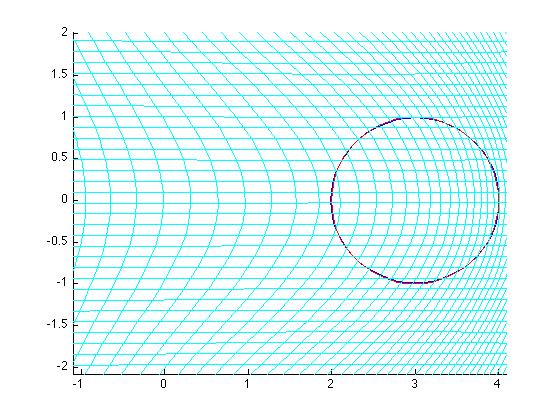}}
\caption{Constant volume experiment.}\label{fig:CV}
\end{center}
\end{figure}

\subsection{Multishape matching}
We consider the multishape problem described in Section \ref{mlmk}. We define the shape spaces $M_1, \ldots, M_k$, by $M_j = \mathrm{Emb}(S_j, \R^d)$ for every $j\in\{1,\ldots,k-1\}$ and $M_{k} = M_1 \times \cdots \times M_{k-1}$ (background space). For every $j\in \{1, \dots, k\}$ we consider a reproducing kernel $K_i$ and a reduced operator $K_{q,j}$ for every $q \in M_j$. 

In the following numerical simulations, each $q_j$ is a curve in $\R^2$, so that $S_1 = \cdots = S_k = S^1$, the unit circle. The function $g$ appearing in the functional \eqref{deffunctionalmultishapes} is defined by
$$
g(q_1, \dots, q_k) = \sum_{j=1}^{k-1} \left( d(q_j, q^{(j)})^2 + d(q_{k}^j, q^{(j)})^2 \right),
$$
where $q_{k} = (q_{k}^1, \ldots, q_{k}^{k-1})$ and $q^{(1)}, \ldots, q^{(k-1)}$ are given target curves. The distance $d$ is a distance between curves (see \cite{GV} for examples of such distances). We consider two types of compatibility constraints between homologous curves $q_j$ and $q_{k}^j$: either the identity (or stitched) constraint $q_j = q_{k}^j$, or the identity up to reparametrization (or sliding) constraint $q_{k}^j =  q_j\circ f$ for some (time-dependent) diffeomorphism $f$ of $S^1$.
Note that, since the curves have the same initial condition, the latter constraint is equivalent to imposing that $\dot q_{k}^j  - \dot q_j\circ f $ is tangent to $q_{k}^j$, which can also be written as
$$(v_{k}(t,  q_{k}^j) - v_{j}(t, q_{k}^j))\cdot \nu_{k}^j = 0,$$
where $v_j = K_j \xi^*_{q_j} u_j$ and $\nu_{k}^j$ is normal to $q_{k}^j$.
In the numerical implementation, the curves are discretized into polygonal lines, and the discretization of the control system \eqref{contsysmultishape} and of the minimization functional \eqref{deffunctionalmultishapes} is done by reduction to landmark space, as described in section \ref{approx}. The discretization of the constraint in the identity case is straightforward. For the identity up to reparametrization (or sliding) constraint, the discretization is slightly more complicated and can be done in two ways. A first way is to add a new state variable $\nu_{k}^j$ which evolves while remaining normal to $q_{k}^j$, according to
$$ \dot \nu_{k}^j = -dv_{k}^j(q_{k}^j)^T \nu_{k}^j,$$
which can be written as a function of the control $u_{k}$ and of the derivatives of $K_{k}$ (this is an example of lifted state space, as discussed in Section \ref{mlmk}).
A second way, which is computationally simpler and that we use in our experiments, avoids introducing a new state variable and uses finite-difference approximations. For every $j=1, \ldots, k-1$, and every line segment $\ell = [z_\ell^-, z_\ell^+]$ in $q_{k}^{j}$ (represented as a polygonal line), we simply use the constraint
$ \nu_\ell \cdot (v_j(\ell) - v_{k}(\ell)) = 0 $,
where $\nu_\ell$ is the unit vector perpendicular of $z_\ell^+ - z_\ell^-$ and 
$v_j(\ell) = \frac{1}{2}(v_j(z_\ell^-) + v_j(z_\ell^+))$.
Note that the vertices $z_\ell^-$ and $z_\ell^+$ are already part of the state variables that are obtained after discretizing the background boundaries $q_{k}$.

\begin{figure}[h]
\begin{center}
\includegraphics[width=9cm]{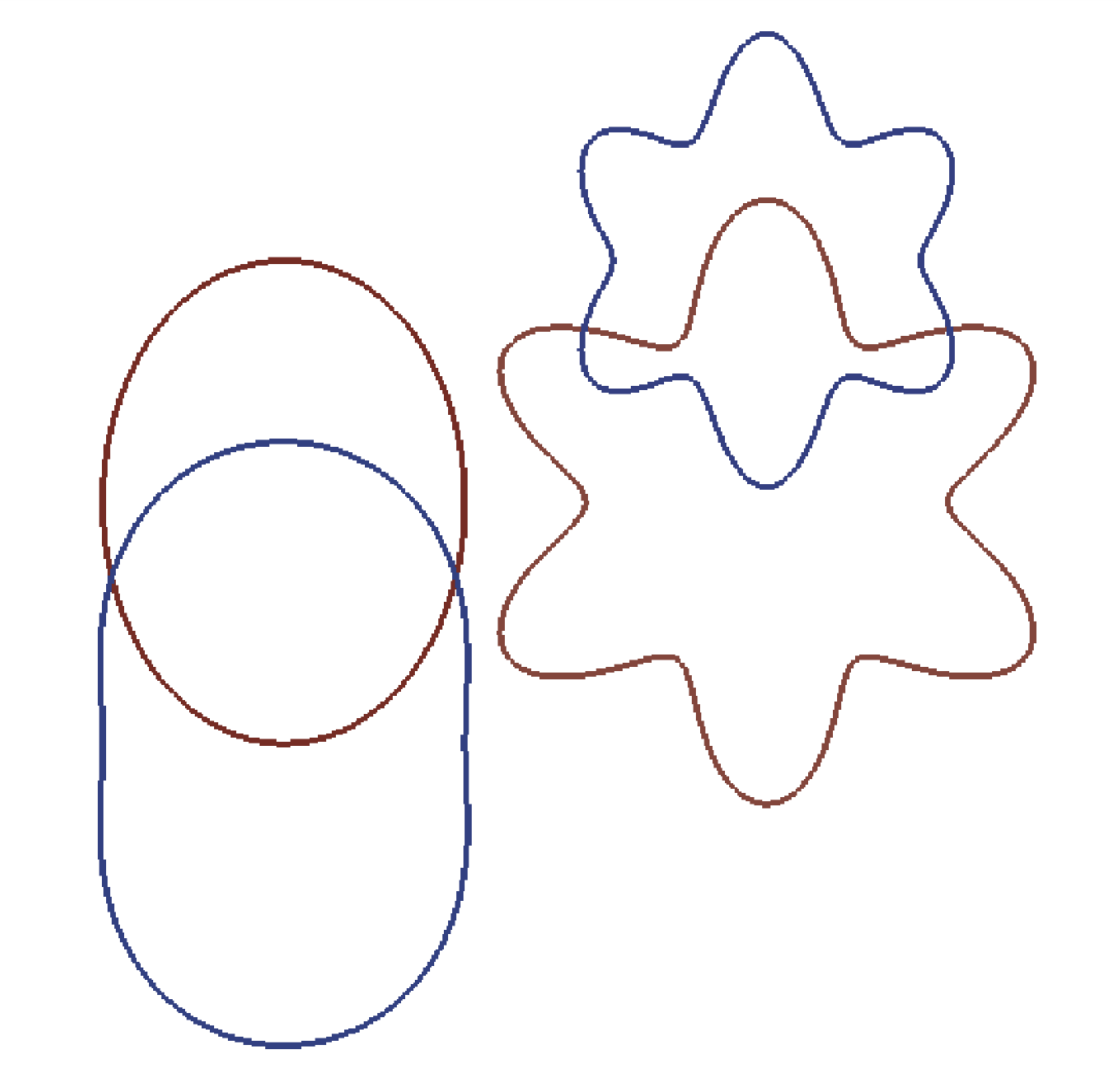}
\caption{Multishape Experiment: Initial (blue) and target (red) sets of curves.}
\label{fig:multishape.1}
\end{center} 
\end{figure}

With these choices, the discretized functional and its associated gradient for the augmented Lagrangian method are obtained with a rather straightforward -- albeit lengthy -- computation. In Figure \ref{fig:multishape.1}, we provide an example comparing the two constraints. In this example, we take $k=2$ and use the same radial kernel $K_1 = K_2$ for the two shapes, letting $K_1(x,y) = \gamma(|x-y|/\sigma_1)$, with
\[
\gamma(t) = (1 + t + 2t^2/5 + t^3/15)e^{-t}.
\]
The background kernel is  $K_3(x,y) = \gamma(|x-y|/\sigma_3)$, with $\sigma_1 = 1$ and $\sigma_3=0.1$.  The desired transformation, as depicted in Figure \ref{fig:multishape.1}, moves a curve with elliptical shape upwards, and a flower-shaped curve downwards, each curve being, in addition, subject to a small deformation. The compared curves have a diameter of order 1.

The solutions obtained using  the stitched and sliding constraints are provided in Figures \ref{fig:multishape.2_a} and \ref{fig:multishape.2_b}, in which we have also drawn a deformed grid representing the diffeomorphisms induced by the vector fields $v_1$, $v_2$ and $v_3$ in their relevant regions. The consequence of the difference between the kernel widths inside and outside the curves on the regularity of the deformation is obvious in both experiments. One can also note differences in the deformation inside between the stitched and sliding cases, the second case being more regular thanks to the relaxed continuity constraints at the boundaries. Finally, we mention the fact that the numerical method that we illustrate here can be easily generalized to triangulated  surfaces instead of polygonal lines.

\begin{figure}[H]
\begin{center}
\subfigure[Stitched constraints. \label{fig:multishape.2_a}]
{\includegraphics[width=7.cm]{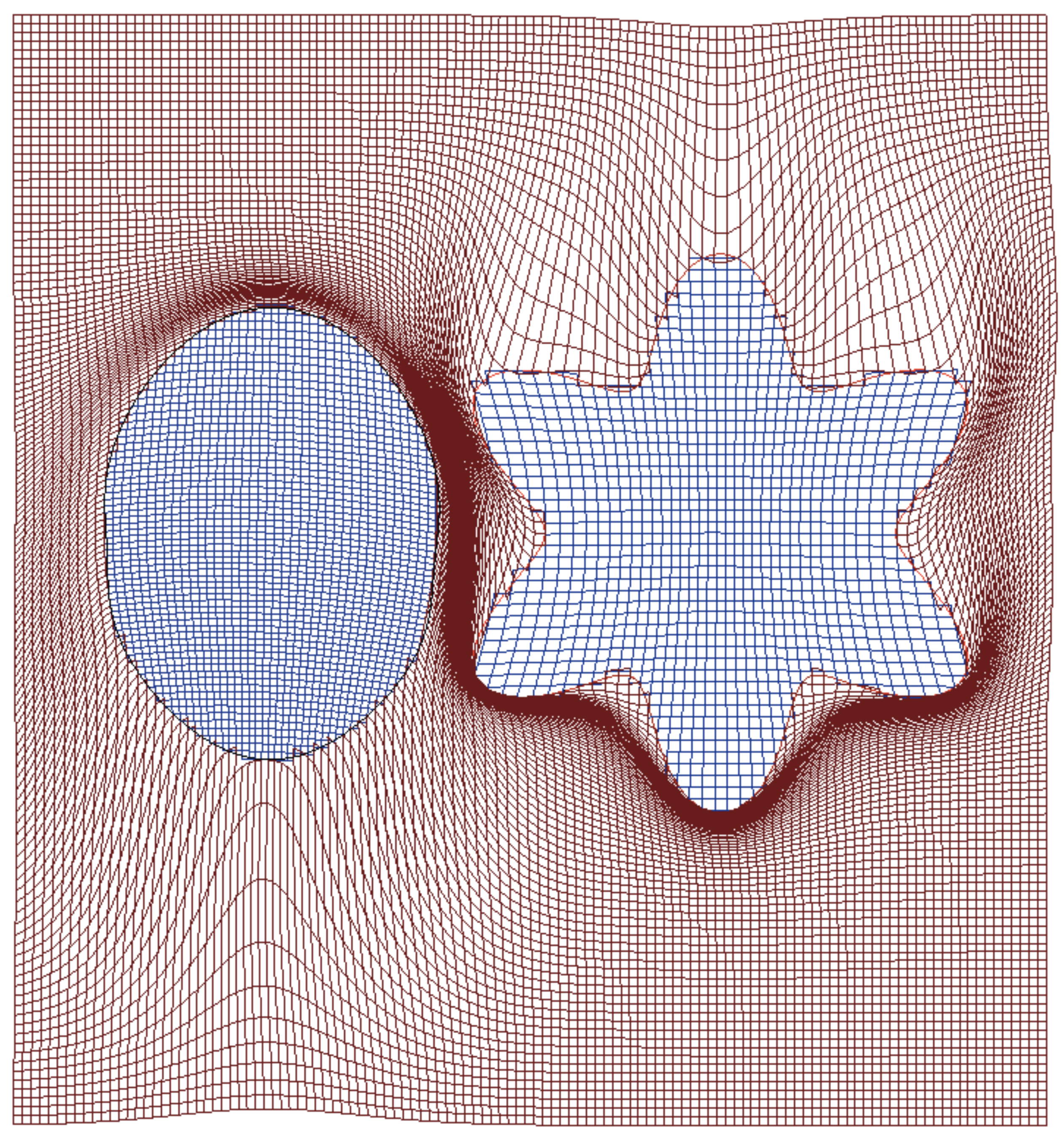}}
\subfigure[Sliding constraints. \label{fig:multishape.2_b}]
{\includegraphics[width=7.cm]{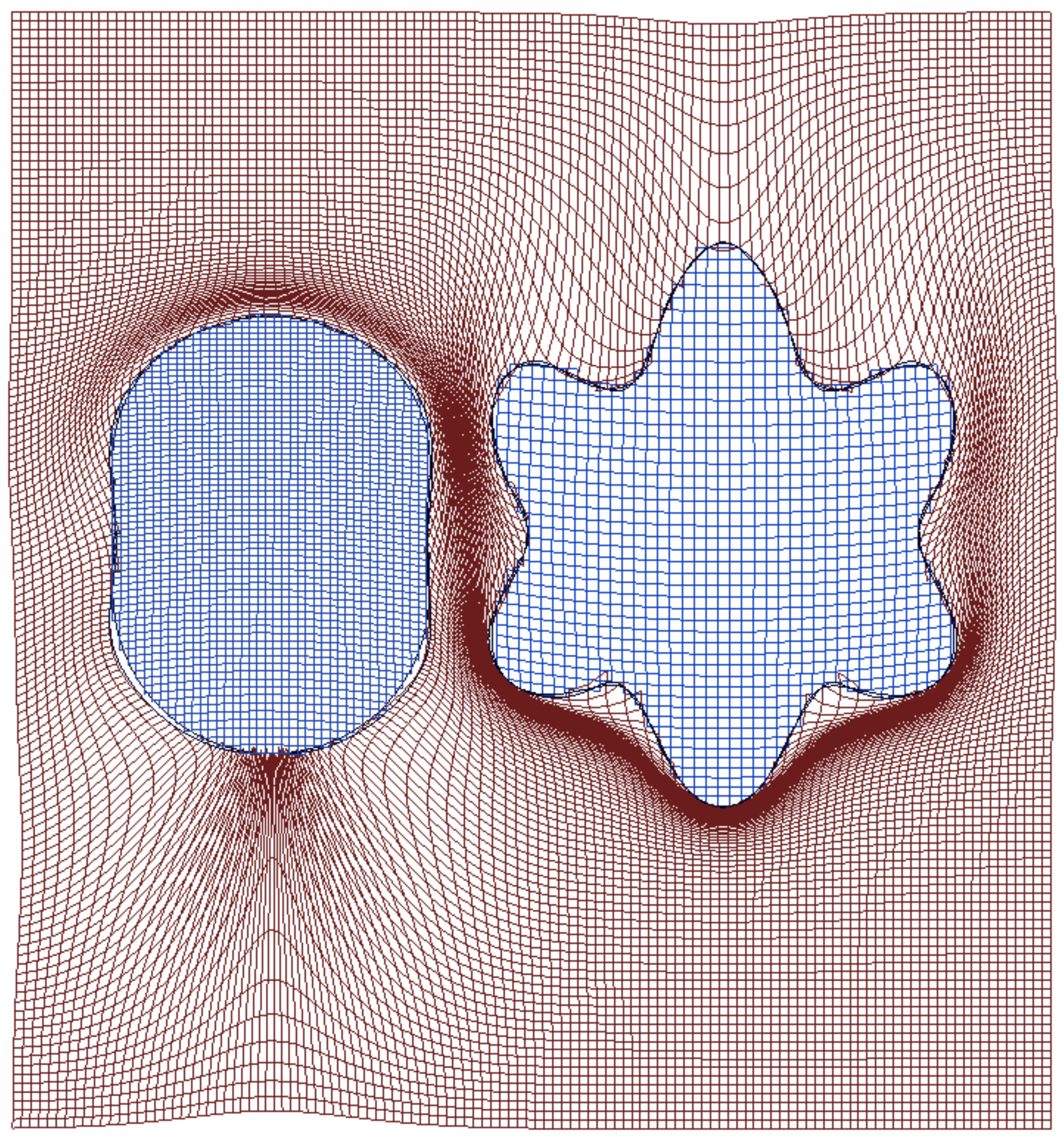}}
\caption{Multishape Experiment.}\label{fig:multishape.2}
\end{center}
\end{figure}

\section{Conclusion and open problems} 
The purpose of this paper was to develop a very general framework for the analysis of shape deformations, along with practical methods to find an optimal deformation in that framework. The point of view of control theory gives powerful tools to attain this goal. This allows in particular for the treatment of constrained deformation, which had not been done before.

Now that a concrete setting has been fixed, many new developments can be expected. First of all, the minimization algorithms in the case of constrained shapes are quite slow for a very high number of constraints. Moreover, we did not study any of the geometric aspects of shape deformation spaces. For example, we only briefly mentioned the infinite-dimensional sub-Riemannian structure that the RKHS induces on both groups of diffeomorphism and shape spaces. Sub-Riemannian geometry in infinite dimension and codimension is still a very open subject with very few results. The sub-Riemannian geometry in this paper is particularly difficult and interesting to study because the horizontal spaces may not be closed in the ambient space. A key difference with finite dimension is that some geodesics might exist that are neither normal nor abnormal.

More general control problems can be designed. One can, for example, use a second-order approach, with a control system taking the form  
$$
\dot{q}=\partial_{p}H(q,p),\quad \dot{p}=-\partial_{q}H(q,p) + f(q,u)
$$
in which the original state is lifted to the cotangent space ($q\to (q,p)$), and the new control is $u$. These models have been introduced for shapes in \cite{TV}, with $f(q,u)=u$, providing a way to interpolate smoothly between multiple shapes. We are currently exploring applications of this approach to model muscle-like motions, with external forces constrained to being collinear to the fibers. 
%Another topic left relatively untouched is that of lifted shapes. Here is an example of application for such a space, inspired by \cite{TV}. In $\mathrm{Lmk}_d(n)$ and for an RKHS $V$ with kernel $K$, it is known that the geodesic equations for $(q,p)=(x_1,\dots,x_n,p_1,\dots,p_n)$ are given by
%$$
%\dot{x}_i=\partial_{p_i}H(q,p),\quad \dot{p}_i=-\partial_{x_i}H(q,p).
%$$
%The term $-\partial_{x_i}H(q,p)$ can be interpreted as the internal forces of the system. Now assume that we are modeling a muscle. Then the contraction of the muscle adds to these internal forces a new force  $F_i=f_iL_i,\ f_I\in\R,\ L_i\in\R^d,$ in the direction of the fiber $L_i$ of the muscle at $x_i$, transforming the second equation to $-\partial_{x_i}H(q,p)+f_iL_i$. However, since the muscle is deformed by the flow of diffeomorphisms induced by the vector field 
%$$
%v(x)=(K_V\xi_q^*p)(x)=\sum_{i=1}^nK(x_i,x)p_i,
%$$
%it follows that the fibers are moved by its derivative
%$$
%\dot{L}_i=dv_{x_i}\dot L_i=\sum_{i=1}^n(\partial_2K(x_i,x)\dot L_i)p_i.
%$$
%Here, the equivariant submersion is given by $P(q,L)=q$, leading to a new control system that models the deformation of muscle when it contracts, which is written as
%$$
%\dot{x}_i=\partial_{p_i}H(q,p),\quad \dot{p}_i=-\partial_{x_i}H(q,p)+f_iL_i,\quad
%\dot{L}_i=\sum_{i=1}^n(\partial_2K(x_i,x)\dot L_i)p_i .
%$$
%}

Another glaring issue comes from the assumption of the surjectivity of the constraints in Theorem \ref{PMP}. Indeed, in most practical cases, such as multishapes in Section \ref{mlmk}, this assumption fails. In finite dimension, problems can occur when the rank of $C_q$ changes with $q$. They are usually solved by taking higher-order derivatives of $C_q$ on the sets on which it is not maximal. This does not seem easily possible with Banach spaces. Another problem comes from the incompatibility of topologies between the Hilbert space $V$ and the Banach space $Y$ in which the constraints are valued: $C_q$ may not have a closed range, in which case we could find "missing" Lagrange multipliers. If $C_q(V)$ were constant, this could be solved simply by restricting $Y$ to $C_q(V)$ equipped with the Hilbert topology induced by $C_q$ and $V$, but since it is not constant, this might be impossible. It would be very interesting, both for control theory in general and for shape deformation analysis in particular, to find a way to address this problem.


\begin{thebibliography}{99}

\bibitem{AgrachevSachkov}
A. Agrachev, Y. Sachkov,
\textit{Control theory from the geometric viewpoint},
Encyclopaedia Math. Sciences {\bf 87}, Springer-Verlag, 2004.

\bibitem{A1}
S. Arguill\`ere,
\textit{Approximating sequences of symmetric matrices with a generalized quasi-Newton algorithm},
Preprint Hal (2013).

\bibitem{Arnold}
V. Arnold,
\textit{Sur la g\'eom\'etrie diff\'erentielle des groupes de Lie de dimension infinie et ses applications \`a l'hydrodynamique des fluides parfaits},
Ann. Inst. Fourier {\bf 16} (1966), fasc. 1, 319--361.

\bibitem{ABOOK}
V. Arnold,
\textit{Mathematical methods of classical mechanics},
Graduate Texts in Mathematics {\bf 60},
Springer-Verlag, New York, 1989.

\bibitem{RK}
N. Aronszajn,
\textit{Theory of reproducing kernels},
Trans. Amer. Math. Soc. {\bf 68} (1950), 337--404.
     
\bibitem{AG}
B. Avants, J.C. Gee,
\textit{Geodesic estimation for large deformation anatomical shape averaging and interpolation},
Neuroimage {\bf 23} (2004), S139--S150.

\bibitem{BMTY}
M.F. Beg, M.I. Miller, A. Trouv\'e, L. Younes,
\textit{Computing large deformation metric mappings via geodesic flows of diffeomorphisms},
Int. J. Comput. Vis. {\bf 61} (2005), no. 2, 139--157.

\bibitem{SRBOOK}
\textit{Sub-Riemannian geometry},
Progress in Mathematics {\bf 144},
A. Bella\"{\i}che and J.-J. Risler eds, Birkh\"auser Verlag, Basel, 1996.

\bibitem{BCT_COCV2007}
B. Bonnard, J.-B. Caillau, E. Tr\'elat,
\textit{Second order optimality conditions in the smooth case and applications in optimal control},
ESAIM Control Optim.\ Calc.\ Var.\ {\bf 13} (2007), no. 2, 207--236.

\bibitem{BFT}
B. Bonnard, L. Faubourg, E. Tr\'elat,
\textit{M\'ecanique c\'eleste et contr\^ole des v\'ehicules spatiaux (French) [Celestial mechanics and the control of space vehicles]},
Math. \& Appl. (Berlin), 51, Springer-Verlag, Berlin, 2006.

\bibitem{BBOOK}
H. Br\'ezis,
\textit{Functional analysis, Sobolev spaces and partial differential equations},
Universitext, Springer, New York, 2011.

%\bibitem{DPMP}
%A.V. Dmitruk,
%\textit{Maximum principle for the general optimal control problem with phase and regular mixed constraints},
%Comput. Math. Model. {\bf 4} (1993), no. 4, 364--377.
     
\bibitem{DGM}
P. Dupuis, U. Grenander, M.I. Miller,
\textit{Variational problems on flows of diffeomorphisms for image matching},
Quart. Appl. Math. {\bf 56} (1998), no. 3, 587--600.

\bibitem{GTY1}
J. Glaun\`es, A. Trouv\'e, L. Younes,
\textit{Diffeomorphic matching of distributions: a new approach for unlabelled point-sets and sub-manifolds matching},
in CVPR (2004), 712--718, Los Alamitos, IEEE Comput. Soc.

\bibitem{GTY2}
J. Glaun\`es, A. Trouv\'e, L. Younes,
\textit{Modeling planar shape variation via Hamiltonian flows of curves},
in: Statistics and analysis of shapes,
Model. Simul. Sci. Eng. Technol.,
335--361, Birkh\"auser Boston, 2006.

\bibitem{GM}
U. Grenander, M.I. Miller,
\textit{Computational anatomy: an emerging discipline},
Current and future challenges in the applications of mathematics (Providence, RI, 1997),
Quart. Appl. Math. {\bf 56} (1998), no. 4, 617--694.

\bibitem{HSV}
R.F. Hartl, S.P. Sethi, R.G. Vickson,
\textit{A survey of the maximum principles for optimal control problems with state constraints},
SIAM Rev. {\bf 37} (1995), no. 2, 181--218.

\bibitem{HMR}
D.D. Holm, J. Marsden, T.S. Ratiu,
\textit{Euler-Poincar\'e models of ideal fluids with nonlinear dispersion},
Phys. Rev. Lett. {\bf 80} (1998), no. 19, 4173--4176.

\bibitem{IKBOOK}
K. Ito, K. Kunisch,
\textit{Lagrange multiplier approach to variational problems and applications},
Advances in Design and Control, Vol. 15,
SIAM, Philadelphia, PA, 2008.

\bibitem{JM}
S.C. Joshi, M.I. Miller,
\textit{Landmark matching via large deformation diffeomorphisms},
IEEE Transcript Image Processing {\bf 9} (2000), no. 8, 1357--1370.

\bibitem{Kurcyusz}
S. Kurcyusz,
\textit{On the existence and nonexistence of Lagrange multipliers in Banach spaces},
J. Optim. Theory Appl. {\bf 20} (1976), no. 1, 81--110.

\bibitem{MRT}
J.E. Marsden, T.S. Ratiu,
\textit{Introduction to mechanics and symmetry},
Texts in Applied Mathematics {\bf 17},
second edition, Springer-Verlag, New York, 1999.
      
\bibitem{MTY1}
M.I. Miller, A. Trouv\'e, L. Younes,
\textit{On the metrics and Euler-Lagrange equations of computational anatomy},
Annu. Rev. Biomed. Eng. {\bf 4} (2002), 375--405.

\bibitem{MTY2}
M.I. Miller, A. Trouv\'e, L. Younes,
\textit{Geodesic shooting for computational anatomy},
J. Math. Imaging Vision {\bf 24} (2006), no. 2, 209--228.
     
\bibitem{MBOOK}
R. Montgomery,
\textit{A tour of subriemannian geometries, their geodesics and applications},
Mathematical Surveys and Monographs {\bf 91},
American Mathematical Society, Providence, RI, 2002.

\bibitem{OBOOK}
H. Omori,
\textit{Infinite dimensional {L}ie transformation groups},
Lecture Notes in Mathematics, Vol. 427, Springer-Verlag, Berlin-New York, 1974.

\bibitem{PBOOK}
L.S. Pontryagin, V.G. Boltyanskii, R.V. Gamkrelidze, E.F. Mishchenko,
\textit{The mathematical theory of optimal processes},
A Pergamon Press Book, The Macmillan Co., New York, 1964.

\bibitem{TBOOK}
E. Tr\'elat,
\textit{Contr\^ole optimal (French) [Optimal control], Th\'eorie \& applications [Theory and applications]},
Math. Concr\`etes [Concrete Mathematics], Vuibert, Paris, 2005.

\bibitem{T1}
A. Trouv\'e,
\textit{Action de groupe de dimension infinie et reconnaissance de formes},
C. R. Acad. Sci. Paris S\'er. I Math. {\bf 321} (1995), no. 8, 1031--1034.

\bibitem{T2}
A. Trouv\'e,
\textit{Diffeomorphism groups and pattern matching in image analysis},
International Journal of Computational Vision {\bf 37} (2005), no. 1, 17 pages.

\bibitem{TV}
A. Trouv\'e, F.-X. Vialard,
\textit{Shape splines and stochastic shape evolutions: a second order point of view},
Quart. Appl. Math. {\bf 70} (2012), no. 2, pp.~219--251.

\bibitem{TY1}
A. Trouv\'e, L. Younes,
\textit{Local geometry of deformable templates},
SIAM J. Math. Anal. {\bf 37} (2005), no. 1, pp.~17--59.

\bibitem{TY2}
A. Trouv\'e, L. Younes,
\textit{Shape spaces},
in: Handbook of Mathematical Methods in Imaging, 
O. Scherzer ed., Springer New York, 2011, 1309--1362.

\bibitem{GV}
M. Vaillant, J. Glaun\`es,
\textit{Surface Matching via Currents},
in: Information Processing in Medical Imaging, 1--5,
Springer Berlin, Heidelberg, 2005, G. Christensen and M. Sonka (eds), 
Lecture Notes in Computer Science, Vol. 3565.

\bibitem{YBOOK}
L. Younes,
\textit{Shapes and diffeomorphisms},
Applied Mathematical Sciences {\bf 171},
Springer-Verlag, Berlin, 2010.

\bibitem{Y1}
L. Younes,
\textit{Constrained diffeomorphic shape evolution},
Found. Comput. Math. {\bf 12} (2012), no. 3, 295--325.



\end{thebibliography}
\end{document}